\documentclass[11pt]{article}
\usepackage[a4paper,margin=1in]{geometry}
\usepackage{amsmath,amssymb,amsthm,mathtools}
\usepackage{bm}

\usepackage{xcolor}
\usepackage{subcaption}
\usepackage{algorithm}
\usepackage{algpseudocode}
\usepackage{comment}

\usepackage{graphicx}
\usepackage{enumitem}

\usepackage[capitalise]{cleveref}
\usepackage{aliascnt}

\newtheorem{theorem}{Theorem}
\newaliascnt{proposition}{theorem}
\newtheorem{proposition}[proposition]{Proposition}
\aliascntresetthe{proposition}
\newaliascnt{remark}{theorem}
\newtheorem{remark}[remark]{Remark}
\aliascntresetthe{remark}
\newaliascnt{lemma}{theorem}
\newtheorem{lemma}[lemma]{Lemma}
\aliascntresetthe{lemma}
\newtheorem{assumption}{Assumption}
\newtheorem{corollary}[theorem]{Corollary}

\crefname{proposition}{Proposition}{Propositions}
\crefname{remark}{Remark}{Remarks}
\crefname{lemma}{Lemma}{Lemmas}
\crefname{assumption}{Assumption}{Assumptions}

\newcommand{\R}{\mathbb{R}}

\newcommand{\Hcal}{\mathcal{H}}
\newcommand{\Acal}{\mathcal{A}}
\newcommand{\supp}{\operatorname{supp}}

\newcommand{\ri}{\operatorname{ri}}

\newcommand{\MT}{\operatorname{MT}}
\newcommand{\argmin}{\operatorname{argmin}}

\newcommand{\dd}{\,\mathrm{d}}
\newcommand{\Id}{\mathrm{Id}}

\newcommand{\eps}{\varepsilon}

\newcommand{\ph}{\varphi}

\title{On the Linear Convergence of SISTA for Entropic \\
Martingale Optimal Transport}
\author{Giorgia Bifronte, Armand Ley, Maxime Sylvestre}
\date{}

\begin{document}
\maketitle

\begin{abstract}
We prove a linear convergence rate of a Sinkhorn-type algorithm for entropic martingale optimal transport in arbitrary dimension. The algorithm follows a block-coordinate ascent scheme, alternating exact Sinkhorn updates for the marginal potentials with an explicit gradient step for the potential associated with the martingale constraint. Under irreducibility and non-degeneracy assumptions on the compactly supported marginals, we establish linear convergence of the dual objective values and iterates. Our analysis relies on a pointwise coercivity estimate involving the gradient of the dual functional with respect to the potential for the martingale constraint. This property allows us to establish uniform $L^{\infty}$ bounds on the iterates. Combining these bounds with smoothness and strong convexity modulo affine invariances yields a uniform Polyak--{\L}ojasiewicz inequality along the iterations and, consequently, a linear convergence rate. We complement our theoretical results with numerical experiments illustrating the convergence behavior of the algorithm. 
\end{abstract}

\section{Introduction}

In recent years, martingale optimal transport has attracted considerable attention, partly due to its applications in mathematical finance \cite{Ho98,BeHePe13,GaHeTo14,DoSo14,Ho11,He17} and, more recently, generative modelling \cite{AlBaCaPh25,AlHeLoMaPhTo26,DePhZa26}. In his seminal article, Strassen \cite{Stra65} (see also \cite{CaFeMe64}) 
proved that there exists a martingale coupling between $\mu$ and $\nu$ if and only if $\mu$ is smaller than $\nu$ in the convex order. Since its proof is non-constructive and martingale couplings are generally not unique, a substantial literature has sought principles to exhibit distinguished martingale transports, for instance  through solutions of the Skorokhod embedding problem \cite{Ob04,Ho11,BeCoHu17} or, most often, as solutions of variational problems. Such distinguished couplings can be obtained by solving static linear \cite{BeJu16,HeTo16,BeHeTo17,GhKiLi19} or nonlinear transport problems \cite{GoRoSaTe17,BaBePa19,BeJu21,BaBeHuKa20,BaBeSchaTschi26,BaLoOb25,NuWi24,BaBeBiLe26}, or by considering the endpoint laws of martingales optimizing dynamic variational problems with prescribed initial and terminal distributions \cite{BaBeHuKa20,HuTr17,BaBeBiLe26}. This raises the computational question of how to construct martingale transport plans efficiently.

We briefly recall the main computational approaches to martingale optimal transport. If both marginals are finitely supported, the martingale optimal transport problem can be formulated as a linear program and solved using standard linear programming methods \cite{GuOb19,AlCoJo19,EcGuLiOb21}. PDE methods solve the dual formulation of continuous-time martingale transport, which involves a fully nonlinear Hamilton--Jacobi--Bellman equation, either with prescribed marginals \cite{BoTa13,TaTo13} or with finitely many expectation constraints
\cite{GuLo21,GuLoWa22}. Dedicated algorithms have also been developed for specific martingale transport problems, notably the Bass martingale problem \cite{CoHe21,AcMaPa25,HJLOP,BaPaSc25}.
Another standard approach, which we follow in this paper, is to add entropic regularization enabling Sinkhorn-type algorithms: we discuss this approach and review the related literature in more detail below.

\paragraph{Entropic Martingale Optimal Transport}

We consider the entropically regularized martingale optimal transport problem with cost function $c$ and regularization parameter $\varepsilon>0$:
\begin{align}\label{eq:introEMOT} \tag{\text{EMOT$_{\varepsilon}$}}
		\inf_{\substack{m\in \MT(\mu,\nu)}} \int c(x,y) \dd m(x,y) + \varepsilon H(m\mid \mu\otimes \nu),
	\end{align}
where $$\MT(\mu,\nu) \coloneqq \{ m \in \Pi(\mu,\nu): \int_{\R^d} y \dd m_x(y)=x \quad\mu \text{-a.e. } x \}$$ denotes the set of martingale couplings, with $\Pi(\mu,\nu)$ being the set of couplings between $\mu$ and $\nu$, and $H(m\mid \mu\otimes \nu)$ denotes the relative entropy, i.e.\ equals $\int \log\frac{\dd m}{\dd \mu \otimes \nu}\dd m$ if $m\ll \mu \otimes \nu$ and $+\infty$ otherwise.
This problem is of interest for two main reasons.

First, as $\varepsilon\to 0$, it provides an approximation
of the unregularized martingale optimal transport problem.
While this convergence is standard in the discrete setting,
it also holds for general marginals satisfying irreducibility
and non-degeneracy assumptions: under suitable regularity
assumptions on $c$, the entropically regularized functionals
$\Gamma$-converge to the unregularized functional.
Moreover, quantitative convergence rates for the optimal
values are available under additional regularity assumptions
on the cost~\cite{CaMaSy26}.

Second, entropic martingale couplings are of independent
interest. In particular, when $c=0$, the problem reduces
to minimizing relative entropy over martingale couplings.
Its optimizer is the (static) martingale Schrödinger bridge,
introduced in \cite{NuWi24} and subsequently extended and
characterized in \cite{BaBeBiLe26}.
This coupling can be interpreted as the martingale coupling
closest to independence in the sense of relative entropy.
It has found applications in the study of the football
model \cite{GuJuTa26} and in the resolution of a conjecture
of Talagrand \cite{HuSoTu26}.
Furthermore, \cite{BaBeBiLe26} establishes a connection
with a continuous-time calibration problem in mathematical
finance. Under suitable assumptions on the marginals,
the martingale Schrödinger bridge determines a
continuous-time martingale, known as the Föllmer martingale,
which solves the variational problem
\begin{align}\label{eq:introP_cont}\tag{$P^{cont}$}
		\inf_{\substack{M_0\sim\mu,\;M_1\sim\nu,\, M_t=M_0+\int_0^t \sigma_s\dd B_s}}
		\frac12\,\mathbb{E}\Big[\int_0^1 \frac{|\sigma_t-I_d|_{\text{HS}}^2}{1-t}\dd t\Big].
\end{align}

As in classical optimal transport, \labelcref{eq:introEMOT} admits a dual formulation. For a triple of potentials $(\varphi,\psi,h)$, with $\varphi,\psi\colon\R^d\to\R$ and $h\colon\R^d\to\R^d$, define $\Phi(x,y)\coloneqq\varphi(x)+\psi(y)+h(x)\cdot(y-x)$. The corresponding dual functional is given by 
\begin{equation}\label{eq:introDual}
\mathcal{D}(\varphi,\psi,h)\coloneqq\int\varphi\,\dd\mu+\int\psi\,\dd\nu-\varepsilon\int\exp\Big(\frac{\Phi(x,y)-c(x,y)}{\varepsilon}\Big)\dd\mu\otimes \nu(x,y)+\varepsilon .
\end{equation}
For $m\in\MT(\mu,\nu)$ with finite entropy and integrable potentials, writing $\rho\coloneqq\frac{\dd m}{\dd(\mu\otimes\nu)}$ and $u\coloneqq(\Phi-c)/\varepsilon$, the Fenchel--Young inequality $u\rho\le\rho\log\rho-\rho+e^{u}$ then yields
\begin{equation} \label{eq:intro_weak_duality}
    \int c\,\dd m+\varepsilon H(m\mid\mu\otimes\nu)-\mathcal D(\varphi,\psi,h)=\varepsilon\int\big(\rho\log\rho-\rho-u\rho+e^{u}\big)\,\dd(\mu\otimes\nu)\ge0,
\end{equation}
with equality if and only if $\rho=e^{u}$. 
This establishes weak duality and it shows that a dual maximizer, when it exists, determines the primal optimizer. Moreover, \labelcref{eq:introEMOT} can be written as  
$$
\inf_{m\in\Pi(\mu,\nu)}\int C(x,m_x)\dd \mu(x), 
\quad \text{with} \; \;
C(x,p)\coloneqq\int c(x,y)\dd p(y)+\varepsilon H(p\mid\nu)+\infty \cdot \mathbf{1}_{\int y\dd p(y)\neq x}.$$ 
Thus, \labelcref{eq:introEMOT} is a weak optimal transport problem in the sense of \cite{GoRoSaTe17}, with the martingale condition imposed as a hard moment constraint, as studied in \cite{CaMaSy26}. For further developments on weak transport duality, see \cite{GoRoSaTe17,GoRoSaShTe18,AlBoCh19,BaBePa19,BaPa22,BePaRiSc25}.

While strong duality holds under mild assumptions, dual attainment is more delicate in the presence of moment constraints. It was established in dimension one in \cite{NuWi24} and, for compactly supported marginals satisfying a suitable qualification condition, implied by irreducibility and non-degeneracy, in \cite{CaMaSy26}.

Whenever $\mathcal D$ admits a maximizer $(\varphi,\psi,h)$, the equality case in \cref{eq:intro_weak_duality} implies that the optimizer of \labelcref{eq:introEMOT} has the Gibbs density
\begin{equation}\label{eq:introGibbs}
	\frac{\dd m_\varepsilon}{\dd(\mu\otimes\nu)}(x,y)=\exp\Big(\frac{\varphi(x)+\psi(y)+h(x)\cdot(y-x)-c(x,y)}{\varepsilon}\Big).
\end{equation}
The first-order optimality conditions with respect to $\varphi$, $\psi$ and $h$ recover the two marginal constraints and the martingale constraint:
\[
\int e^{u(x,y)}\dd\nu( y)=1\ \ \mu\text{-a.e.},\quad 
	\int e^{u(x,y)}\dd\mu( x)=1\ \ \nu\text{-a.e.},
\]
\begin{equation}
    \label{eq:introFOC}
    \text{and}\quad
    \int(y-x)\,e^{u(x,y)}\dd\nu( y)=0\ \ \mu\text{-a.e.}
\end{equation}
In the following we minimize the convex functional $F\coloneqq\varepsilon-\mathcal D$ rather than maximize $\mathcal D$. 

\paragraph{SISTA and Main Result}

A standard approach to entropic optimal transport is Sinkhorn's algorithm \cite{Cu13}, which alternates exact minimization steps in the dual variables and converges linearly. In the martingale setting, the dual formulation involves an additional multiplier $h$ associated with the martingale constraint. Existing Sinkhorn-type algorithms typically update $h$ by exact minimization \cite{CheCoReWa26,KaReWa26}. Unlike the marginal updates, however, this step is implicit and requires solving, for each $x$, a $d$-dimensional nonlinear convex problem characterized by \cref{eq:introFOC}. To avoid this computational burden, we follow \cite{CaMaSy26} and employ the SISTA (Sinkhorn--Iterative--Shrinkage--Thresholding--Algorithm) introduced in \cite{CaDuGaSu23}, replacing the exact minimization in $h$ with an explicit gradient step.\footnote{The original SISTA employs a proximal gradient step to promote sparsity. In the absence of a sparsity penalty, as in our setting, this reduces to a standard gradient step.}

Starting from an initial triple $(\ph^0,\psi^0,h^0)$
and a step size $\tau>0$, the iterates are defined recursively by
\begin{equation*}
\begin{aligned}
\psi^{n+1}
&= \argmin_{\psi} F(\ph^n,\psi,h^n),\\
\ph^{n+1}
&= \argmin_{\ph} F(\ph,\psi^{n+1},h^n),\\
h^{n+1}
&= h^n-\tau\nabla_h F(\ph^{n+1},\psi^{n+1},h^n),
\end{aligned}
\end{equation*}
which can be rewritten in terms of explicit formulas as
\begin{equation}\label{eq:introSISTA}
\begin{aligned}
	\psi^{n+1}(y)&=-\varepsilon\log\int\exp\Big(\frac{\varphi^{n}(x)+h^{n}(x)\cdot(y-x)-c(x,y)}{\varepsilon}\Big)\dd\mu( x),\\
	\varphi^{n+1}(x)&=-\varepsilon\log\int\exp\Big(\frac{\psi^{n+1}(y)+h^{n}(x)\cdot(y-x)-c(x,y)}{\varepsilon}\Big)\dd\nu( y),\\
	h^{n+1}(x)&=h^{n}(x)-\tau\int(y-x)\dd\pi^{n}_x( y),
\end{aligned}
\end{equation}
where $$\dd \pi^{n}_x(y)\coloneqq\exp\left(\frac{\varphi^{n+1}(x)+\psi^{n+1}(y)+h^{n}(x)\cdot(y-x)-c(x,y)}{\varepsilon}\right)\dd \nu(y).$$ 

Our main result, stated in the next section, establishes linear convergence of both the objective values and the iterates of SISTA under compactness, non-degeneracy, and irreducibility assumptions on the marginals. 

\paragraph{Difficulties and Strategy of the Proof}

Block optimization methods are known to converge linearly if the underlying map $F$ is strongly convex and has Lipschitz gradients \cite{BeLu13}. However, the dual functional $F$ associated to our entropic martingale optimal transport problem introduces three difficulties: 
\begin{enumerate}
    \item $F$ is neither globally smooth nor globally strongly convex. It is finite and differentiable on $\mathcal H^\infty := L^\infty(\mu)\times L^\infty(\nu)\times L^\infty(\mu;\R^d)$, but its smoothness and convexity constants deteriorate exponentially with $\|(x,y) \mapsto [\varphi(x)+\psi(y)+h(x)\cdot(y-x)-c(x,y)]\|_{L^\infty(\mu\otimes\nu)}$;
    \item The iterates are not a priori bounded in $L^\infty(\mu)\times L^\infty(\nu)\times L^\infty(\mu;\R^d)$, so it is not clear if they remain in a region where these constants can be controlled;
    \item The function $F$ is invariant along the affine gauge $$(\varphi,\psi,h)
\longmapsto
\bigl(\varphi-a\cdot x+b,\,\psi+a\cdot y-b,\,h-a\bigr),
\qquad
a\in\R^d,\ b\in\R,$$ thus, it is not strongly convex and its minimizers are not unique.
\end{enumerate}

Our strategy of proof is organized accordingly. In \cref{subsection:abstract}, we establish an abstract convergence result for cyclic block-coordinate methods on Hilbert spaces combining exact block minimization and gradient steps. Assuming that all  partial iterates remain in a closed convex set $K$ on which $F$ is convex, has a Lipschitz continuous gradient, is strongly convex in each exactly minimized block, and satisfies a Polyak--{\L}ojasiewicz inequality, we prove that the sub-optimality gap as well as the iterates converge linearly  with explicit rates. This result is an abstract version of the arguments of \cite[Thm.~3.1]{CaDuGaSu23} and \cite[Prop.~6.5]{GaNaTo25}, and may be of independent interest.

The main challenge is to establish uniform boundedness of the iterates. To this end, we exploit a weak coercivity property of $F$ to obtain uniform bounds on suitably normalized iterates. Using the gauge invariance of $F$, we then show that all partial iterates belong to a set of the form $K=\{ \xi=(\varphi,\psi,h)\,:\,\|(x,y) \mapsto [\varphi(x)+\psi(y)+h(x)\cdot(y-x)-c(x,y)]\|_{L^\infty(\mu\otimes\nu)}\le M\}$ on which the abstract convergence theorem applies.

\paragraph{Comparison with the Literature}
We compare our results with existing Sinkhorn-type algorithms for entropic martingale optimal transport. A common feature of these methods is that the marginal potentials $\varphi$ and $\psi$ admit explicit Sinkhorn updates, whereas the martingale multiplier $h$ is updated by exact minimization. For each $x$, this requires solving the nonlinear equation
\begin{equation*}
\int (y-x)\exp\bigl(\varphi(x)+\psi(y)+h^{n+1}(x)\cdot(y-x)-c(x,y)\bigr)\dd\nu(y)=0,
\end{equation*}
equivalently, a $d$-dimensional strictly convex minimization problem. This inner problem has no closed-form solution and needs to be solved numerically. 

The first Sinkhorn-type algorithm for martingale optimal transport was introduced by De March \cite{De18} for finitely supported marginals, using Newton's method to update $h$. Under the existence of a full-support martingale coupling, linear convergence of the objective values is established. In dimension one, Chen, Conforti, Ren and Wang \cite{CheCoReWa26} prove linear convergence for compactly supported marginals, assuming the existence of a martingale coupling with bounded log-density relative to $\mu\otimes\nu$. Eckstein and Lakhal \cite{EcLa26} establish linear convergence for general iterative proportional fitting procedures, with rates expressed in terms of Friedrichs angles between constraint spaces. Their analysis requires uniform bounds on the log-densities of the iterates and, in the martingale setting, a reference martingale coupling with bounded log-density relative to $\mu\otimes\nu$. More recently, Kazeykina, Ren and Wang \cite{KaReWa26} prove exponential convergence in relative entropy in $\R^d$, without assuming the existence of such a coupling.  Their result requires compactly supported marginals in strict convex order, a cost that is Lipschitz in each variable, and a convex support of $\nu$ containing $\supp(\mu)$ in its relative interior.

To the best of our knowledge, our result is the first to establish a convergence rate for a Sinkhorn-type algorithm for entropic martingale optimal transport without requiring an exact update of the martingale multiplier at each iteration. Unlike \cite{De18}, our analysis applies to general marginals, not necessarily finitely supported. Compared with \cite{CheCoReWa26,EcLa26}, it holds in arbitrary dimension without requiring a martingale coupling with bounded log-density. Moreover, unlike \cite{KaReWa26}, we do not assume that $\supp(\nu)$ is convex. This allows, in particular, finitely supported terminal marginals, such as the one illustrated in \cref{fig:2d_follmer}, which are especially relevant for numerical implementations. Under non-degeneracy, the strict convex order condition imposed in \cite{KaReWa26} is equivalent to our irreducibility assumption (see the Appendix).

Algorithms combining exact Sinkhorn updates with explicit gradient steps have also been studied in other settings. Gallouët, Natale and Todeschi \cite[Prop.~6.5]{GaNaTo25} establish linear convergence for entropically regularized barycentric costs in Wasserstein metric extrapolation with finitely supported marginals. Kato and Wang \cite{KaWa26} recently prove convergence of a similar scheme for entropic vector quantile regression with general compactly supported marginals.

Finally, the martingale Schrödinger bridge determines a canonical continuous-time martingale, the Föllmer martingale \cite{BaBeBiLe26}, which is the entropic counterpart of the Bass martingale \cite{BaBeHuKa20,BaBeSchaTschi26}. Both admit explicit representations in terms of their potentials and can be simulated using Gaussian convolutions. The main computational difference lies in the determination of these potentials. In dimension one, the Bass potential can be computed efficiently through the fixed-point iteration of \cite{CoHe21}, whose convergence is established in \cite{AcMaPa25,HJLOP}. In higher dimensions, however, each iteration of the martingale Sinkhorn algorithm \cite{HJLOP} involves solving an optimal transport problem, performing a Gaussian convolution and computing a Legendre transform, whereas SISTA requires only explicit updates.

\paragraph{Numerical Experiments}

After the theoretical analysis, we provide some numerics to illustrate convergence behavior. In our first example, we compute the solution of \labelcref{eq:emot-setting} with cost $c(x,y) = |y-x|$ and regularization parameter $\varepsilon$ going to $0^+$. We then compute the martingale Schrödinger bridge ($c=0$) for two-dimensional marginals and apply it to compute the associated Föllmer martingale (see \Cref{fig:2d_follmer}). Finally, we numerically illustrate the role of the irreducibility and non-degeneracy assumptions: loss of irreducibility is accompanied by loss of multiplier boundedness, while approaching a degenerate regime leads to a progressive deterioration of the observed convergence rate.
\begin{figure}[H]
    \centering
    
    \includegraphics[width=\textwidth]{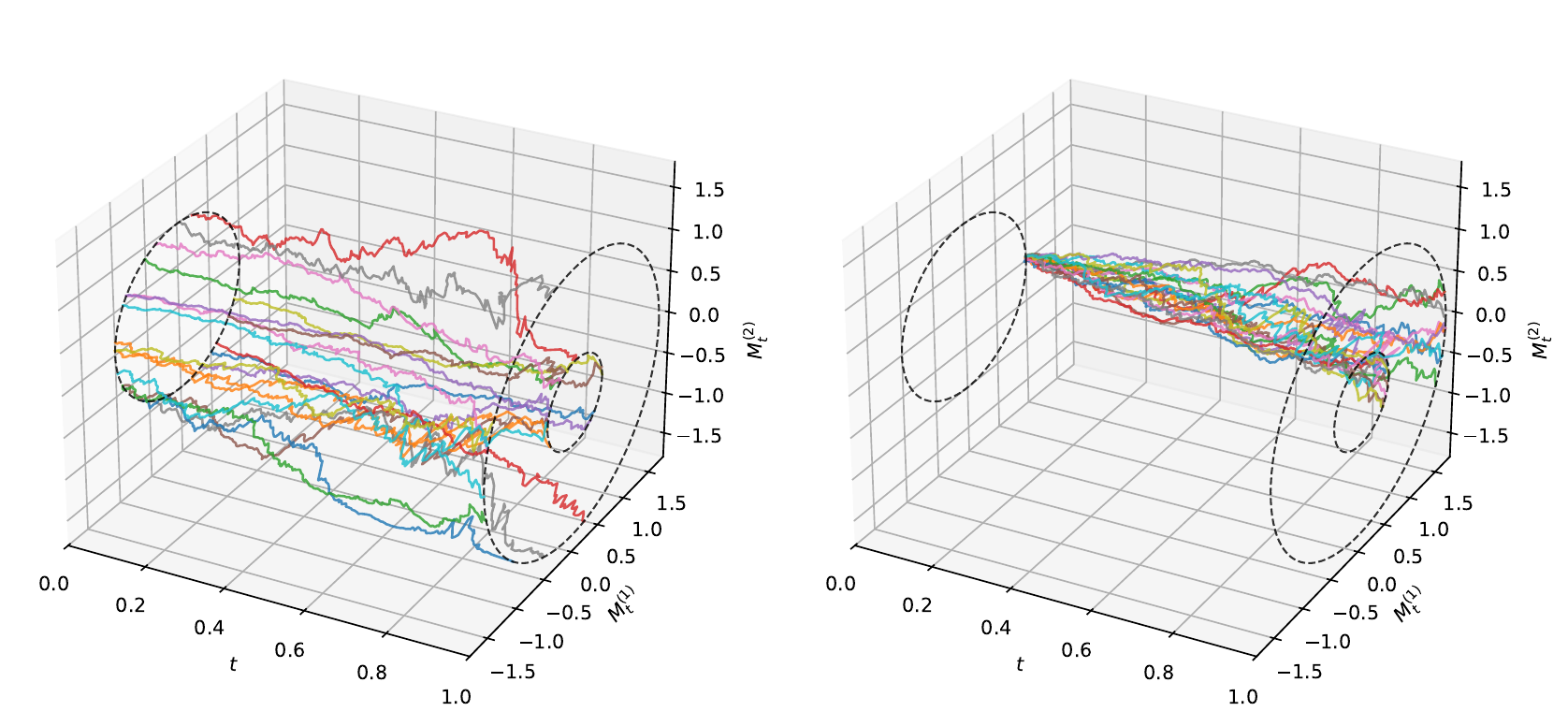}

   \caption{
    Sample trajectories of the Föllmer martingale between $
\mu=\textnormal{Unif}(\mathbb S^1)$ and $
\nu=
\frac12\textnormal{Unif}\!\left(\frac12\mathbb S^1\right)
+
\frac12\textnormal{Unif}\!\left(\frac85\mathbb S^1\right)$ using SISTA.
    }
    \label{fig:2d_follmer}
\end{figure}

\paragraph{Outline}

 \Cref{sec:setting} introduces the entropic martingale optimal transport problem, its dual functional and the standing assumptions of irreducibility and non-degeneracy. 
\Cref{sec:properties} collects the analytic properties of the dual functional that are needed for the convergence analysis. In particular, we establish gradient estimates, as well as strong convexity and smoothness properties. 
\Cref{sec:convergence} is devoted to the convergence analysis of the SISTA scheme. We first show that a weak coercivity property of the dual functional yields uniform boundedness of the iterates, which constitutes the main step of the argument. Once boundedness is established, we can apply standard arguments based on smoothness and strong convexity to derive the linear convergence of the algorithm. 
\Cref{sec:numerics} presents numerical experiments in one and two dimensions and investigates the behavior of the algorithm when irreducibility or non-degeneracy is violated.

\section{Setting and Main Result}\label{sec:setting}

\subsection{Definition of the Problem and Assumptions}
Throughout the theoretical analysis, we consider $\mu,\nu\in\mathcal P(\mathbb R^d)$ in the convex order (i.e. $\mu \leq_c \nu$), which means that for any convex function $u$ finite over the closed convex hull of the support of $\nu$ we have: $\int u \dd \mu \leq \int u \dd \nu$. By \cite{Stra65}, this property ensures the existence of a martingale coupling between $\mu$ and $\nu$.
Let $c\in L^\infty(\mu\otimes\nu)$ be a cost function. Dividing the objective of \labelcref{eq:introEMOT} by $\varepsilon>0$ and replacing $c$ by $c/\varepsilon$, we may assume without loss of generality that the regularization parameter equals one. We thus consider the problem
\begin{equation}\label{eq:emot-setting}
    \inf_{\substack{\pi\in \MT(\mu,\nu)}}  \left\{ \int c(x,y) \dd\pi(x,y)+H(\pi \mid \mu \otimes \nu) \right\}. \tag{EMOT}
\end{equation}
We write $X:=\operatorname{supp}(\mu)$,
$Y:=\operatorname{supp}(\nu)$, and
$\operatorname{conv}(Y)$ for the closed convex hull of $Y$.
In order to ensure the existence of dual potentials which are in $L^\infty(\mu)\times L^\infty(\nu)\times L^\infty(\mu;\R^d)$ we make the following assumptions:
\begin{assumption}
\label{ass:qualification}
\hfill
    \begin{itemize} 
        \item \textbf{Non-degeneracy}: $X,Y$ are compact sets such that $X \subseteq \mathrm{int}(\mathrm{conv}(Y))\neq \emptyset$,
        \item \textbf{Irreducibility}:  $ \exists \pi \in \MT(\mu,\nu)$ such that $\int y\dd \pi_x(y) = x$ and $\pi_x \sim  \nu$ for $\mu-$a.e. $x$,
        \item \textbf{Bounded cost}: $c\in L^\infty(\mu \otimes \nu)$.
    \end{itemize}
\end{assumption}
\begin{remark}
It has been shown in \cite[Theorem 2.9.]{CaMaSy26} that \cref{ass:qualification} is equivalent to the following $W_\infty$ stability property of the pair $(\mu,\nu)$: $\exists \eta > 0$ such that
    \begin{equation*}
        \forall \bar\mu \in \mathcal{P}(\R^d) \; \text{ with } \int x \dd \bar \mu(x) = \int x \dd \mu(x), \qquad  W_\infty(\bar\mu,\mu) \leq \eta \implies \bar\mu\leq_c \nu,
    \end{equation*}
    where $W_\infty$ denotes the Wasserstein-$\infty$ distance (see e.g. \cite{ChaDePe08}). 
    The assumptions of \cite{KaReWa26} differ mainly in two respects: irreducibility is replaced by strict convex order, and $Y$ is assumed to be convex. Recall that strict convex order requires: 
    \begin{equation*}
        \int u \dd \mu < \int u \dd \nu,
    \end{equation*}
    for every non-affine convex function $u$.Under non-degeneracy, irreducibility implies strict convex order, while the converse follows from $W_\infty$-stability, as shown in \cref{app:ass}. This equivalence does not require $Y$ to be convex. Likewise, our analysis does not impose convexity of $Y$, relying instead on geometric properties of its closed convex hull.
\end{remark}
\begin{remark}
    The assumption on the interior of $\mathrm{conv}(Y)$ is equivalent to $X \subseteq \mathrm{ri}(\mathrm{conv}(Y))$, where $\mathrm{ri}$ is the relative interior, up to a linear change of variable and reference space.
\end{remark}
\Cref{ass:qualification} implies the qualification of the moment constraint as shown in \cite[Proposition 2.8., Theorem 2.9., Lemma 2.14.]{CaMaSy26}:
\begin{proposition}\label{prop:qual-constraint}
    Under \cref{ass:qualification} there is $\theta > 0$ and $C_0 > 0$ such that for any $r \in L^\infty(\mu; \R^d)$ satisfying $\int r \dd \mu = 0$ and $\Vert r \Vert_\infty \leq \theta$ there is $\pi \in \Pi(\mu,\nu)$ such that
    \begin{equation*}
         \int y \dd \pi_x(y) = x+r(x) \quad\text{for }\mu\text{-a.e. } x \in X, \quad H(\pi \mid \mu \otimes \nu) \leq C_0.
    \end{equation*}
\end{proposition}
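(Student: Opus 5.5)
We construct the required coupling by taking convex combinations of the irreducible martingale coupling with a few explicit ``extremal'' couplings, so that the barycenter is shifted by the prescribed amount while the density stays bounded.

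\emph{Setup.} Fix the coupling $\bar\pi\in\MT(\mu,\nu)$ given by irreducibility in \cref{ass:qualification}. We have $\bar\pi_x\sim\nu$ for $\mu$-a.e. $x$, but the density $\dd\bar\pi_x/\dd\nu$ need not be bounded. The first step is therefore a truncation that yields a martingale coupling $\pi^0\in\MT(\mu,\nu)$ with bounded density. This is the role of \cite[Proposition 2.8]{CaMaSy26}. Concretely, one mixes $\bar\pi$ with $\mu\otimes\nu$ and corrects the barycenter defect using the non-degeneracy $X\subseteq\mathrm{int}(\mathrm{conv}(Y))$. If one prefers to avoid bounded densities, it suffices to have $H(\pi^0\mid\mu\otimes\nu)<\infty$ with $\dd\pi^0_x/\dd\nu\ge\delta>0$ on a set of large $\nu$-mass uniformly in $x$.

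\emph{Perturbation of barycenters.} By compactness and $X\subseteq\mathrm{int}(\mathrm{conv}(Y))$, there is $\rho>0$ such that $B(x,2\rho)\subseteq\mathrm{conv}(Y)$ for all $x\in X$. Choose finitely many points $y_1,\dots,y_N$ in $Y$ whose convex hull contains $X+B(0,2\rho)$. Take probability measures $\nu_j\ll\nu$ with bounded densities, concentrated near $y_j$, so that their barycenters $b_j$ still span a polytope containing $X+B(0,\rho)$.

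For a target $x+r(x)$ with $|r(x)|\le\theta$, write
\[
x+r(x)=(1-\lambda)x+\lambda z(x), \qquad z(x)=x+r(x)/\lambda.
\]
With $\lambda$ fixed and $\theta\le\lambda\rho$, the point $z(x)$ lies in $B(x,\rho)$, hence in the polytope. So $z(x)=\sum_j w_j(x)b_j$ for measurable weights $w_j(x)\ge0$ summing to one. The natural candidate kernel is
\[
\pi_x=(1-\lambda)\pi^0_x+\lambda\sum_j w_j(x)\nu_j,
\]
which has barycenter $x+r(x)$ and density bounded by a constant depending only on $\lambda$ and the $\nu_j$. Its second marginal is wrong, however: it equals $(1-\lambda)\nu+\lambda\sigma$ with $\sigma=\int\sum_j w_j(x)\nu_j\dd\mu(x)$, instead of $\nu$.

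\emph{Marginal correction (the main obstacle).} The defect $\lambda(\sigma-\nu)$ must be removed while keeping both the barycenters and the entropy bound. The plan is as follows.
\begin{enumerate}[label=(\roman*)]
\item Use the freedom in the weights $w_j$ to arrange $\int w_j(x)\dd\mu(x)$ in fixed proportions. Then $\sigma$ is a fixed mixture of the $\nu_j$, and its barycenter equals $\int(x+r/\lambda)\dd\mu=\int x\dd\mu$; this is exactly where the condition $\int r\dd\mu=0$ enters.
\item Subtract the excess mass $\lambda\sigma$ from $\pi^0$ through a reweighting of $\pi^0_x$. Using $\dd\pi^0_x/\dd\nu\ge\delta$, this reweighting can absorb a $\lambda$-perturbation with $\lambda\ll\delta$ while staying positive.
\item Repair the barycenter change caused by the subtraction by a second, smaller perturbation of the same type. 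This gives a contraction/fixed-point (Neumann series) argument in $L^\infty$, with all densities uniformly bounded.
\end{enumerate}
The entropy bound $C_0$ then follows from the uniform $L^\infty$ bound on $\dd\pi_x/\dd\nu$. I expect this balancing of marginal and barycenter defects to be the delicate part. It is essentially the content of \cite[Theorem 2.9, Lemma 2.14]{CaMaSy26}, which we would invoke directly rather than reprove in full.
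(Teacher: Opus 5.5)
The gap is in your \emph{Setup}. Your perturbation step is essentially sound, but it rests on a coupling $\pi^0$ that you never actually produce, and the route you sketch for it is circular.

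\begin{itemize}
\item Truncating $\dd\bar\pi/\dd(\mu\otimes\nu)$ breaks both the marginal and the martingale constraints.
\item Mixing, $(1-\lambda)\bar\pi+\lambda\,\mu\otimes\nu$, does not bound the density: any unboundedness of $\dd\bar\pi/\dd(\mu\otimes\nu)$ survives, multiplied by $1-\lambda$. It only buys the lower bound $\lambda$, at the price of the barycenter defect $\lambda(m_\nu-x)$.
\item Removing that defect while keeping the marginals and entropy under control is the same barycenter-shifting problem as the proposition itself, for the mean-zero shift $\lambda(x-m_\nu)$.
\end{itemize}

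Irreducibility only gives $\bar\pi_x\sim\nu$, with no uniformity in $x$ or $y$, and $\bar\pi$ need not even have finite entropy. So it cannot supply what (ii) uses: a finite-entropy martingale coupling with $\dd\pi^0_x/\dd\nu\ge\delta$ $\nu$-a.e., uniformly in $x$. Such a coupling does exist under \cref{ass:qualification}. In this paper, however, it only appears a posteriori, as $e^{u_{\xi^\ast}}\mu\otimes\nu$ via \cref{prop:duality} and \cref{thm:main-result}, whose proof rests on the present proposition. It is exactly the hypothesis the introduction says is \emph{not} assumed, in contrast with \cite{CheCoReWa26,EcLa26}.

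Given such a $\pi^0$, your construction would close, and more simply than you suggest. Step (i) is unjustified: prescribing $\int w_j\dd\mu=c_j$ amounts to $z_\#\mu\le_c\sum_j c_j\delta_{b_j}$. It is also unnecessary. The barycenters satisfy $m_\sigma=\int z\dd\mu=m_\mu=m_\nu$ whatever the weights, and this is where $\int r\dd\mu=0$ enters. Hence $(1-\lambda)\pi^0_x+\lambda\sum_j w_j(x)\nu_j+\lambda(\nu-\sigma)$ already has second marginal $\nu$ and barycenter $x+r(x)$, and step (iii) is not needed.

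The missing ingredient is quantitative. The paper imports this statement from \cite{CaMaSy26}, and the cited chain runs through the $W_\infty$-stability characterization of \cref{ass:qualification} (Theorem 2.9 there). With it, no reference coupling is needed.

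\begin{enumerate}
\item Partition $Y$ into finitely many Borel cells $A_k$ with $\operatorname{diam}(A_k)\le\eps$, $\nu(A_k)>0$ and $\nu(\bigcup_k A_k)=1$. Let $c_k$ be the barycenter of $\nu|_{A_k}$, and set $B:=1/\min_k\nu(A_k)$.
\item For a mean-zero $s$ with $\lVert s\rVert_\infty\le\eta$, stability and Strassen give some $m\in\MT((\Id+s)_\#\mu,\nu)$.
\item Define $\pi_x:=\sum_k m_{x+s(x)}(A_k)\,\nu|_{A_k}/\nu(A_k)$. This $\pi$ lies in $\Pi(\mu,\nu)$ and has density at most $B$. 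Its barycenter is $x+s(x)+e(x)$ with $\lVert e\rVert_\infty\le\eps$ and $\int e\dd\mu=0$, the latter because $m$ has second marginal $\nu$.
\end{enumerate}

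The leftover error is then removed by exactly the Neumann-series idea you had in mind, now in the right place.

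\begin{itemize}
\item Start from $R_0=r$. Take such a $\pi^{(k)}$ for $s_k=R_k/(1-\beta)$, with error $e_k$, and set $R_{k+1}=-\tfrac{1-\beta}{\beta}e_k$.
\item If $\eps\le\beta\eta$ and $\lVert r\rVert_\infty\le(1-\beta)\eta$, every $s_k$ is admissible.
\item The mixture $\sum_k(1-\beta)\beta^k\pi^{(k)}$ has shift $\sum_k\beta^k(R_k-\beta R_{k+1})=r$ by telescoping, and density at most $B$.
\end{itemize}

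This gives $\theta=(1-\beta)\eta$ and $C_0=\log B$.
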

This property first ensures finiteness of \labelcref{eq:emot-setting}, by taking $r=0$. For continuous costs, it also guarantees the existence of bounded dual optimizers \cite[Theorem~4.3]{CaMaSy26}. Here, we only assume $c\in L^\infty(\mu\otimes\nu)$ and establish dual attainment through the convergence of SISTA. Define the dual functional
\begin{equation}
\label{eq:main_dual}
\begin{aligned}
    F:\Hcal&\to \R\cup\{+\infty\} \\
    \xi=(\ph,\psi,h)&\mapsto
\int e^{u_\xi(x,y)}\dd\mu\otimes \nu( x,y)
-
\int \varphi\,\dd\mu
-
\int \psi\,\dd\nu,
\end{aligned}
\end{equation}
with $\xi=(\varphi,\psi,h)$  and
$u_\xi(x,y):=\varphi(x)+\psi(y)  + h(x)\cdot (y-x) - c(x,y)$. 
Though dual attainment under \cref{ass:qualification} is known for continuous costs, in the following we adopt our main convergence result. Indeed, \cref{thm:main-result}, guarantees the existence of a minimizer of $F$ in $\Hcal^\infty$, without using a priori existence of a minimizer, from which we derive strong duality and the Euler--Lagrange conditions in the following proposition. 
\begin{proposition}\label{prop:duality}
    Under \cref{ass:qualification}, \cref{thm:main-result} ensures the existence of $\xi^\ast \in \Hcal^\infty= L^\infty(\mu)\times L^\infty(\nu) \times L^\infty(\mu;\R^d)$ such that $F(\xi^\ast) = \inf_{\Hcal^\infty}F$ and thus
    \begin{equation*}
        \min_{\pi \in \MT(\mu,\nu)} \left\{ \int c \dd \pi+ H(\pi \mid \mu \otimes \nu)\right\} = 1 - \min_{\xi\in \Hcal^\infty}F(\xi).
    \end{equation*}
    Moreover, if $\xi^\ast \in \argmin_{\xi \in \Hcal^\infty} F(\xi)$ we have
    \begin{equation*}
        \{e^{u_{\xi^*}} \mu \otimes \nu \}=\argmin_{\pi \in \MT(\mu,\nu)}\left\{ \int c \dd \pi+ H(\pi \mid \mu \otimes \nu)\right\}
    \end{equation*}
\end{proposition}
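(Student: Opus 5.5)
We take from \cref{thm:main-result} that the SISTA iterates $\xi^n$ converge in $\Hcal^\infty$ to some $\xi^\ast$ with $F(\xi^n)\to\inf_{\Hcal^\infty}F$. Since $F$ is continuous on $\Hcal^\infty$ (dominated convergence, because $u_\xi$ is bounded uniformly along an $L^\infty$-convergent sequence), we get $F(\xi^\ast)=\inf_{\Hcal^\infty}F$, so $\xi^\ast$ is a minimizer. If the theorem only gives convergence in a weaker topology together with uniform $L^\infty$ bounds, we pass to an a.e. convergent subsequence and use dominated convergence to reach the same conclusion.

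Next we derive the Euler--Lagrange conditions at $\xi^\ast$. $F$ is Gâteaux differentiable on $\Hcal^\infty$, and the variations in each bounded direction vanish at the minimizer. Varying $\ph$ by $\delta\ph\in L^\infty(\mu)$ gives $\int e^{u_{\xi^\ast}}\dd\nu(y)=1$ for $\mu$-a.e. $x$. Varying $\psi$ gives $\int e^{u_{\xi^\ast}}\dd\mu(x)=1$ for $\nu$-a.e. $y$. Varying $h$ in $L^\infty(\mu;\R^d)$ gives $\int (y-x)e^{u_{\xi^\ast}}\dd\nu(y)=0$ for $\mu$-a.e. $x$. Together these say that $m^\ast:=e^{u_{\xi^\ast}}\mu\otimes\nu$ is a probability measure in $\MT(\mu,\nu)$. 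Its entropy is finite, since its density is bounded above and below.

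The duality identity follows from \eqref{eq:intro_weak_duality} with $\varepsilon=1$. For any $m\in\MT(\mu,\nu)$ with finite entropy and any $\xi\in\Hcal^\infty$ we have $\int c\dd m+H(m\mid\mu\otimes\nu)\ge 1-F(\xi)$. The bounded potentials are integrable, and the term $\int h(x)\cdot(y-x)\dd m$ vanishes by the martingale property, which is what identifies the pairing with $\int\ph\dd\mu+\int\psi\dd\nu$. This gives $\inf_{\MT}\ge 1-\inf_{\Hcal^\infty}F$. For $m^\ast$ and $\xi^\ast$, the density is $\rho=e^{u}$ with $u=u_{\xi^\ast}$, so the Fenchel--Young gap vanishes pointwise. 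Hence $\int c\dd m^\ast+H(m^\ast\mid\mu\otimes\nu)=1-F(\xi^\ast)$, and equality holds. In particular the primal infimum is attained at $m^\ast$.

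For the argmin characterization, let $\xi^\ast$ be any minimizer in $\Hcal^\infty$. The previous step shows that $e^{u_{\xi^\ast}}\mu\otimes\nu$ is a primal optimizer. Uniqueness of the primal optimizer follows from strict convexity of $\pi\mapsto H(\pi\mid\mu\otimes\nu)$ on the convex set $\MT(\mu,\nu)$, together with linearity of the cost term. Alternatively, if $m$ is any optimizer, the equality case in \eqref{eq:intro_weak_duality} with $\xi^\ast$ forces $\rho=e^{u_{\xi^\ast}}$ almost everywhere.

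The main obstacle is justifying weak duality for arbitrary finite-entropy $m$. There, $u\rho$ must be integrable and $\rho\log\rho-\rho+e^u$ must be handled carefully. Since $u$ is bounded and $m$ is a probability measure, $\int u\dd m$ is finite and all terms are integrable, so this step is routine.
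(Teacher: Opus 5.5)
Your proposal is correct and follows essentially the same route as the paper. Both arguments use weak duality via the Fenchel--Young inequality (the $h$-term vanishes by the martingale property), the first-order conditions at the minimizer supplied by \cref{thm:main-result} to get $e^{u_{\xi^\ast}}\mu\otimes\nu\in\MT(\mu,\nu)$, equality in Fenchel--Young, and uniqueness from strict convexity of the entropy. One small correction: \cref{thm:main-result} gives convergence in $\Hcal$, not $\Hcal^\infty$, but it directly provides a minimizer $\xi^\infty\in\Hcal^\infty$ of $F$ over $\Hcal\supset\Hcal^\infty$, so your continuity and dominated-convergence detour is unnecessary.
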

\begin{proof}
    Let $\pi \in \MT(\mu,\nu)$ be such that $H(\pi \mid \mu \otimes \nu ) < + \infty$ and $\xi=(\ph,\psi,h) \in \Hcal^\infty$. The Fenchel--Young inequality yields $\mu \otimes \nu$ almost surely
    \begin{equation*}
        \rho(x,y)\log\left(\rho(x,y)\right)-\rho(x,y) + e^{u_\xi(x,y)} \geq u_\xi(x,y)\rho(x,y)
    \end{equation*}
    where $\rho(x,y) = \frac{\dd \pi}{\dd \mu \otimes \nu} (x,y)$. Since $u_\xi \in L^\infty(\mu \otimes \nu)$ and $\rho \in L^1(\mu\otimes \nu)$, because the relative entropy is finite, we have by integration
    \begin{equation*}
        H(\pi \mid \mu \otimes \nu) - 1 + \int e^{u_\xi} \dd \mu \otimes \nu \geq \int \ph \dd \mu + \int \psi \dd \nu + \int h(x)\cdot(y-x) \dd \pi(x,y) - \int c \dd \pi,
    \end{equation*}
    since $\pi \in \Pi(\mu,\nu)$.
    The term involving $h$ is equal to zero because $\int (y-x) \dd \pi_x(y) = 0$ for $\mu$ almost every $x$. Thus we get
    \begin{equation*}
        \int c \dd \pi + H(\pi \mid \mu \otimes \nu) \geq 1-F(\xi).
    \end{equation*}
    Optimizing in $\pi$ and $\xi$ we have
    \begin{equation}\label{eq:proof-weak-duality}
        \inf_{\pi \in \MT(\mu,\nu)}\int c \dd \pi + H(\pi \mid \mu \otimes \nu) \geq 1-\inf_{\xi\in \Hcal^\infty}F(\xi)
    \end{equation}
    \Cref{thm:main-result} ensures the existence of $\xi^\ast\in \Hcal^\infty$ minimizing $F$. Setting $\rho^\ast(x,y) = e^{u_{\xi^\ast}(x,y)}$ we have equality in the Fenchel--Young inequality and thus
    \begin{equation}\label{eq:proof-euler-lagrange}
      1-\min_{\xi \in \Hcal^\infty} F(\xi) = 1-F(\xi^\ast) =   \int c \dd \pi^\ast + H(\pi^\ast \mid \mu \otimes \nu) \geq \inf_{\pi \in \MT(\mu,\nu)} \int c \dd \pi + H(\pi \mid \mu \otimes \nu)
    \end{equation}
    where $\pi^\ast = e^{u_{\xi^\ast}}\mu \otimes \nu$ has finite relative entropy since $u_{\xi^\ast}$ is bounded. The last inequality holds since $\pi^\ast \in \MT(\mu,\nu)$ by the first order optimality conditions at $\xi^\ast$ in the vector space $\Hcal^\infty$. Indeed, using the Fréchet differentiability of $F$ over $\Hcal^\infty$ from \cref{prop:well-defined}, we have $\nabla_\ph F(\xi^\ast) =0$ which implies $\int e^{u_{\xi^\ast}} \dd \nu=1$ and thus that $\pi^\ast$ has first marginal $\mu$. Similarly $\nabla_\psi F(\xi^\ast)=0$ implies that $\pi^\ast$ has second marginal $\nu$. And $\nabla_h F(\xi^\ast) =0$ implies $\int ye^{u_{\xi^\ast}(x,y)} \dd \nu(y)=x$ which is the martingale property for $\pi^\ast$.  
    The conclusion holds by combining \cref{eq:proof-euler-lagrange,eq:proof-weak-duality}. The uniqueness of the minimizer $\pi^\ast$ comes from the strict convexity of the relative entropy.
\end{proof}
This result justifies our goal of minimizing algorithmically $F$ to obtain a solution of \labelcref{eq:emot-setting}. 

\subsection{Definition of the Algorithm and Well Posedness}
Following the entropic optimal transport literature we propose a Sinkhorn based optimization algorithm to minimize $F$ in \cref{eq:main_dual}. This algorithm, introduced in \cite{CaDuGaSu23} as SISTA, consists in two exact block optimization steps and a block gradient step that we detail in \cref{alg:SISTA}.

\begin{algorithm}[ht]
\caption{SISTA}
\label{alg:SISTA}

\begin{algorithmic}[1]

\State \textbf{Input:}
measures $\mu, \nu$, step size $\tau > 0$,
initial $\xi^0 = (\varphi^0, \psi^0, h^0)$

\State \textbf{Define recursively:}

\Statex 
\begin{equation*}
    \xi^{n+\frac{1}{3}}:=(\ph^n,\psi^{n+1},h^n),\quad\xi^{n+\frac{2}{3}}:=(\ph^{n+1},\psi^{n+1},h^n)
\end{equation*}
\State \textbf{(S)} \textit{Sinkhorn block:}

\Statex
\begin{equation*}
    \psi^{n+1}(y)\leftarrow-\log\int \exp\!\left(\varphi^n(x) + h^n(x) \cdot (y - x)-c(x,y)\right)\dd \mu(x)
\end{equation*}

\Statex
\begin{equation*}
    \varphi^{n+1}(x)\leftarrow-\log\int \exp\!\left( \psi^{n+1}(y) + h^n(x) \cdot (y - x)-c(x,y)\right)\dd\nu(y)
\end{equation*}

\Statex
\State \textbf{(G)} \textit{Gradient block:}

\Statex
\begin{equation*}
    h^{n+1}(x)\leftarrow h^n(x) - \tau \nabla_h F(\xi^{n+\frac23})(x)
\end{equation*}

The $h$-gradient at $\xi^{n+\frac23}$ is
\[
R^n(x)
:=
\nabla_hF(\xi^{n+\frac23})(x)
=
\int 
(y-x)e^{u_{\xi^{n+2/3}}(x,y)}\dd \nu(y).
\]
\end{algorithmic}
\end{algorithm}

The natural spaces to study algorithms in infinite dimension are Hilbert spaces. However, most differential properties of $F$ are only true when $u_\xi$ is bounded. This is in particular the case when $\xi$ is bounded. Hence we will carry out the analysis on the following two spaces

\begin{equation*}
    \Hcal := L^2(\mu)\times L^2(\nu)\times L^2(\mu;\R^d), \quad \Hcal^\infty:= L^\infty(\mu)\times L^\infty(\nu)\times L^\infty(\mu;\R^d).
\end{equation*}
\begin{proposition}\label{prop:well-defined}
    The function $F$ is twice Fréchet differentiable as a function defined over $\Hcal^\infty$ and $\nabla F \in \Hcal^\infty$. In particular if $\xi^0 \in \Hcal^\infty$ then $\xi^n \in \Hcal^\infty$ for every $n \in \mathbb{N}$ and, thus, \cref{alg:SISTA} is well defined.
\end{proposition}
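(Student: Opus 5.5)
The plan is to write $F$ as a composition of a bounded linear map and a smooth Nemytskii operator, and then differentiate under the integral. Define the linear map
\[
A\colon \Hcal^\infty\to L^\infty(\mu\otimes\nu),\qquad A\xi(x,y)=\ph(x)+\psi(y)+h(x)\cdot(y-x).
\]
It is bounded: $X$ and $Y$ are compact by \cref{ass:qualification}, so $D:=\sup_{x\in X,y\in Y}|y-x|<\infty$ and
\[
\|A\xi\|_\infty\le \|\ph\|_\infty+\|\psi\|_\infty+D\|h\|_\infty .
\]
Since $c\in L^\infty(\mu\otimes\nu)$, this gives $u_\xi=A\xi-c\in L^\infty(\mu\otimes\nu)$. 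We can then write $F(\xi)=G(A\xi-c)-\ell(\xi)$, where $\ell(\xi)=\int\ph\dd\mu+\int\psi\dd\nu$ is continuous and linear, and
\[
G(u)=\int e^{u}\dd\mu\otimes\nu \quad\text{on } L^\infty(\mu\otimes\nu).
\]
By the chain rule it suffices to show that $G$ is twice Fréchet differentiable on $L^\infty$.

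For $G$, use the pointwise Taylor expansion with remainder: for $|v|\le 1$,
\[
\Bigl|e^{u+v}-e^u-e^uv-\tfrac12e^uv^2\Bigr|\le \tfrac16 e^{\|u\|_\infty+1}|v|^3 .
\]
Integrating against the probability measure $\mu\otimes\nu$ shows that $G$ is twice Fréchet differentiable at every $u\in L^\infty$. The derivatives are
\[
DG(u)[v]=\int e^uv, \qquad D^2G(u)[v,w]=\int e^uvw,
\]
and the remainder is $O(\|v\|_\infty^3)$. Continuity of $u\mapsto D^2G(u)$ follows from $|e^{u'}-e^{u}|\le e^{\max(\|u\|_\infty,\|u'\|_\infty)}|u'-u|$. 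Composing with the affine map $\xi\mapsto A\xi-c$ gives twice Fréchet differentiability of $F$ on $\Hcal^\infty$.

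Next I identify the gradient by Fubini. For any direction $(\delta\ph,\delta\psi,\delta h)$,
\[
DF(\xi)[\delta\ph,\delta\psi,\delta h]=\int e^{u_\xi}\bigl(\delta\ph(x)+\delta\psi(y)+\delta h(x)\cdot(y-x)\bigr)\dd\mu\otimes\nu-\int\delta\ph\dd\mu-\int\delta\psi\dd\nu .
\]
This is represented, in the $L^2$ pairing, by
\[
\nabla F(\xi)=\Bigl(\int e^{u_\xi(\cdot,y)}\dd\nu(y)-1,\ \int e^{u_\xi(x,\cdot)}\dd\mu(x)-1,\ \int (y-\cdot)e^{u_\xi(\cdot,y)}\dd\nu(y)\Bigr).
\]
Each component is bounded: the first two by $e^{\|u_\xi\|_\infty}+1$, and the third by $De^{\|u_\xi\|_\infty}$. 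Hence $\nabla F(\xi)\in\Hcal^\infty$.

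Finally, well-posedness follows by induction on $n$. Suppose $\xi^n\in\Hcal^\infty$. The integrand in the $\psi$-update is bounded above and below by $e^{\pm(\|\ph^n\|_\infty+D\|h^n\|_\infty+\|c\|_\infty)}$, and $\mu$ is a probability measure. So the $\psi$-update is finite with
\[
\|\psi^{n+1}\|_\infty\le \|\ph^n\|_\infty+D\|h^n\|_\infty+\|c\|_\infty .
\]
The same bound, with $\psi^{n+1}$ in place of $\ph^n$, shows $\ph^{n+1}\in L^\infty(\mu)$. Then $\xi^{n+\frac23}\in\Hcal^\infty$, so $R^n=\nabla_hF(\xi^{n+\frac23})\in L^\infty(\mu;\R^d)$ by the gradient bound, and therefore $h^{n+1}\in L^\infty$.

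Measurability of the updates follows from Fubini–Tonelli, since $u_\xi$ is jointly measurable. The only delicate point is that the estimates must be in $L^\infty$ norms, not $L^2$ norms: $e^u$ is not Fréchet differentiable on $L^2$. The Taylor remainder bound above handles exactly this.
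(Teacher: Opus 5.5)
Your proof is correct and follows the same route as the paper's: differentiability comes from the boundedness of the exponent and $\mu,\nu$ being probability measures, the gradients are given by the explicit formulas bounded via compactness of $X\times Y$, and well-posedness follows by induction through the two Sinkhorn blocks and the gradient block, which you simply carry out in more detail. One small tightening: a second-order Taylor expansion alone does not give twice Fréchet differentiability, but the pointwise estimate one order lower closes this directly, since $\sup_{\|v\|_\infty\le 1}\bigl|\int (e^{u+w}-e^u-e^uw)v\dd(\mu\otimes\nu)\bigr|\le \tfrac12 e^{\|u\|_\infty+1}\|w\|_\infty^2$ for $\|w\|_\infty\le 1$ shows that $DG$ is Fréchet differentiable with $D^2G(u)[v,w]=\int e^u vw\dd(\mu\otimes\nu)$.
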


\begin{proof}
    Since $c\in L^\infty(\mu\otimes\nu)$, the twice Fréchet differentiability of $F$ on $\mathcal H_\infty$ follows directly from the boundedness of the exponent and the fact that $\mu,\nu$ are probability measures. A direct computation shows that
    \begin{equation*}
        \nabla_\ph F(\xi) = \int e^{u_\xi} \dd \nu-1,\quad \nabla_\psi F(\xi) = \int e^{u_\xi}\dd \mu - 1, \quad \nabla_h F(\xi) = \int( y-x )e^{u_\xi(x,y)} \dd \nu(y)
    \end{equation*}
    and every term is bounded.
    If $\xi^n\in \Hcal^\infty$ then the Sinkhorn blocks remain in $\Hcal^\infty$ and thus $\xi^{n+\frac 13},\xi^{n+\frac 23} \in \Hcal^\infty$. In particular the gradient is well defined and bounded by compactness of $X\times Y$, so that $\xi^{n+1} \in \Hcal^\infty$ which proves the claim.
\end{proof}

Finally, we introduce some notation used throughout the analysis. Under our standing assumptions, we set
\[
M_c\coloneqq\|c\|_{L^\infty(\mu\otimes\nu)}, \qquad
D_\mu\coloneqq\operatorname{diam}(X),\qquad
D_{\nu}\coloneqq\operatorname{diam}(Y).
\]
Moreover, the non-degeneracy assumption ensures that the covariance matrix of $\nu$ is positive definite. We denote
\[
\Sigma_\nu\coloneqq\operatorname{Cov}(\nu),\qquad
\lambda_\nu\coloneqq
\inf_{\substack{x\in\R^d\\\|x\|=1}}
x\cdot\Sigma_\nu x>0.
\]
We are now ready to state our main result.

\begin{theorem} \label{thm:main-result}
Assume that \cref{ass:qualification} holds.
Let $(\xi^n)_{n\geq0}$ be the SISTA iterates
starting from $\xi^0\in\Hcal^\infty$,
with step size $\tau>0$ satisfying
$
2\tau D_{\nu}^2\leq1.
$
Set
$
E_n:=F(\xi^n)-\inf_{\Hcal}F.
$
Then, there exist $\alpha>0$ and $q\in(0,1)$
such that:
\begin{enumerate}
    \item[(i)] $E_{n+1}\leq qE_n$
    for all $n\geq0$;

    \item[(ii)] There exists a minimizer
    $\xi^\infty\in\Hcal^\infty$ of $F$
    over $\Hcal$ such that
    \[
    \|\xi^n-\xi^\infty\|_{\Hcal}
    \leq
    \frac{\sqrt{2E_0/\alpha}}{1-\sqrt{q}}\,
    q^{n/2},
    \qquad n\geq0.
    \]
\end{enumerate}
\end{theorem}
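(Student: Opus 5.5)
The plan follows the three-part strategy from the introduction: an abstract block-coordinate convergence result, uniform $L^\infty$ bounds on the iterates, and a Polyak--{\L}ojasiewicz inequality on the resulting bounded set.

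\textbf{Abstract result.} First I prove a convergence theorem for the cyclic scheme (exact $\psi$-step, exact $\varphi$-step, gradient $h$-step) on the Hilbert space $\Hcal$. Let $K$ be a closed convex set containing all partial iterates $\xi^n,\xi^{n+\frac13},\xi^{n+\frac23}$. Suppose that on $K$ the functional $F$ is convex, has $L$-Lipschitz gradient, and satisfies a PL inequality $\|\nabla F(\xi)\|^2\ge 2\alpha(F(\xi)-\inf F)$.
\begin{itemize}
\item The $h$-step is a descent step: since $\nabla_h^2F(\xi)$ is the conditional second moment $\int (y-x)(y-x)^T e^{u_\xi}\dd\nu$, its operator norm is at most $D_\nu^2$ after the Sinkhorn normalisation $\int e^{u}\dd\nu=1$. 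Together with $2\tau D_\nu^2\le1$ this gives $F(\xi^{n+1})\le F(\xi^{n+\frac23})-\frac{\tau}{2}\|\nabla_hF(\xi^{n+\frac23})\|^2$.
\item The exact block minimisations give $F(\xi^{n+\frac13})\le F(\xi^n)-\frac{1}{2L}\|\nabla_\psi F(\xi^n)\|^2$, and similarly for the $\varphi$-step.
\item Following \cite[Thm.~3.1]{CaDuGaSu23}, I bound $\|\nabla F(\xi^{n})\|$ (or the gradient at a partial iterate) by the sum of these three decrement terms. Since only one block moves at each sub-step, this uses Lipschitz continuity of $\nabla F$ between consecutive partial iterates and the per-block strong convexity. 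The PL inequality then yields $E_{n+1}\le qE_n$.
\end{itemize}
For (ii), I use the quadratic growth consequence of PL, $\operatorname{dist}(\xi,\argmin F)^2\le 2E/\alpha$, together with the decrement inequalities. These give $\|\xi^{n+1}-\xi^n\|\lesssim\sqrt{E_n-E_{n+1}}\le\sqrt{2E_n/\alpha}$, with constants absorbed into $\alpha$. So $(\xi^n)$ is Cauchy, and summing the geometric series gives the stated bound $\frac{\sqrt{2E_0/\alpha}}{1-\sqrt q}q^{n/2}$. The limit $\xi^\infty$ has zero gradient, hence minimises $F$ over $\Hcal$ by convexity.

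\textbf{Uniform bounds (main obstacle).} I have to exhibit $M$ with $\|u_{\xi}\|_\infty\le M$ at every partial iterate. Monotonicity gives $F(\xi^n)\le F(\xi^0)$. After each Sinkhorn step the normalisations $\int e^{u}\dd\nu=1$ or $\int e^u\dd\mu=1$ hold, so $F$ reduces to $1-\int\varphi\dd\mu-\int\psi\dd\nu$. I then use \cref{prop:qual-constraint} as a coercivity tool. For any $r$ with $\int r\dd\mu=0$ and $\|r\|_\infty\le\theta$, take the plan $\pi_r$ it provides and apply Fenchel--Young against $\pi_r$; this gives
\[
\int h\cdot r\dd\mu\le F(\xi)+C_0+M_c .
\]
Choosing $r=\theta\,(h-\bar h)/|h-\bar h|$, corrected to have zero mean, with $\bar h$ the gauge-centred version, bounds $\|h-\bar h\|_{L^1(\mu)}$. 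This is only an $L^1$ bound, which is why I expect this step to be the main difficulty.

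To reach $L^\infty$, I would try a pointwise version. The gradient step moves $h^n(x)$ by $-\tau R^n(x)$, and $R^n(x)\cdot(h^n(x)-a)$ is positive once $|h^n(x)-a|$ is large. The reason is that $X\subset\operatorname{int}\operatorname{conv}(Y)$ and the Gibbs measure $\pi^n_x$ tilts towards the direction of $h^n(x)$. Since $e^{\varphi^{n+1}(x)}$ normalises $\pi^n_x$ to a probability measure, this pointwise coercivity estimate does not depend on $\varphi$. It shows that $|h^n(x)-a|$ cannot increase beyond a threshold depending on the $L^1$ bound, $\tau$, $D_\nu$ and the distance of $X$ to $\partial\operatorname{conv}(Y)$. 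Having fixed the gauge (using the invariance to shift by $a$, and by $b$ to normalise $\int\psi\dd\nu$), I then bound $\psi^{n+1}$ and $\varphi^{n+1}$ from their log-integral formulas: $\|h\|_\infty$ and $M_c$ give oscillation bounds, and the $F$ bound gives control of the averages. This yields $\|u_\xi\|_\infty\le M$.

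\textbf{PL on $K$.} On $K=\{\|u_\xi\|_\infty\le M\}$, the Hessian of $F$ in the $L^2$ sense is $\int e^{u}(\delta u)^2\dd\mu\otimes\nu$, which lies between $e^{-M}$ and $e^{M}$ times $\|\delta u\|^2_{L^2(\mu\otimes\nu)}$, where $\delta u=\delta\varphi(x)+\delta\psi(y)+\delta h(x)\cdot(y-x)$. This gives smoothness, since $\|\delta u\|$ is controlled by $\|\delta\xi\|$ via $D_\nu$, and per-block strong convexity in $\varphi$ and $\psi$. Modulo the gauge, I get $\|\delta u\|^2\ge\kappa\|\delta\xi\|^2$: decomposing $\delta u$ in $L^2(\mu\otimes\nu)$ and using $\lambda_\nu>0$ shows that $\delta u=0$ forces $\delta\xi$ to be affine-gauge. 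Strong convexity modulo this finite-dimensional kernel then gives the PL inequality with $\alpha\sim e^{-M}\kappa$. I also need to check that $K$ can be replaced by a convex set containing the segment to a minimiser, which holds because $K$ is convex in $\xi$. Finally, $\inf_{\Hcal}F=\inf_K F$, because by \cref{eq:proof-weak-duality}, Fenchel--Young makes the infimum over $\Hcal$ of the same value as over $\Hcal^\infty$.
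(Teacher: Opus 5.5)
\textbf{The gap is in the step from $L^1$ to $L^\infty$ bounds on $h$.} The rest of your plan follows the paper, and the abstract and PL parts are essentially right. The pointwise estimate $\widehat h(x)\cdot R(x)\ge\frac{\rho}{4}\|\widehat h(x)\|$ for large $\|\widehat h(x)\|$ depends on the gauge. Its threshold depends on $\sup\psi$ and $\|\psi\|_{L^1(\nu)}$ in the chosen gauge. These are controlled only when the shift is, up to a bounded error, the mean $a^n=\int h^n\dd\mu$, which is the gauge in which your $L^1$ bound holds. That centre moves with $n$: $a^{n+1}=a^n-\tau\overline R^n$ with $\overline R^n=\int R^n\dd\mu$. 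Hence
\[
\|\widehat h^{n+1}(x)\|^2=\|\widehat h^n(x)\|^2-2\tau\,\widehat h^n(x)\cdot R^n(x)+2\tau\,\widehat h^n(x)\cdot\overline R^n+\tau^2|R^n(x)-\overline R^n|^2 .
\]
The drift term can be as large as $2\tau D_\nu\|\widehat h^n(x)\|$. This exceeds the coercivity gain $\frac{\tau\rho}{2}\|\widehat h^n(x)\|$, since $\rho\le D_\nu$. So positivity of $R^n(x)\cdot(h^n(x)-a^n)$ does not stop points far from the centre from moving further out. You also cannot fix $a$ once and for all. A priori you only know $\sum_n|\overline R^n|^2<\infty$, not $\sum_n|\overline R^n|<\infty$, so $|a^n-a|$ need not stay bounded and the coercivity threshold in the gauge $a$ is not uniform. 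Your claim that the threshold depends only on the $L^1$ bound, $\tau$, $D_\nu$ and the geometry is therefore unsupported.

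\textbf{The missing idea is a counting argument.} The descent estimate for the gradient block gives $\sum_n\|R^n\|^2_{L^2(\mu;\R^d)}\le\frac{2}{\tau}(F(\xi^0)-F^*)$, and by Jensen the same bound holds for $\sum_n|\overline R^n|^2$. Fix $C'<\rho/4$. Then at most $2(F(\xi^0)-F^*)/(\tau (C')^2)$ indices satisfy $|\overline R^n|>C'$.
\begin{itemize}
\item At the remaining indices, once $\|\widehat h^n(x)\|\ge M_0$, the margin $\frac{\rho}{4}-C'$ absorbs both the drift and the $\tau^2$ term, so $\|\widehat h^n(x)\|$ does not increase.
\item At the few bad indices, $\|\widehat h^n(x)\|$ grows by at most $2\tau D_\nu$.
\end{itemize}
Induction then gives $\|\widehat h^n\|_{L^\infty}\le G+2\tau D_\nu b_n$, where $b_n$ counts bad indices before $n$. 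This is why the paper's $M_h$ involves $F(\xi^0)-F^*$.

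\textbf{A smaller omission in the same step.} The tilt argument (\cref{lem:upper-bound-zpsih}, \cref{prop:pullback}) needs a uniform upper bound on $\widehat\psi^{n+1}$, not only an $L^1$ bound. The paper obtains it from Jensen applied to the $\psi$-update together with $\int\tilde\varphi^n\dd\mu\ge 1-F(\xi^0)$.

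\textbf{Minor points.}
\begin{itemize}
\item Along the gradient segment the normalisation $\int e^{u}\dd\nu=1$ is lost, so the $h$-Hessian bound is $D_\nu^2e^{\tau D_\nu^2}$ rather than $D_\nu^2$. This still suffices under $2\tau D_\nu^2\le 1$.
\item PL on $K$ holds only relative to $\inf_K F$. The identity $\inf_K F=\inf_{\Hcal}F$ should come from the limit being a critical point together with convexity on all of $\Hcal$, not from weak duality.
\item Quadratic growth towards $\argmin F$ presupposes a minimiser and is not needed.
\end{itemize}
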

We emphasize the constant $q$ depends on  $\mu$, $\nu$, $\tau$, $||c||_{\infty}$ and $\xi^0$. Also note that the step size $\tau$ ensuring convergence of the algorithm doesn't depend on $c$ and in particular on $\eps$.

\section{Properties of $F$}\label{sec:properties}

In this section we show the properties of $F$ which will be useful to show the  convergence of the algorithm. Recall that 
\begin{align*}
    F:\Hcal&\to \R\cup\{+\infty\} \\
    \xi=(\ph,\psi,h)&\mapsto
\int e^{u_\xi(x,y)}\dd\mu\otimes \nu( x,y)
-
\int \varphi\,\dd\mu
-
\int \psi\,\dd\nu,
\end{align*}
with 
$u_\xi(x,y):=\varphi(x)+\psi(y)  + h(x)\cdot (y-x)-c(x,y)$ is twice Fréchet differentiable (\cref{prop:well-defined}). Note that the function is well defined by positivity of the exponential.

\subsection{Coercivity of $F$ in the $h$ Variable}
The first a priori estimate we will need in the algorithm is the coercivity of the functional in $h$.
\begin{lemma}\label{lem:coercivity}
     There are $C,\theta> 0$ such that for every $\xi \in \Hcal^\infty$
    \begin{equation*}
        F(\xi) \geq \theta \inf_{a \in \R^d} \Vert h - a \Vert_{L^1(\mu)} - C \quad \text{and} \quad\inf_{\Hcal^\infty}\in\R.
    \end{equation*}
\end{lemma}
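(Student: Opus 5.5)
We use weak duality against the perturbed martingale couplings of \cref{prop:qual-constraint}. Fix $\xi=(\ph,\psi,h)\in\Hcal^\infty$. For any $r\in L^\infty(\mu;\R^d)$ with $\int r\dd\mu=0$ and $\|r\|_\infty\le\theta_0$ (writing $\theta_0$ for the constant of \cref{prop:qual-constraint}), that proposition gives $\pi^r\in\Pi(\mu,\nu)$ with barycenters $x+r(x)$ and $H(\pi^r\mid\mu\otimes\nu)\le C_0$. We then repeat the Fenchel--Young computation from the proof of \cref{prop:duality}, now with $\pi^r$. The $h$-term no longer vanishes but equals $\int h\cdot r\dd\mu$, so we get
\begin{equation*}
F(\xi)\ \ge\ \int h(x)\cdot r(x)\dd\mu(x) - \int c\dd\pi^r - H(\pi^r\mid\mu\otimes\nu)+1\ \ge\ \int h\cdot r\dd\mu - M_c - C_0 + 1 .
\end{equation*}
All integrals are finite because $u_\xi$ is bounded and $\pi^r$ has finite entropy.

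It remains to choose $r$ so that $\int h\cdot r\dd\mu$ is comparable to $\inf_a\|h-a\|_{L^1(\mu)}$. Since $\int r\dd\mu=0$, we have $\int h\cdot r\dd\mu=\int (h-a)\cdot r\dd\mu$ for every $a\in\R^d$, so the pairing only detects $h$ modulo constants. We take $r=\theta_0\,(s-\int s\dd\mu)/2$ with $s(x)=\mathrm{sign}$-type direction $s=(h-a^*)/|h-a^*|$ (set to $0$ where $h=a^*$), for a suitable $a^*$. Then $\|r\|_\infty\le\theta_0$ and
\begin{equation*}
\int h\cdot r\dd\mu=\tfrac{\theta_0}{2}\Big(\|h-a^*\|_{L^1(\mu)}-\int (h-a^*)\dd\mu\cdot\int s\dd\mu\Big).
\end{equation*}
The natural choice of $a^*$ is a minimizer of the convex coercive function $a\mapsto\|h-a\|_{L^1(\mu)}$, a geometric median. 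Its first-order condition gives $\int s\dd\mu=0$ when $\mu(\{h=a^*\})=0$. In general it gives only $|\int s\dd\mu|\le\mu(\{h=a^*\})$, with the subgradient on the atom absorbing the defect. We handle this by defining $s$ on $\{h=a^*\}$ as the appropriate constant vector of norm at most $1$ making $\int s\dd\mu=0$. In either case $r=\tfrac{\theta_0}{2}s$ is admissible and $\int h\cdot r\dd\mu=\tfrac{\theta_0}{2}\inf_a\|h-a\|_{L^1(\mu)}$. This yields the lemma with $\theta=\theta_0/2$ (or even $\theta_0$) and $C=M_c+C_0-1$.

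The second claim, $\inf_{\Hcal^\infty}F\in\R$, follows at once: the bound gives $F\ge -C$, and $F(0)<\infty$. The main obstacle is the choice of $a^*$ and $s$ on the level set $\{h=a^*\}$: we need the optimality condition for the geometric median in the measure-theoretic form $0\in\int \partial|h-a^*|\dd\mu$, which is where the zero-mean constraint on $r$ is met. If this subgradient selection proves delicate, a cruder fallback suffices. Take $a=\int h\dd\mu$ and use componentwise signs, centered, with a factor-of-two loss. The inequality $\|h-m\|_{L^1}\le 2\inf_a\|h-a\|_{L^1}$ handles the change of center, at the cost of a dimensional constant in $\theta$.
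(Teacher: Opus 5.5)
Your proof is correct and follows the paper's route. Testing Fenchel--Young against the couplings of \cref{prop:qual-constraint} gives $F(\xi)\ge\int h\cdot r\dd\mu-C$ for every admissible $r$ (since you keep the $+1$, take $C=M_c+C_0$ to guarantee $C>0$). The only difference is that the paper quotes \cite[Lemma 6.1]{CaMaSy26} for the identity $\sup_r\int h\cdot r\dd\mu=\theta\inf_{a}\|h-a\|_{L^1(\mu)}$, whereas your geometric-median subgradient construction proves the needed inequality by hand.

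Your fallback is even simpler than you state: centering at $m=\int h\dd\mu$ kills the cross term $\int(h-m)\dd\mu\cdot\int s\dd\mu$. So the Euclidean direction $s=(h-m)/\|h-m\|$ already yields $\theta=\theta_0/2$ with no dimensional loss. It uses only the trivial bound $\|h-m\|_{L^1(\mu)}\ge\inf_{a}\|h-a\|_{L^1(\mu)}$, not the factor-two inequality.
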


\begin{proof} 
By weak duality, shown in \cref{prop:duality} we have 
    \begin{equation*}
    \begin{aligned}
        F(\xi) \geq \inf_{\varphi\in L^\infty(\mu),\psi\in L^\infty(\nu)} F(\varphi,\psi,h) &\geq \sup_{\pi\in\Pi(\mu,\nu)} \int h(x)\cdot (y-x) \dd \pi(x,y) -  H(\pi \mid \mu \otimes \nu)-\int c \dd \pi \\
        & \geq \sup_{\pi\in\Pi(\mu,\nu)} \int h(x)\cdot (y-x) \dd \pi(x,y) -  H(\pi \mid \mu \otimes \nu)-M_c.
    \end{aligned}
    \end{equation*}
    \Cref{prop:qual-constraint} ensures the existence of $C_0,\theta >0$  such that for any $r \in L^\infty(\mu; \R^d)$ satisfying $\int r \dd \mu = 0$ and $\Vert r \Vert_\infty \leq \theta$, there exists $\pi \in \Pi(\mu,\nu)$ satisfying $H(\pi\mid \mu \otimes \nu) \leq C_0$ and $\int (y-x) \dd \pi_x(y) = r(x)$. Thus we have
    \begin{equation*}
        F(\xi) \geq \sup_{\Vert r \Vert_{L^\infty(\mu)}\leq \theta, \int r \dd \mu =0} \int h(x)\cdot r(x) \dd \mu(x) - C,
    \end{equation*}
    where, $C:= C_0+M_c$. This
     proves the claim since by \cite[Lemma 6.1.]{CaMaSy26}
    \begin{equation*}
        \sup_{\Vert r \Vert_{L^\infty(\mu)}\leq \theta, \int r \dd \mu =0} \int h(x)\cdot r(x) \dd \mu(x)  = \theta \inf_{a \in \R^d} \Vert h - a \Vert_{L^1(\mu)}.
    \end{equation*}
In particular, since
$F(0,0,0)\leq e^{M_c}<+\infty$,
we have $\inf_{\mathcal H^\infty}F\in\R$.
\end{proof}

\subsection{Properties of $\nabla F$ on $\Hcal^\infty$}

As was pointed out earlier it is not reasonable to expect $F$ to have Lipschitz gradients on the whole of $\Hcal^\infty$. However, when $\ph$ is such that the exponential term has mass one then $\nabla F$ is Lipschitz and bounded.
For $\psi \in L^\infty(\nu),h \in L^\infty(\mu;\R^d)$ define
\begin{equation}\label{eq:normalization-constant}
    Z_{\psi,h}:x\mapsto  -\log\left(\int\exp\left(\psi(y)+h(x)\cdot(y-x) -c(x,y) \right)\dd \nu(y)\right) \in L^\infty(\mu)
\end{equation}
The first observation is that the gradient in $h$ of $F$ is Lipschitz with respect to the $L^2(\mu;\R^d)$ for perturbations which are close in $L^\infty(\mu;\R^d)$.
\begin{proposition}\label{prop:lipschitz-gradients}
Let $\xi = (\ph,\psi,h)\in \Hcal^\infty$ be such that $\ph = Z_{\psi,h}$ and $h' \in L^\infty(\mu;\R^{d})$ then
    \begin{equation*}
       \Vert \nabla_hF(Z_{\psi,h},\psi,h)-\nabla_h F(Z_{\psi,h},\psi,h')\Vert_{L^2(\mu;\R^d)} \leq D_{\nu}^2 e^{\Vert h-h'\Vert_{L^\infty(\mu;\R^) }D_{\nu}} \Vert h-h'\Vert_{L^2(\mu;\R^d)}
    \end{equation*}

\end{proposition}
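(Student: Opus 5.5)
The estimate is pointwise in $x$, so the plan is to prove a bound at each fixed $x\in X$ and then square and integrate against $\mu$. Note that $\varphi=Z_{\psi,h}$ enters both gradients, so only $h$ changes. Fix $x$ and define, for $g\in\R^d$,
\begin{equation*}
G_x(g) := \int (y-x)\exp\bigl(Z_{\psi,h}(x)+\psi(y)+g\cdot(y-x)-c(x,y)\bigr)\dd\nu(y).
\end{equation*}
Then $\nabla_h F(Z_{\psi,h},\psi,h)(x)=G_x(h(x))$ and $\nabla_h F(Z_{\psi,h},\psi,h')(x)=G_x(h'(x))$. The map $G_x$ is smooth in $g$, because $Y$ is compact and the integrand is bounded. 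So I will use the fundamental theorem of calculus along the segment $g_t=h(x)+t(h'(x)-h(x))$, $t\in[0,1]$:
\begin{equation*}
G_x(h'(x))-G_x(h(x))=\int_0^1 \int (y-x)\otimes(y-x)\, e^{u_t(x,y)}\dd\nu(y)\,(h'(x)-h(x))\dd t,
\end{equation*}
where $u_t$ is the exponent with $h$ replaced by $g_t$.

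The key step is to bound the matrix $A_t(x):=\int (y-x)\otimes(y-x)e^{u_t}\dd\nu(y)$ in operator norm. Factor $e^{u_t}=e^{u_0}\,e^{t(h'(x)-h(x))\cdot(y-x)}$. The normalization $\varphi=Z_{\psi,h}$ means exactly that $e^{u_0(x,\cdot)}\nu$ is a probability measure $p_x$. The second factor is at most $e^{\|h-h'\|_{L^\infty}|y-x|}$. For $|y-x|$ I want the bound $D_\nu$. Since $x\in X\subseteq\operatorname{conv}(Y)$ and $y\in Y$, we have $|y-x|\le\operatorname{diam}(\operatorname{conv}(Y))=D_\nu$, because a convex hull has the same diameter as the set. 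This gives
\begin{equation*}
\|A_t(x)\|_{op}\le \int |y-x|^2 e^{t\|h-h'\|_\infty|y-x|}\dd p_x(y)\le D_\nu^2 e^{\|h-h'\|_{L^\infty}D_\nu}.
\end{equation*}

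Consequently, for $\mu$-a.e.\ $x$,
\begin{equation*}
|G_x(h'(x))-G_x(h(x))|\le D_\nu^2e^{\|h-h'\|_{L^\infty}D_\nu}|h(x)-h'(x)|.
\end{equation*}
Squaring and integrating in $\mu$ yields the claim. The two points that need care are the measurability of $x\mapsto G_x$ and the differentiation under the integral. Both are routine by boundedness of all integrands and dominated convergence. The only real idea is to use the normalization $\varphi=Z_{\psi,h}$ to turn the integral at $t=0$ into a probability measure. Without it the constant would also depend on $\|u_\xi\|_\infty$. The pointwise bound $|y-x|\le D_\nu$ for $x\in X$ is the only geometric input, and it comes from non-degeneracy.
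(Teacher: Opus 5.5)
Your proof is correct and takes essentially the same route as the paper's. Both arguments use the normalization $\varphi=Z_{\psi,h}$ to make $e^{u_\xi(x,\cdot)}\nu$ a probability measure, then bound the exponential factor along the segment from $h(x)$ to $h'(x)$ using $|y-x|\le D_\nu$, and both reach the same constant. The only cosmetic difference is that the paper writes the difference as $\int (y-x)\bigl[1-e^{(h'(x)-h(x))\cdot(y-x)}\bigr]\dd\pi_x(y)$ and applies Jensen with a scalar mean-value bound, which avoids differentiating under the integral, whereas you use the fundamental theorem of calculus with the matrix $A_t(x)$.
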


\begin{proof}
    First recall the expression of the gradient for $\tilde h \in L^\infty(\mu;\R^d)$
    \begin{equation*}
        \nabla_hF(Z_{\psi,h},\psi,\tilde h)(x) = \int (y-x) \exp(Z_{\psi,h}(x)+\psi(y)+\tilde h(x)\cdot(y-x)-c(x,y))\dd \nu(y).
    \end{equation*}
    Introduce for almost every $x$ the following probability measure $\dd \pi_x(y) = \exp(Z_{\psi,h}(x)+\psi(y)+h(x)\cdot(y-x)-c(x,y))\dd \nu(y)$. Note that $\pi_x$ is supported on $Y$. Let $x \in X$ we have
    \begin{align*}
        &\Vert \nabla_hF(Z_{\psi,h},\psi,h)(x)-\nabla_h F(Z_{\psi,h} ,\psi, h')(x)\Vert^2\\
        &= \left\Vert \int (y-x)[1-\exp((h'(x)-h(x))\cdot(y-x))] \dd \pi_x(y) \right\Vert^2\\
        &\leq \int \Vert y-x \Vert^2 [1-\exp((h'(x)-h(x))\cdot(y-x))] ^2 \dd \pi_x(y)\\
        &\leq D_{\nu}^2 e^{2\Vert h-h'\Vert_{L^\infty(\mu)}D_{\nu}}D_{\nu}^2\Vert h'(x)-h(x)\Vert^2.
    \end{align*}
    The second inequality holds by Jensen's inequality and the last one by using the fact that $\pi_x$ is supported on $Y$ and by bounding the derivative of the exponential over the segment $[0,(h'(x)-h(x))\cdot(y-x)]$. Integration in the $x$ variable with respect to $\mu$ yields the result.
\end{proof}
The second observation is that the normalization ensures the boundedness of the gradients in the whole of $\Hcal^\infty$.
\begin{proposition}\label{prop:bounded-gradients}
     Let $\xi = (\ph,\psi,h)\in \Hcal^\infty$ be such that $\ph = Z_{\psi,h}$ then
    \begin{equation*}
        \Vert \nabla_h F(Z_{\psi,h},\psi,h)\Vert_{L^\infty(\mu;\R^d)} \leq D_{\nu}
    \end{equation*}
\end{proposition}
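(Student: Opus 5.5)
The approach is to recognize that the normalization $\ph = Z_{\psi,h}$ turns the $x$-conditional of the Gibbs kernel into a probability measure on $Y$. Then $\nabla_h F$ is, pointwise in $x$, a barycenter displacement, and a direct pointwise bound is enough.

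First, I fix $x \in X$ outside a $\mu$-null set. I define
\[
\dd \pi_x(y) \coloneqq \exp\bigl(Z_{\psi,h}(x)+\psi(y)+h(x)\cdot(y-x)-c(x,y)\bigr)\dd\nu(y),
\]
exactly as in the proof of \cref{prop:lipschitz-gradients}. By the definition of $Z_{\psi,h}$ in \cref{eq:normalization-constant}, $\pi_x(Y)=1$, so $\pi_x$ is a probability measure supported on $Y=\supp(\nu)$. The gradient formula from \cref{prop:well-defined} then reads
\[
\nabla_h F(Z_{\psi,h},\psi,h)(x)=\int (y-x)\dd\pi_x(y)=b_x - x,
\]
where $b_x\coloneqq\int y \dd\pi_x(y)$ is the barycenter of $\pi_x$.

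Second, I bound $\|b_x - x\|$. Since $\pi_x$ is a probability measure on the compact set $Y$, $b_x\in\operatorname{conv}(Y)$. By non-degeneracy in \cref{ass:qualification}, $x\in X\subseteq\operatorname{conv}(Y)$. Two points of $\operatorname{conv}(Y)$ are at distance at most $\operatorname{diam}(\operatorname{conv}(Y))=\operatorname{diam}(Y)=D_\nu$, since taking the convex hull and closure does not increase the diameter. Hence $\|\nabla_h F(Z_{\psi,h},\psi,h)(x)\|\le D_\nu$ for $\mu$-a.e. $x$. Taking the essential supremum gives the claim.

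Alternatively, one can use Jensen's inequality, $\|\int(y-x)\dd\pi_x\|\le\int\|y-x\|\dd\pi_x\le\sup_{y\in Y}\|y-x\|$, and then invoke the same convex hull fact. There is no real obstacle here. The only point needing care is to justify $\|y-x\|\le D_\nu$ for $x\in X$, $y\in Y$. This uses $X\subseteq\operatorname{conv}(Y)$ together with the fact that the convex hull preserves the diameter. One checks the latter via $\sup_{z\in\operatorname{conv}(Y)}\|z-y\|=\sup_{z\in Y}\|z-y\|$, by convexity of $z\mapsto\|z-y\|$, applied twice.
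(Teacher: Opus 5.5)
Your proof is correct and takes the paper's route: the normalization $\varphi = Z_{\psi,h}$ makes the Gibbs kernel in $y$ a probability measure on $Y$, and you then bound $\|y-x\|\le D_\nu$ pointwise. You also justify that bound explicitly via $X\subseteq\operatorname{conv}(Y)$ and $\operatorname{diam}(\operatorname{conv}(Y))=D_\nu$, a step the paper leaves implicit.
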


\begin{proof}
    A direct computation gives
    \begin{equation*}
        \nabla_h F(Z_{\psi,h},\psi,h)(x) =\int  (y-x) \exp( Z_{\psi,h}(x)+ \psi(y) + h(x)\cdot(y-x)-c(x,y)) \dd \nu(y).
    \end{equation*}
    Since $y-x$ is bounded by $D_{\nu}$ over $Y$ and the exponential in the integrand has integral one we can conclude.
\end{proof}

\subsection{A Pointwise Coercivity Estimate for the $h$-Gradient}
We establish a coercivity estimate for the $h$-gradient of $F$, which is crucial for proving the uniform boundedness of the iterates in \cref{prop:Linfty}. We begin by deriving an upper bound on the normalization constant $Z_{\psi,h}$ in terms of $h$ and $\|\psi\|_{L^1(\nu)}$.
\begin{lemma}\label{lem:upper-bound-zpsih}
    Let $\psi\in L^\infty(\nu),h\in L^\infty(\mu;\R^d)$. There is $\rho>0$ depending only on $\mu,\nu$ such that $\mu$ almost surely
    \begin{equation*}
        Z_{\psi,h}(x) + \rho \Vert h (x)\Vert \leq  \frac{\Vert \psi \Vert_{L^1(\nu)}+M_c}{\rho}-\log(\rho).
    \end{equation*}
\end{lemma}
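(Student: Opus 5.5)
We need to show $-Z_{\psi,h}(x) \ge \rho\|h(x)\| + \log\rho - (\|\psi\|_{L^1(\nu)}+M_c)/\rho$. By definition,
\[
-Z_{\psi,h}(x)=\log\int \exp\big(\psi(y)+h(x)\cdot(y-x)-c(x,y)\big)\dd\nu(y).
\]
The plan is to bound this integral from below by restricting it to a set of $y$ of positive $\nu$-mass on which $h(x)\cdot(y-x)\ge \rho\|h(x)\|$, and then to handle $\psi$ by Jensen's inequality.

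\textbf{Geometric step.} I will use non-degeneracy in the following form: since $X$ is compact and contained in $\mathrm{int}(\mathrm{conv}(Y))$, there is $\delta>0$ such that for every $x\in X$ and every unit vector $e$, the point $x+\delta e$ lies in $\mathrm{conv}(Y)$. Hence $\sup_{y\in Y} e\cdot(y-x)\ge\delta$. I then want a uniform mass bound
\[
\nu\big(A_{x,e}\big)\ge\rho_0>0,\qquad A_{x,e}:=\{y:\ e\cdot(y-x)\ge\delta/2\},
\]
valid uniformly in $x\in X$ and $\|e\|=1$. The argument is by compactness. The map $(x,e)\mapsto \nu(\{y: e\cdot(y-x)>\delta/2\})$ is lower semicontinuous, since it is the measure of an open set depending continuously on the parameters. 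It is also positive everywhere: the half-space $\{e\cdot(y-x)>\delta/2\}$ must contain points of $Y=\supp\nu$, for otherwise $\mathrm{conv}(Y)$ would lie in the complementary closed half-space, contradicting $x+\delta e\in\mathrm{conv}(Y)$. A positive lower semicontinuous function on the compact set $X\times\mathbb S^{d-1}$ attains a positive minimum $\rho_0$. I expect this step to be the main (if mild) obstacle, because it is where non-degeneracy and compactness genuinely enter.

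\textbf{Jensen step.} Fix $x$ with $h(x)\neq0$, set $e=h(x)/\|h(x)\|$ and $A=A_{x,e}$; the case $h(x)=0$ is identical with any choice of $e$. Then
\[
\int e^{\psi+h\cdot(y-x)-c}\dd\nu\ \ge\ e^{\delta\|h(x)\|/2-M_c}\int_A e^{\psi}\dd\nu .
\]
Writing $\nu_A:=\nu(\cdot\cap A)/\nu(A)$ and applying Jensen,
\[
\int_A e^{\psi}\dd\nu=\nu(A)\int e^{\psi}\dd\nu_A\ \ge\ \nu(A)\exp\Big(\frac{1}{\nu(A)}\int_A\psi\dd\nu\Big)\ \ge\ \rho_0\exp\big(-\|\psi\|_{L^1(\nu)}/\rho_0\big).
\]
This gives $-Z_{\psi,h}(x)\ge \frac{\delta}{2}\|h(x)\|-M_c+\log\rho_0-\|\psi\|_{L^1(\nu)}/\rho_0$.

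\textbf{Constants.} Choose $\rho:=\min(\delta/2,\rho_0,1)$, so that $\rho\le 1$. Then $\log\rho_0\ge\log\rho$, $\|\psi\|_{L^1(\nu)}/\rho_0\le\|\psi\|_{L^1(\nu)}/\rho$, and $M_c\le M_c/\rho$. Rearranging yields
\[
Z_{\psi,h}(x)+\rho\|h(x)\|\le\frac{\|\psi\|_{L^1(\nu)}+M_c}{\rho}-\log\rho
\]
for $\mu$-a.e. $x$, where we use that $x\in X$ $\mu$-a.s. and that $|c|\le M_c$ holds $\mu\otimes\nu$-a.e., hence $\nu$-a.e. in $y$ for $\mu$-a.e. $x$ by Fubini.
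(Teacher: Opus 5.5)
Your proof is correct and follows the same route as the paper. Both arguments restrict the integral defining $e^{-Z_{\psi,h}(x)}$ to a set $A$ of $\nu$-mass at least $\rho$ on which $\frac{h(x)}{\Vert h(x)\Vert}\cdot(y-x)\ge\rho$, and then apply Jensen's inequality on $A$ to control $\psi$ through $\Vert\psi\Vert_{L^1(\nu)}$. The only difference is that you prove the uniform directional mass bound yourself, using the margin $\delta$ and lower semicontinuity of $(x,e)\mapsto\nu(\{y: e\cdot(y-x)>\delta/2\})$ on the compact set $X\times\mathbb S^{d-1}$, whereas the paper imports it from \cite[Lemma~2.11]{CaMaSy26}; your version is self-contained and covers a Dirac $\mu$ without a separate remark.
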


\begin{proof}
If $\mu$ is a Dirac mass the following claim is direct since $\Sigma_\nu$ is invertible. Else by \cite[Lemma 2.11.]{CaMaSy26}, there exists a constant
\(\rho\in(0,1]\) such that, for every \(x\in X\) and every
\(e\in\mathbb S^{d-1}\), there exists a Borel set
\(A=A(x,e)\) satisfying
\begin{equation}
    \nu(A)\ge \rho>0,
    \qquad
    e\cdot(y-x)\ge \rho
    \quad\text{for every }y\in A.
\label{eq:directional_non_degeneracy}
\end{equation}

Fix $x\in X$, on $ A:=A\left (x,\frac{h(x)}{\Vert h(x)\Vert} \right)$ we have
\[
e^{-Z_{\psi,h}(x)}
=
\int 
\exp\!\left(
\psi(y)
+
 h(x)\cdot(y-x)-c(x,y)
\right)\dd \nu(y)
\ge
e^{\rho \Vert h(x) \Vert}
\int_{A} e^{\psi(y)-c(x,y)}\dd \nu(y).
\]
By Jensen's inequality,
\begin{equation*}
\int_A e^{\psi(y)-c(x,y)}\,\dd\nu \ge
\nu(A)
\exp\!\left(
\frac{1}{\nu(A)}
\int_A\psi(y)-c(x,y)\,\dd\nu
\right)
\ge
\rho
\exp\!\left(-\frac{\Vert \psi\Vert_{L^1(\nu)}+M_c}{\rho}\right).
\end{equation*}
Consequently,
\[
e^{-Z_{\psi,h}(x)}
\ge
\rho
\exp\!\left(
\rho \Vert h(x) \Vert-\frac{\Vert \psi\Vert_{L^1(\nu)}+M_c}{\rho}
\right).
\]
Taking the logarithm concludes the proof.
\end{proof}
This upper bound on the normalization constant ensures the following weak coercivity of $F$. 
\begin{proposition}\label{prop:pullback}
    Let $\psi \in L^\infty(\nu),h\in L^\infty(\mu;\R^d)$. Define $M(a,b,\rho)= a + \frac{b}{\rho}-\log(\rho)$ and $C(a,b,\rho) $ equal to
    \begin{equation*}
         \inf\left\{x >0 \,: \forall y\geq x,\,  \frac{\rho }{2}y\left(1-\exp\left(M(a,b,\rho)-\frac{\rho}{2}y\right)\right)-\exp\left(M(a,b,\rho) - \rho y\right)\geq \frac{\rho}{4} y \right\},
    \end{equation*}
    which is finite for $\rho > 0$.
    Then, 
        $\Vert h(x) \Vert \geq C(\Vert \psi^+\Vert_{L^\infty(\nu)}+M_c,\Vert \psi\Vert_{L^1(\nu)}+M_c,\rho)$ implies
     \begin{equation}\label{eq:inequality_PL}
          h(x)\cdot \nabla_hF(Z_{\psi,h},\psi,h)(x)\geq \frac{\rho}{4} \Vert h(x) \Vert.
    \end{equation}
\end{proposition}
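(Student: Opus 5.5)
Fix $x$ with $h(x)\neq 0$, write $e:=h(x)/\Vert h(x)\Vert$ and $t:=\Vert h(x)\Vert$. Let $\pi_x$ be the probability measure $\dd\pi_x(y)=\exp(Z_{\psi,h}(x)+\psi(y)+h(x)\cdot(y-x)-c(x,y))\dd\nu(y)$, as in the proof of \cref{prop:lipschitz-gradients}. Then
\begin{equation*}
h(x)\cdot\nabla_hF(Z_{\psi,h},\psi,h)(x)=t\int e\cdot(y-x)\,\dd\pi_x(y).
\end{equation*}
So it suffices to show $\int e\cdot(y-x)\dd\pi_x(y)\ge \rho/4$ for $t$ large. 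The idea is that the density of $\pi_x$ w.r.t.\ $\nu$ is $e^{Z+\psi-c}e^{t\,e\cdot(y-x)}$. The factor $e^{Z_{\psi,h}(x)}$ is exponentially small in $t$ by \cref{lem:upper-bound-zpsih}, while $e^{\psi-c}\le e^{a}$ with $a=\Vert\psi^+\Vert_\infty+M_c$. Hence the mass of $\pi_x$ on the region where $e\cdot(y-x)$ is small is negligible.

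\textbf{Step 1 (bound on $Z$).} By \cref{lem:upper-bound-zpsih} with $b=\Vert\psi\Vert_{L^1(\nu)}+M_c$, we have $Z_{\psi,h}(x)\le b/\rho-\log\rho-\rho t$. Therefore
\begin{equation*}
\frac{\dd\pi_x}{\dd\nu}(y)\le \exp\big(M(a,b,\rho)-\rho t+t\,e\cdot(y-x)\big).
\end{equation*}

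\textbf{Step 2 (split the integral).} Split $Y$ into three regions: $B_-=\{e\cdot(y-x)<0\}$, $B_0=\{0\le e\cdot(y-x)<\rho/2\}$, and $B_+=\{e\cdot(y-x)\ge\rho/2\}$.

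On $B_0$ the density is at most $\exp(M-\frac{\rho}{2}t)$. Hence $\pi_x(B_0\cup B_-)\le \exp(M-\frac{\rho}{2}t)$, since on $B_-$ the density is even smaller. This gives $\pi_x(B_+)\ge 1-\exp(M-\frac\rho2 t)$, so
\begin{equation*}
\int_{B_+}e\cdot(y-x)\,\dd\pi_x\ge\frac\rho2\Big(1-e^{M-\frac\rho2 t}\Big).
\end{equation*}

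On $B_-$, write $s=-e\cdot(y-x)>0$. The negative contribution is $\int_{B_-} s\,e^{M-\rho t-ts}\dd\nu\le e^{M-\rho t}\sup_{s>0}se^{-ts}=e^{M-\rho t}/(et)$. This is at most $e^{M-\rho t}/t$ (for $t\ge1$ at least; adjust as needed). The contribution of $B_0$ is nonnegative.

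Multiplying by $t$, we get
\begin{equation*}
h(x)\cdot\nabla_hF\ge \frac\rho2 t\Big(1-e^{M-\frac\rho2t}\Big)-e^{M-\rho t},
\end{equation*}
which is precisely the expression in the definition of $C(a,b,\rho)$. So for $t\ge C$ it is $\ge\frac\rho4 t$.

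\textbf{Step 3 (finiteness of $C$).} As $y\to\infty$, the left side divided by $y$ tends to $\rho/2>\rho/4$. Hence the condition holds for all large $y$ and the infimum is finite.

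\textbf{Main obstacle.} The main point is the bookkeeping in Step 2: the bound on $B_-$ must be done via $\sup_s se^{-ts}$ so that it matches exactly the term $e^{M-\rho y}$ in the definition of $C$. If the constant from that supremum does not match for small $t$, I would instead bound $s\le D_\nu$ and absorb the resulting constant, adjusting $M$ if necessary.
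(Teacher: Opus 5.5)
Your proof is correct and follows essentially the same route as the paper. You use \cref{lem:upper-bound-zpsih} to bound the $\pi_x$-mass below the threshold $\frac{\rho}{2}\Vert h(x)\Vert$ and to control the negative part, which yields exactly the function in the definition of $C(a,b,\rho)$; the only difference is that the paper bounds the negative part with the layer-cake integral $\int_{-\infty}^0\pi_x(A_t)\,\dd t$ instead of your pointwise bound via $\sup_{s>0}se^{-ts}$. Your hedge in Step 2 is unnecessary: $\sup_{s>0}se^{-ts}=1/(\mathrm{e}\,t)$ gives, after multiplying by $t$, the bound $e^{M-\rho t}/\mathrm{e}\le e^{M-\rho t}$ for every $t>0$, so you do not need to adjust $M$.
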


\begin{proof}
    Set $\dd \pi_x(y) = \exp(Z_{\psi,h}(x)+\psi(y)+h(x) \cdot (y-x)-c(x,y))\dd \nu(y)$. 
    By definition of the gradient of $F$ we have
    \begin{equation*}
        h(x) \cdot \nabla_hF(Z_{\psi,h},\psi,h)(x) = \int h(x) \cdot (y-x)  \dd \pi_x(y).
    \end{equation*}
 Let $A_t=\{y : h(x)\cdot(y-x) \leq t\}$ and observe that for $t\in \R$  we have
    \begin{equation}
        \pi_x(A_t) \leq \exp(t + \Vert \psi^+\Vert_{L^\infty(\nu)} +M_c+Z_{\psi,h}(x)).
    \end{equation}
    Setting $M(a,b,\rho) : = a + \frac{b}{\rho}-\log(\rho)$, \cref{lem:upper-bound-zpsih} ensures
    \begin{equation}\label{eq:control-repartition-func}
        \pi_x(A_t) \leq \exp(t +M(\Vert\psi^+\Vert_{L^\infty(\nu)}+M_c,\Vert \psi\Vert_{L^1(\nu)}+M_c,\rho)-\rho \Vert h(x)\Vert).
    \end{equation}
    Thus by integration by parts we can control the negative part of the integral
    \begin{equation}\label{eq:control-neg-part}
    \begin{aligned}
        \int  &1_{h(x)\cdot(y-x)\leq 0}h(x)\cdot(y-x) \dd \pi_x(y) \geq -\int_{-\infty}^0 \pi_x(A_t)\dd t\\
        &\geq -\exp\left( M(\Vert\psi^+\Vert_{L^\infty(\nu)}+M_c,\Vert \psi\Vert_{L^1(\nu)}+M_c,\rho)- \rho \Vert h(x) \Vert\right)
    \end{aligned}
    \end{equation}
    where we used \cref{lem:upper-bound-zpsih} for the last inequality. On the other hand, from \cref{eq:control-repartition-func} for $t \geq 0$ 
    we get
    \begin{equation}\label{eq:control-pos-part}
    \begin{aligned}
        \int  1_{h(x)\cdot(y-x)\geq 0} h(x)\cdot(y-x) \dd \pi_x(y) &\geq  \int  1_{h(x)\cdot(y-x)\geq t} h(x)\cdot(y-x) \dd \pi_x(y)\\
        &\geq t\pi_x(\{h(x)\cdot(y-x)\geq t\})
        \geq t(1-\pi_x(A_t)).
    \end{aligned}
    \end{equation}
    In particular taking $t=\frac{\rho}{2}\Vert h(x)\Vert$ and combining \cref{eq:control-neg-part,eq:control-pos-part,eq:control-repartition-func} we get
    \begin{align*}
    h(x)\cdot \nabla_hF(Z_{\psi,h},\psi,h)&(x)
    \geq \\
    &\frac{\rho}{2}\Vert h(x)\Vert
    \Bigl(1-\exp\Bigl(
    M(\Vert\psi^+\Vert_{L^\infty(\nu)}+M_c,
    \Vert\psi\Vert_{L^1(\nu)}+M_c,\rho)
    -\frac{\rho}{2}\Vert h(x)\Vert\Bigr)\Bigr) \\
    &-\exp\Bigl(
    M(\Vert\psi^+\Vert_{L^\infty(\nu)}+M_c,
    \Vert\psi\Vert_{L^1(\nu)}+M_c,\rho)
    -\rho\Vert h(x)\Vert\Bigr).
    \end{align*}
    Take $C(a,b,\rho)$ as in the statement.
    The constant is not $+\infty$ because $\rho > 0$ and the left hand side of the set defining inequality is equivalent to $\frac{\rho}{2}y$ as $y \to +\infty$. Finally, by definition of $C(a,b,\rho)$ we have 
    \begin{equation*}
        \Vert h(x) \Vert \geq C(\Vert \psi^+\Vert_{L^\infty(\nu)}+M_c,\Vert \psi\Vert_{L^1(\nu)}+M_c,\rho) \implies h(x)\cdot \nabla_hF(Z_{\psi,h},\psi,h)(x)\geq \frac{\rho}{4} \Vert h(x) \Vert.
    \end{equation*}
\end{proof}
\begin{remark}\label{rem:PL_ball}
    Note that $(a,b) \mapsto C(a,b,\rho)$ is non-decreasing. Indeed for $y$ fixed the function involved in the defining inequality is non-increasing in $(a,b)$, because it is non-increasing  in $M(a,b,\rho)$ and $M(a,b,\rho)$ is non-decreasing in $(a,b)$. Thus if $a \ge \Vert \psi^+\Vert_{L^\infty(\nu)}+M_c$, $b \ge \Vert \psi\Vert_{L^1(\nu)}+M_c$ and $\Vert h(x) \Vert \geq C(a,b,\rho)$, then \cref{eq:inequality_PL} still holds.
\end{remark}

\subsection{Strong Convexity and Smoothness}
In this section we show that on subsets of $\Hcal^\infty$ the function $F$ is  strongly convex on the directions orthogonal to its invariance set:
\begin{equation*}
    \Acal
:=
\left\{
(-a\cdot x+b,\,a\cdot y-b,\,-a):
a\in\R^d,\ b\in\R
\right\}
\subset\Hcal^\infty,
\end{equation*}
smooth and satisfies a Polyak--{\L}ojasiewicz inequality. To make these properties invariant by renormalization we introduce for $M<+\infty$ the set
\begin{equation}\label{eq:KM}
    K_M:= \{\xi \in \Hcal \mid \Vert u_\xi\Vert_{L^\infty(\mu \otimes\nu)} \leq M\}.
\end{equation}

The following lemma states some properties of $K_M$.

\begin{lemma}\label{lem:prop_KM}
Let $M \in \R$ and $K_M$ defined by \cref{eq:KM}:
    \begin{enumerate}
        \item $K_M$ is a closed convex subset of $\mathcal H$ on which $F$ is finite;
        \item $K_M \subset \mathcal H^\infty$.
    \end{enumerate}
\end{lemma}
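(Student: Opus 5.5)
We treat the three claims separately: convexity, closedness, and the inclusion $K_M\subset\Hcal^\infty$ (which also gives finiteness of $F$).

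\textbf{Convexity.} The map $\xi\mapsto u_\xi$ is affine from $\Hcal$ into functions on $X\times Y$: $u_\xi=L\xi - c$ with $L$ linear. Hence $\Vert u_{t\xi+(1-t)\xi'}\Vert_\infty \le t\Vert u_\xi\Vert_\infty+(1-t)\Vert u_{\xi'}\Vert_\infty\le M$, so $K_M$ is convex.

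\textbf{Closedness.} Take $\xi_k\to\xi$ in $\Hcal$. Then $\varphi_k\to\varphi$ in $L^2(\mu)$, $\psi_k\to\psi$ in $L^2(\nu)$ and $h_k\to h$ in $L^2(\mu;\R^d)$. Since $\Vert y-x\Vert\le D_\nu$ on $X\times Y$ (as $X\subseteq \mathrm{conv}(Y)$), we get $u_{\xi_k}\to u_\xi$ in $L^2(\mu\otimes\nu)$. Extracting an a.e.\ convergent subsequence passes the bound $|u_{\xi_k}|\le M$ to the limit, so $\xi\in K_M$.

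\textbf{Inclusion in $\Hcal^\infty$.} This is the main step. Fix $\xi\in K_M$, so $|\varphi(x)+\psi(y)+h(x)\cdot(y-x)|\le M+M_c$ for $\mu\otimes\nu$-a.e.\ $(x,y)$.

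First, bound $h$. Apply the inequality at two points $y,y'$ with the same $x$ (Fubini gives, for $\mu$-a.e.\ $x$, a $\nu$-full set of admissible $y$). Subtracting yields $|\psi(y)-\psi(y')+h(x)\cdot(y-y')|\le 2(M+M_c)$. To eliminate $\psi$, compare two values $x,x'$ and use a common pair $y,y'$. This gives
\[
|(h(x)-h(x'))\cdot(y-y')|\le 4(M+M_c)
\]
for $\nu\otimes\nu$-a.e.\ $(y,y')$.

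Because $\mathrm{int}(\mathrm{conv}(Y))\neq\emptyset$, the differences $y-y'$ over $\nu$-full sets span all directions with a uniform lower bound. Concretely, for each unit $e$ we have $\nu\otimes\nu\big((e\cdot(y-y'))\ge\delta\big)>0$, and this is uniform in $e$ by compactness, using $\lambda_\nu>0$. Therefore $\Vert h(x)-h(x')\Vert\le 4(M+M_c)/\delta$, so $h$ is bounded up to a constant: $h-a\in L^\infty$ for some $a\in\R^d$.

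Second, bound $\varphi$ and $\psi$. Use the gauge direction: $\xi'=\xi+(-a\cdot x,\ a\cdot y,\ -a)$ satisfies $u_{\xi'}=u_\xi$, since the correction terms cancel. Thus we may assume $h\in L^\infty$. Then $\varphi(x)+\psi(y)$ is essentially bounded on $X\times Y$. Fixing a good $y_0$ bounds $\varphi$ up to a constant, and symmetrically for $\psi$; splitting the constant shows both lie in $L^\infty$.

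This yields a bounded representative in the gauge class. However, $a\cdot x$ and $a\cdot y$ are bounded on compact supports, so the original $\xi$ is itself bounded. Hence $\xi\in\Hcal^\infty$.

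\textbf{Finiteness of $F$.} On $K_M$ we have $\int e^{u_\xi}\le e^M$, and $\varphi\in L^1(\mu)$, $\psi\in L^1(\nu)$, so $F$ is finite.

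The delicate points are the Fubini and a.e.\ bookkeeping, and the uniform directional spreading of $\nu$. For the latter we would invoke \eqref{eq:directional_non_degeneracy}: pairing two sets $A(x,e)$ and $A(x,-e)$ gives $e\cdot(y-y')\ge 2\rho$ on a set of $\nu\otimes\nu$-measure at least $\rho^2$, so one may take $\delta=2\rho$.
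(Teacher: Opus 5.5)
Your proof is correct. Convexity, closedness and finiteness of $F$ are argued exactly as in the paper, but the inclusion $K_M\subset\Hcal^\infty$ takes a different route.

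\textbf{The paper's route.} The paper never eliminates $\psi$ pointwise. It tests the identity $u_\xi+c=\varphi(x)+\psi(y)+h(x)\cdot(y-x)$ against $y-m_\nu$ in $L^2(\nu)$. This kills $\varphi(x)$ and produces $\Sigma_\nu h(x)$, giving directly $\lambda_\nu\Vert h(x)\Vert\le D_\nu\bigl(M+M_c+\Vert\psi\Vert_{L^1(\nu)}\bigr)$ for $\mu$-a.e.\ $x$. Integrating the identity against $\nu$ and against $\mu$ then bounds $\varphi$ and $\psi$. That argument works on the given representative and uses only $\lambda_\nu>0$ and $\psi\in L^1(\nu)$. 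It needs neither gauge fixing nor the four-variable Fubini bookkeeping.

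\textbf{Your route.} Your double-difference argument removes $\varphi$ and $\psi$ simultaneously. It yields the $\Acal$-invariant oscillation bound $\Vert h(x)-h(x')\Vert\le 4(M+M_c)/\delta$. After fixing the gauge, you get a representative of $\xi+\Acal$ whose $\Hcal^\infty$-norm is controlled by $M$, $M_c$ and the geometry of $\nu$ alone. This is a slightly stronger statement than the paper records. Note, however, that subtracting the paper's identity at $x$ and $x'$ gives the analogous bound $\lambda_\nu\Vert h(x)-h(x')\Vert\le 2D_\nu(M+M_c)$ at no extra cost.

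\textbf{The step you only sketch.} The uniform directional spreading of $\nu$ can be made self-contained without compactness or \eqref{eq:directional_non_degeneracy}. Take $y,y'$ independent with law $\nu$ and set $Z=e\cdot(y-y')$. Then $\int Z^2\dd\nu\otimes\nu=2\,e\cdot\Sigma_\nu e\ge2\lambda_\nu$ and $\vert Z\vert\le D_\nu$. By symmetry of $Z$, this gives $\nu\otimes\nu(Z\ge\delta)\ge(2\lambda_\nu-\delta^2)/(2D_\nu^2)>0$ for every $\delta<\sqrt{2\lambda_\nu}$, uniformly in $e$.
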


\begin{proof}
    We start by the proof of the first point. Let $T : \mathcal H \to L^2(\mu \otimes \nu)$ be defined by $T(\xi) = u_\xi$. We have $K_M = T^{-1}(B_M)$ where $B_M = \{u \in L^2(\mu \otimes \nu)\,  ; \, || u ||_{L^\infty(\mu \otimes \nu)} \le M\}$. As $B_M$ is convex and $T$ is affine, $K_M$ is convex. Moreover, for all $\eta, \xi \in \mathcal H$, $||T(\xi)-T(\eta)||_{L^2(\mu \otimes \nu)} \le 3\max(1,D_{\nu})||\xi-\eta||_{\mathcal H}$, which proves that $T$ is continuous. As $L^2(\mu \otimes \nu)$ convergence yields $\mu \otimes \nu$-a.e. convergence along a subsequence, $B_M$ is a closed subset of $L^2(\mu \otimes \nu)$. Together with the continuity of $T$, this proves that $K_M$ is closed. The fact that $F$ is finite follows directly by definition of $K_M$: for all $\xi \in K_M$, $|F(\xi)| \leq e^M + ||\varphi||_{L^2(\mu)} + ||\psi||_{L^2(\nu)}.$ 
    We now prove the second point. Let $\xi \in K_M$. Since \(c\in L^\infty(\mu\otimes\nu)\), we have \(\|u_\xi+c\|_{L^\infty(\mu\otimes\nu)}\leq M+M_c\). Multiplying the equality
    \begin{equation}\label{eq:formule_u}
        u_\xi(x,y) +c(x,y) = \varphi(x) + \psi(y) + h(x)\cdot (y-x),
    \end{equation}
    by $(y-m_\nu)$, where $m_\nu$ is the barycenter of $\nu$, and integrating against $\nu$ yields
    $$\int (u_\xi(x,y)+c(x,y))(y-m_\nu) \dd \nu(y) = \int \psi(y) (y-m_\nu) \dd \nu(y) + \Sigma_\nu  h(x).$$ 
    As for $\mu$-a.e $x \in \R^d$, $|u_\xi(x,y)| \le M$ for $\nu$-a.e. $y \in \R^d$, it holds
    \begin{align*}
        \lambda_\nu \|h(x)\| &\le ||\Sigma_\nu h(x)|| \\ &\le \left| \int (u_\xi(x,y)+c(x,y))(y-m_\nu) \dd \nu(y) \right| + \left| \int \psi(y) (y-m_\nu) \dd \nu(y) \right|\\
        &\le (M+M_c) D_{\nu} +  D_\nu ||\psi||_{L^1(\nu)}.
    \end{align*}
Here we use $\sup_{y\in Y}|y-m_\nu|\leq D_\nu$, since $m_\nu\in\operatorname{conv}(Y)$, and $\lambda_\nu$ denotes the smallest eigenvalue of $\Sigma_\nu$. By \cref{ass:qualification}, $\lambda_\nu$ is positive. This shows  $h \in L^\infty(\mu;\R^d)$. Integrating \cref{eq:formule_u} against $\nu$ yields 
    \begin{equation}
        \int (u_\xi(x,y)+c(x,y)) \dd \nu(y) = \varphi(x) + m_\psi + h(x)\cdot(m_{\nu}-x),
    \end{equation}
    where $m_\psi = \int \psi \dd\nu$. By Cauchy--Schwarz and the triangle inequalities, we get
    \begin{equation}
        |\varphi(x)| \le M +M_c+ |m_\psi| +\Vert h(x)\Vert D_{\nu}. 
    \end{equation}
    As $h \in L^\infty(\mu;\R^d)$, $\varphi \in L^\infty(\mu).$ Similarly, integrating \cref{eq:formule_u} against $\mu$ yields $\psi \in L^\infty(\nu).$
\end{proof}

We first prove the strong convexity orthogonally to the invariance set.
\begin{lemma}\label{prop:strong-convexity}
    Let $M<+\infty$, $\xi \in K_M$ and $\delta \xi \in \mathcal H^\infty$. Then 
    $$D^2F(\xi)[\delta\xi,\delta\xi]
\ge
C_{\mathrm{sc}}(M)\,\|\delta\xi\|_{\mathrm{quot}}^2,
\qquad
C_{\mathrm{sc}}(M):=e^{-M}\kappa,$$
where $\kappa = \min\left\{
\frac12,
\frac{\lambda_\nu}{1+2D_\mu^2}
\right\}$ and $\Vert \xi \Vert_\mathrm{quot} = \inf_{l \in \Acal}\Vert \xi + l \Vert_{\Hcal}$.
\end{lemma}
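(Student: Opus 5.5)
Write $\delta\xi=(\delta\ph,\delta\psi,\delta h)$ and $\delta u(x,y):=\delta\ph(x)+\delta\psi(y)+\delta h(x)\cdot(y-x)$. Differentiating twice under the integral gives
\begin{equation*}
D^2F(\xi)[\delta\xi,\delta\xi]=\int e^{u_\xi}\,\delta u^2\,\dd\mu\otimes\nu .
\end{equation*}
Since $\xi\in K_M$, we have $e^{u_\xi}\ge e^{-M}$ $\mu\otimes\nu$-a.e. Hence $D^2F(\xi)[\delta\xi,\delta\xi]\ge e^{-M}\|\delta u\|_{L^2(\mu\otimes\nu)}^2$, and it remains to prove the purely geometric inequality
\begin{equation*}
\|\delta u\|_{L^2(\mu\otimes\nu)}^2\ \ge\ \kappa\,\|\delta\xi\|_{\mathrm{quot}}^2 .
\end{equation*}
Both sides are invariant under adding elements of $\Acal$: $\delta u$ does not change, because $-a\cdot x+b+a\cdot y-b-a\cdot(y-x)=0$. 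So we may first choose a convenient representative and then bound its $\Hcal$-norm.

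To decompose $\delta u$, we rewrite it as $\delta u=\alpha(x)+\delta\psi(y)+\delta h(x)\cdot y$ with $\alpha:=\delta\ph-\delta h\cdot x$. We use the gauge to normalize $\int\delta h\,\dd\mu=0$, taking $a=\int\delta h\,\dd\mu$, and $\int\delta\psi\,\dd\nu=0$ via $b$. For fixed $x$, we project $y\mapsto\delta u(x,y)$ in $L^2(\nu)$ onto the constants and onto the centered linear functions $y-m_\nu$. By Bessel's inequality,
\begin{equation*}
\int \delta u(x,y)^2\,\dd\nu(y)\ \ge\ \Big(\int\delta u(x,\cdot)\,\dd\nu\Big)^2+\big\|\Sigma_\nu^{-1/2}\textstyle\int\delta u(x,y)(y-m_\nu)\,\dd\nu(y)\big\|^2 .
\end{equation*}
This inequality alone is not sufficient, since $\delta\psi$ also contributes to the linear part. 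A cleaner way is to expand directly:
\begin{equation*}
\int\delta u^2\,\dd\mu\otimes\nu=\|\alpha+\delta h\cdot m_\nu\|_{L^2(\mu)}^2+\int\big(\delta\psi(y)+\delta h(x)\cdot(y-m_\nu)\big)^2\dd\mu\otimes\nu .
\end{equation*}
The cross term between the $x$-only part and the centered $y$-part vanishes because $\int\delta\psi\,\dd\nu=0$ and $\int(y-m_\nu)\,\dd\nu=0$. In the second integral, the cross term $2\int\delta\psi(y)\,\delta h(x)\cdot(y-m_\nu)$ vanishes after integrating in $x$, because $\int\delta h\,\dd\mu=0$. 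We obtain
\begin{equation*}
\|\delta u\|^2=\|\alpha+\delta h\cdot m_\nu\|_{L^2(\mu)}^2+\|\delta\psi\|_{L^2(\nu)}^2+\int\delta h\cdot\Sigma_\nu\delta h\,\dd\mu ,
\end{equation*}
and therefore $\|\delta u\|^2\ge\|\delta\ph+\delta h\cdot(m_\nu-x)\|^2+\|\delta\psi\|^2+\lambda_\nu\|\delta h\|^2$.

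It remains to recover $\|\delta\ph\|^2$. We write $\delta\ph=[\delta\ph+\delta h\cdot(m_\nu-x)]-\delta h\cdot(m_\nu-x)$ and apply $(p+q)^2\le2p^2+2q^2$, which gives
\begin{equation*}
\|\delta\ph\|^2\le 2\|\delta\ph+\delta h\cdot(m_\nu-x)\|^2+2\,\|m_\nu-x\|_\infty^2\,\|\delta h\|^2 .
\end{equation*}
This is the main point where care is needed, because of the diameter constant. The statement features $D_\mu$, which suggests recentering at the barycenter of $\mu$; by the convex order, this barycenter equals $m_\nu$, and $\|x-m_\nu\|\le D_\mu$ on $X$ since $m_\mu\in\operatorname{conv}(X)$. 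We thus get $\|\delta\ph\|^2\le 2A+2D_\mu^2 H$, where $A:=\|\delta\ph+\delta h\cdot(m_\nu-x)\|^2$ and $H:=\|\delta h\|^2$. Then
\begin{equation*}
\|\delta\xi\|_{\Hcal}^2\le 2A+(1+2D_\mu^2)H+\|\delta\psi\|^2\le \kappa^{-1}\|\delta u\|^2 .
\end{equation*}
The last inequality follows from $2A\le\kappa^{-1}A$, $\|\delta\psi\|^2\le\kappa^{-1}\|\delta\psi\|^2$ and $(1+2D_\mu^2)H\le\kappa^{-1}\lambda_\nu H$, all of which hold by the choice of $\kappa$. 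Finally, since $\|\delta\xi\|_{\mathrm{quot}}$ is at most the $\Hcal$-norm of this normalized representative, the inequality $\|\delta u\|^2\ge\kappa\|\delta\xi\|_{\mathrm{quot}}^2$ follows, and multiplying by $e^{-M}$ concludes the proof.
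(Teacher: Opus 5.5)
Your argument is correct and has the same skeleton as the paper's proof. You bound $e^{u_\xi}$ below by $e^{-M}$, pass to a representative of $\delta\xi$ modulo $\Acal$, split $\delta u$ into three mutually orthogonal pieces, and recover $\|\delta\ph\|$ from $\|\delta\ph+\delta h\cdot(m_\nu-x)\|$ using $|x-m_\nu|\le D_\mu$, which produces the same $\kappa$.

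Where you differ is the choice of gauge. The paper subtracts from $\delta\psi$ its $L^2(\nu)$-orthogonal projection $\ell(y)=a\cdot y+b$ onto affine functions and works with $(\delta\ph+\ell,\delta\psi-\ell,\delta h+a)$. Because $\delta\psi-\ell$ is orthogonal to every affine function of $y$, the splitting of $y\mapsto\delta u(x,y)$ into $\delta\psi-\ell$, a constant, and $(\delta h(x)+a)\cdot(y-m_\nu)$ is orthogonal in $L^2(\nu)$ for each fixed $x$, with no constraint on $\delta h$. You instead center $\delta h$ under $\mu$ and $\delta\psi$ under $\nu$, the same normalization the paper later imposes on the SISTA iterates. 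This avoids computing the affine projection and $\Sigma_\nu^{-1}$. The price is that the cross term $2\int\delta\psi(y)\,\delta h(x)\cdot(y-m_\nu)\,\dd\mu\otimes\nu(x,y)$ only vanishes after integrating in $x$, via $\int\delta h\,\dd\mu=0$. Since the weight has already been replaced by the constant $e^{-M}$, this costs nothing here.

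Two small remarks. The Bessel-inequality paragraph is an abandoned detour and can simply be deleted. Your justification of $|x-m_\nu|\le D_\mu$ on $X$ (barycenters agree by the convex order, and $m_\mu\in\operatorname{conv}(X)$) spells out a step the paper uses without comment.
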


\begin{proof}
 Let  $\delta\xi=(\delta\varphi,\delta\psi,\delta h) \in \Hcal^\infty$, the Hessian quadratic form of $F$ is
\[
D^2F(\xi)[\delta\xi,\delta\xi]
=
\int
e^{u_\xi(x,y)}
\left|
\delta\varphi(x)
+
\delta\psi(y)
+
\delta h(x)\cdot(y-x)
\right|^2
\dd\mu\otimes \nu(x,y).
\]
Since $\xi\in K_M$, we have
\[
e^{u_\xi(x,y)}\geq e^{-M}>0.
\]
It is therefore sufficient to establish coercivity modulo $\Acal$ of the quadratic form
\[
Q(\delta\xi)
:=
\int
\left|
\delta\varphi(x)
+
\delta\psi(y)
+
\delta h(x)\cdot(y-x)
\right|^2
\dd\mu\otimes \nu(x,y).
\]
Set $v_{\delta \xi}(x,y) = \delta\varphi(x) + \delta\psi(y) + \delta h(x)\cdot(y-x)$. Let $\ell(y)\ :=(y\mapsto a\cdot y +b)\in L^2(\nu)\cap L^2(\mu)$ be the orthogonal projection of $\delta \psi$ on the space of affine functions. A direct computation gives
\begin{equation*}
      a = \Sigma_\nu^{-1} \int  (y-m_\nu) \delta \psi(y) \dd \nu(y),\quad b =  \int  \delta \psi(y) \dd \nu(y)- a\cdot m_\nu.
\end{equation*}

Fix $x$, by orthogonal projection we have 
\begin{align*}
    \int  \vert v_{\delta \xi}(x,y) \vert^2 \dd \nu(y) & = \Vert \delta \psi - \ell \Vert_{L^2(\nu)}^2 + \int  \vert \delta \ph(x) + \delta h(x)\cdot(y-x) + \ell(y) \vert^2 \dd \nu(y)\\
    &=\Vert \delta \psi - \ell \Vert_{L^2(\nu)}^2 + \vert \delta \ph(x)+ b + a \cdot m_\nu+\delta h(x) \cdot(m_\nu-x)\vert^2  \\ & \quad+  (\delta h(x) + a)^\top \Sigma_\nu (\delta h(x) + a).
\end{align*}
By definition of $\lambda_\nu$ we have $ (\delta h(x) + a)^\top \Sigma_\nu (\delta h(x) + a) \geq \lambda_\nu \Vert \delta h(x) + a\Vert^2$. Integration with respect to $x$ yields
\begin{equation*}
    Q(\delta \xi) \geq \Vert \delta \psi - \ell \Vert_{L^2(\nu)}^2 + \Vert \delta \ph+ b + a \cdot m_\nu +\delta h\cdot(m_\nu-\mathrm{Id})\Vert^2_{L^2(\mu)} + \lambda_\nu\Vert\delta h + a\Vert^2_{L^2(\mu;\R^d)}.
\end{equation*}
Define $\widehat{\delta\xi}\in \Hcal$ satisfying $\widehat{\delta\xi}-\delta\xi \in \Acal$ (so that $\Vert \widehat{\delta\xi}\Vert_{\mathrm{quot}} = \Vert \delta\xi\Vert_{\mathrm{quot}}$) by
$
    \widehat{\delta\xi} := (\delta \ph +\ell,\delta \psi-\ell,\delta h + a).
$
Note that \(\delta \ph+ b + a \cdot m_\nu +\delta h\cdot(m_\nu-\mathrm{Id}) = \delta \ph+ b + a \cdot \Id +(\delta h+a)\cdot(m_\nu-\mathrm{Id})\), thus we have
\begin{equation*}
     Q(\delta \xi) \geq \Vert \widehat{\delta \psi} \Vert_{L^2(\nu)}^2 + \Vert \widehat{\delta \ph} +\widehat{\delta h}\cdot(m_\nu-\mathrm{Id})\Vert^2_{L^2(\mu)} + \lambda_\nu\Vert\widehat{\delta h} \Vert^2_{L^2(\mu;\R^d)}.
\end{equation*}
Observe that
\[
\|\widehat{\delta\varphi}\|_{L^2(\mu)}^2
\le
2\| \widehat{\delta \ph} +\widehat{\delta  h}\cdot(m_\nu-\mathrm{Id})\|_{L^2(\mu)}^2
+
2D_\mu^2
\|\widehat{\delta h}\|_{L^2(\mu;\R^d)}^2,
\]
and therefore
\[
\|\widehat{\delta\xi}\|_{\Hcal}^2
\le
2\|\widehat{\delta \ph} +\widehat{\delta  h}\cdot(m_\nu-\mathrm{Id})\|_{L^2(\mu)}^2
+
\|\widehat{\delta\psi}\|_{L^2(\nu)}^2
+
(1+2D_\mu^2)
\|\widehat{\delta h}\|_{L^2(\mu;\R^d)}^2.
\]
Thus
\[
Q(\delta\xi)
\ge
\kappa\|\widehat{\delta\xi}\|_{\Hcal}^2,
\qquad
\kappa
:=
\min\left\{
\frac12,
\frac{\lambda_\nu}{1+2D_\mu^2}
\right\}.
\]
Since
$
\|\delta\xi\|_{\mathrm{quot}}
\le
\|\widehat{\delta\xi}\|_{\Hcal},
$
we conclude that
\[
D^2F(\xi)[\delta\xi,\delta\xi]
\ge
e^{-M}Q(\delta\xi)
\ge
e^{-M}\kappa\|\delta\xi\|_{\mathrm{quot}}^2.
\]
\end{proof}
Using this quantitative convexity estimate we show that $F$ satisfies a Polyak--{\L}ojasiewicz inequality. 

\begin{proposition}\label{prop:pli}
    Let $M<+\infty$. Then, $\forall \xi\in K_M$,
\begin{equation}
F(\xi)-\inf_{\eta \in K_M} F(\eta)
\leq
\frac{1}{2\kappa e^{-M}}
\|\nabla F(\xi)\|_{\Hcal}^2.
\label{eq:PL_quotient}
\end{equation}
\end{proposition}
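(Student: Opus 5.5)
The plan is the standard derivation of a Polyak--{\L}ojasiewicz inequality from strong convexity, carried out along segments inside the convex set $K_M$ and modulo the invariance set $\Acal$. Fix $\xi\in K_M$ and any $\eta\in K_M$. By \cref{lem:prop_KM}, $K_M$ is convex and contained in $\Hcal^\infty$. Hence the segment $\xi_t:=\xi+t(\eta-\xi)$, $t\in[0,1]$, stays in $K_M\subset\Hcal^\infty$, and $\delta\xi:=\eta-\xi\in\Hcal^\infty$. Since $F$ is twice Fréchet differentiable on $\Hcal^\infty$ (\cref{prop:well-defined}), Taylor's formula with integral remainder gives
\begin{equation*}
F(\eta)=F(\xi)+\langle\nabla F(\xi),\delta\xi\rangle_{\Hcal}+\int_0^1(1-t)\,D^2F(\xi_t)[\delta\xi,\delta\xi]\dd t .
\end{equation*}
Here we use that $\nabla F(\xi)\in\Hcal^\infty\subset\Hcal$, so the first-order term is the $\Hcal$ inner product. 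Applying \cref{prop:strong-convexity} at each $\xi_t\in K_M$ yields
\begin{equation*}
F(\eta)\ge F(\xi)+\langle\nabla F(\xi),\delta\xi\rangle_{\Hcal}+\tfrac{\kappa e^{-M}}{2}\|\delta\xi\|_{\mathrm{quot}}^2 .
\end{equation*}

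The next step removes the gauge from the linear term. For $l\in\Acal$ the function $F$ is invariant, $F(\xi+sl)=F(\xi)$, so differentiating at $s=0$ gives $\langle\nabla F(\xi),l\rangle_{\Hcal}=0$. Note that $l\in\Hcal^\infty$ by compactness, so this derivative is legitimate. Consequently $\langle\nabla F(\xi),\delta\xi\rangle=\langle\nabla F(\xi),\delta\xi+l\rangle$ for every $l\in\Acal$. By Cauchy--Schwarz this is at least $-\|\nabla F(\xi)\|_{\Hcal}\|\delta\xi+l\|_{\Hcal}$. Taking the infimum over $l$, which is attained or approximated since $\Acal$ is finite dimensional, we get
\begin{equation*}
F(\eta)\ge F(\xi)-\|\nabla F(\xi)\|_{\Hcal}\,s+\tfrac{\kappa e^{-M}}{2}s^2,\qquad s:=\|\delta\xi\|_{\mathrm{quot}} .
\end{equation*}
To be careful, one first fixes $l$ in both the linear term and the quotient-norm bound, using $\|\delta\xi\|_{\mathrm{quot}}\le\|\delta\xi+l\|$ only in the favourable direction. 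It is cleaner to observe that the quadratic lower bound holds with $s$ replaced by the quotient norm, and then to minimize the scalar quadratic $-gs+\frac{\kappa e^{-M}}{2}s^2$ over $s\ge0$. This gives $F(\eta)\ge F(\xi)-\frac{\|\nabla F(\xi)\|^2_{\Hcal}}{2\kappa e^{-M}}$.

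Taking the infimum over $\eta\in K_M$ yields \cref{eq:PL_quotient}. The only delicate points are technical. First, the Taylor expansion must be justified along the segment with $\Hcal^\infty$-valued directions; this is ensured because $K_M\subset\Hcal^\infty$ and $u_{\xi_t}$ is uniformly bounded by $M$. Second, one must check the gauge orthogonality of the gradient, which is a direct computation from the invariance of $F$ under $\Acal$. No minimizer needs to exist, since the argument bounds $F(\eta)$ for every $\eta$.
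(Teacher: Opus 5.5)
Your proposal is correct and follows essentially the paper's argument. Both use strong convexity modulo $\Acal$ along the segment inside the convex set $K_M$, then the orthogonality of $\nabla F(\xi)$ to $\Acal$ to bound the linear term by $\|\nabla F(\xi)\|_{\Hcal}\|\eta-\xi\|_{\mathrm{quot}}$, then minimization of the scalar quadratic in the quotient norm. Your Taylor expansion with integral remainder just makes explicit how the Hessian lower bound yields the first-order inequality, a step the paper uses without comment.
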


\begin{proof}
Fix $\xi,\eta \in K_M$, since $K_M$ is convex by \cref{lem:prop_KM}, the segment joining $\xi$ and $\eta$ is in $K_M$. 
By \cref{prop:strong-convexity},
\[
F(\eta)
\geq
F(\xi)
+
\langle\nabla F(\xi),\eta-\xi\rangle_{\mathcal H}
+
\frac{C_{\mathrm{sc}}}2
\|\eta-\xi\|_{\rm quot}^2,
\]
where $C_{\mathrm{sc}}$ can be taken to be $e^{-M}\kappa$.
Since $F$ is invariant under translations by elements of \(\mathcal A\), its gradient is orthogonal to \(\mathcal A\). Therefore,
\[
\left|
\langle\nabla F(\xi),\xi-\eta\rangle_{\mathcal H}
\right|
\leq
\|\nabla F(\xi)\|_{\mathcal H}
\|\xi-\eta\|_{\rm quot}.
\]
and
\[
F(\xi)- F(\eta) \leq
\|\nabla F(\xi)\|_{\mathcal H}
\|\xi-\eta\|_{\rm quot}
-
\frac{C_{\mathrm{sc}}}2
\|\xi-\eta\|_{\rm quot}^2.
\]
Using
$ar-\frac{\sigma}{2}r^2
\leq\frac{a^2}{2\sigma},
\;r\geq0,$
we obtain, for every $\eta\in K_M$,
\[
F(\xi)-F(\eta)
\leq
\frac{1}{2C_{\mathrm{sc}}}
\|\nabla F(\xi)\|_{\mathcal H}^2.
\]
Taking the infimum over $\eta\in K_M$ proves the result.
\end{proof}

The following result shows the smoothness of the function.
\begin{proposition}\label{prop:smoothness-global}
    Let $M<+\infty$ and $\xi \in K_M$ the function $F$ satisfies:
    \begin{equation*}
        \forall \delta\xi \in \Hcal^\infty, D^2F(\xi)[\delta \xi,\delta \xi] \leq C_{\text{sm}}(M) \Vert \delta \xi \Vert_{\Hcal}^2
    \end{equation*}
    with
    \begin{equation*}
        C_\text{sm}(M) = 3\max\{1,D_{\nu}^2\} \exp(M).
    \end{equation*}
    In particular $F$ has Lipschitz gradients on every set $K_M$.
\end{proposition}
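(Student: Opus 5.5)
The plan is to write the Hessian quadratic form explicitly, bound the weight $e^{u_\xi}$ by $e^{M}$ using $\xi\in K_M$, and then bound the remaining unweighted quadratic form by the $\Hcal$-norm. As in the proof of \cref{prop:strong-convexity}, for $\delta\xi=(\delta\varphi,\delta\psi,\delta h)\in\Hcal^\infty$ we have
\[
D^2F(\xi)[\delta\xi,\delta\xi]=\int e^{u_\xi(x,y)}\bigl|\delta\varphi(x)+\delta\psi(y)+\delta h(x)\cdot(y-x)\bigr|^2\dd\mu\otimes\nu(x,y).
\]
Since $\|u_\xi\|_{L^\infty(\mu\otimes\nu)}\le M$, the weight satisfies $e^{u_\xi}\le e^{M}$ almost everywhere. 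It then suffices to bound $Q(\delta\xi)=\int|v_{\delta\xi}|^2\dd\mu\otimes\nu$ by $3\max\{1,D_\nu^2\}\|\delta\xi\|_{\Hcal}^2$.

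For this I will use $(a+b+c)^2\le 3(a^2+b^2+c^2)$ pointwise. This gives
\[
Q(\delta\xi)\le 3\Bigl(\|\delta\varphi\|_{L^2(\mu)}^2+\|\delta\psi\|_{L^2(\nu)}^2+\int\!\!\int|\delta h(x)\cdot(y-x)|^2\dd\nu(y)\dd\mu(x)\Bigr).
\]
Here I used that $\mu,\nu$ are probability measures, so that the integral of a function of $x$ alone against $\mu\otimes\nu$ equals its $L^2(\mu)$ norm squared, and similarly for functions of $y$ alone. For the last term, Cauchy--Schwarz gives $|\delta h(x)\cdot(y-x)|\le\|\delta h(x)\|\,\|y-x\|$. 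For $x\in X$ and $y\in Y$ we have $\|y-x\|\le D_\nu$, because $X\subseteq\operatorname{conv}(Y)$ and $\operatorname{diam}(\operatorname{conv}Y)=\operatorname{diam}Y$. Hence the last term is at most $D_\nu^2\|\delta h\|_{L^2(\mu;\R^d)}^2$, and combining the three terms gives the constant $C_{\rm sm}(M)=3\max\{1,D_\nu^2\}e^{M}$.

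The Lipschitz claim follows from this Hessian bound by a mean value argument along segments. Take $\xi,\eta\in K_M$. By convexity of $K_M$ (\cref{lem:prop_KM}), $\xi_t=\xi+t(\eta-\xi)\in K_M$ for $t\in[0,1]$, and by \cref{lem:prop_KM} the direction $\eta-\xi$ lies in $\Hcal^\infty$. Then
\[
\langle\nabla F(\eta)-\nabla F(\xi),w\rangle=\int_0^1D^2F(\xi_t)[\eta-\xi,w]\dd t .
\]
Since $D^2F(\xi_t)$ is a positive semidefinite symmetric form bounded by $C_{\rm sm}\|\cdot\|_{\Hcal}^2$, Cauchy--Schwarz for this form gives
\[
|D^2F(\xi_t)[\eta-\xi,w]|\le C_{\rm sm}\|\eta-\xi\|_{\Hcal}\|w\|_{\Hcal}
\]
for $w\in\Hcal^\infty$. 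Density of $\Hcal^\infty$ in $\Hcal$ then gives $\|\nabla F(\eta)-\nabla F(\xi)\|_{\Hcal}\le C_{\rm sm}\|\eta-\xi\|_{\Hcal}$.

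The only mild subtlety is the second step: one must observe that the Hessian form extends to $\Hcal$, or equivalently that the test directions can be taken in $\Hcal^\infty$ before passing to the limit by density. The rest is routine.
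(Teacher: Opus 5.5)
Your proof is correct and follows the paper's argument: bound the weight $e^{u_\xi}$ by $e^{M}$, then use $(a+b+c)^2\le 3(a^2+b^2+c^2)$ together with $\|y-x\|\le D_\nu$ on $X\times Y$. Your mean-value derivation of the Lipschitz consequence makes explicit what the paper only asserts. You can also skip the density step by testing directly with $w=\nabla F(\eta)-\nabla F(\xi)$, which already lies in $\Hcal^\infty$ by \cref{prop:well-defined}.
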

\begin{proof}
    Recall that
    \begin{equation*}
        D^2F(\xi)[\delta\xi,\delta\xi]
=
\int
e^{u_\xi(x,y)}
\left|
\delta\varphi(x)
+
\delta\psi(y)
+
\delta h(x)\cdot(y-x)
\right|^2
\dd\mu\otimes \nu(x,y).
    \end{equation*}
    Since $\xi\in K_M$, we have $e^{u_\xi(x,y)}\leq e^M$.  Using this upper bound in the integral we finally have by Cauchy--Schwarz inequality
    \begin{align*}
        D^2F(\xi)[\delta\xi,\delta\xi] &\leq 3 e^M \left(\Vert \delta \ph\Vert_{L^2(\mu)}^2+\Vert \delta \psi\Vert_{L^2(\nu)}^2+D_{\nu}^2\Vert \delta h\Vert_{L^2(\mu;\R^d)}^2\right)
        \leq 3e^M \max\{1,D_{\nu}^2\} \Vert \delta\xi\Vert_{\Hcal}^2.
    \end{align*}
\end{proof}

The components related to the multipliers of the marginal constraint enjoy a genuine strong convexity property in $L^2(\mu),L^2(\nu)$ norms.
\begin{proposition}\label{prop:block-strong-convexity}
    Let $M<+\infty$ and $\xi \in K_M$ and $\delta \ph \in L^\infty(\mu),\delta \psi \in L^\infty(\nu)$ then
    \begin{equation*}
    \begin{aligned}
        &D^2F(\xi)[(\delta \ph,0,0),(\delta \ph,0,0)] \geq C_{\text{bsc}}(M) \Vert \delta \ph \Vert_{L^2(\mu)}^2, 
        \\
        &D^2F(\xi)[(0,\delta \psi,0),(0,\delta \psi,0)] \geq C_{\text{bsc}}(M) \Vert \delta \psi \Vert_{L^2(\nu)}^2,
    \end{aligned}
    \end{equation*}
    with $C_{\text{bsc}}(M) = \exp(-M)$, which is nonincreasing in $M$. In particular $F$ is strongly convex in the $\ph$ and $\psi$ coordinates.
\end{proposition}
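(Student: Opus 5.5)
The plan is to compute the Hessian quadratic form restricted to a single block and bound the exponential weight from below on $K_M$, exactly as in the proof of \cref{prop:strong-convexity}. For $\delta\xi=(\delta\ph,0,0)$ with $\delta\ph\in L^\infty(\mu)$, the formula for $D^2F(\xi)$ recalled in the proof of \cref{prop:strong-convexity} gives
\begin{equation*}
D^2F(\xi)[(\delta\ph,0,0),(\delta\ph,0,0)]=\int e^{u_\xi(x,y)}\,|\delta\ph(x)|^2\dd\mu\otimes\nu(x,y).
\end{equation*}
Since $\xi\in K_M$, we have $u_\xi\ge -M$ $\mu\otimes\nu$-a.e., so the weight is at least $e^{-M}$. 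Integrating in $y$ against the probability measure $\nu$ then yields the lower bound $e^{-M}\Vert\delta\ph\Vert_{L^2(\mu)}^2$.

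The $\psi$ block is handled symmetrically. With $\delta\xi=(0,\delta\psi,0)$, the integrand is $e^{u_\xi(x,y)}|\delta\psi(y)|^2$. The same pointwise bound, followed by integration in $x$ against $\mu$, gives $e^{-M}\Vert\delta\psi\Vert_{L^2(\nu)}^2$.

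Monotonicity of $M\mapsto e^{-M}$ is immediate. For the final statement, that $F$ is strongly convex in each of these coordinates, we use that $K_M$ is convex by \cref{lem:prop_KM}. Along a segment $\xi+t(\delta\ph,0,0)$, $t\in[0,1]$, lying in $K_M$, we apply Taylor's formula with integral remainder to $t\mapsto F(\xi+t(\delta\ph,0,0))$. This function is $C^2$ by \cref{prop:well-defined}, since $K_M\subset\Hcal^\infty$. Combined with the pointwise Hessian bound, it gives
\begin{equation*}
F(\xi+(\delta\ph,0,0))\ge F(\xi)+\langle\nabla_\ph F(\xi),\delta\ph\rangle+\tfrac{e^{-M}}{2}\Vert\delta\ph\Vert^2_{L^2(\mu)},
\end{equation*}
and the analogous inequality holds for $\psi$.

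No step is a real obstacle. The only point needing care is this last one: strong convexity is asserted only relative to $K_M$, so the segment must stay in $K_M$ for the Hessian bound to apply along it, and this is exactly what the convexity of $K_M$ provides.
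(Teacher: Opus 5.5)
Your proposal is correct and follows the paper's proof. Both restrict the Hessian formula to a single block, use $u_\xi\ge -M$ on $K_M$, and integrate against the probability measure in the other variable. Your additional Taylor-formula argument along segments in $K_M$, which uses the convexity of $K_M$ from \cref{lem:prop_KM}, spells out the ``in particular'' claim in the form in which it is later used as \cref{point:strong_block_convexity}.
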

\begin{proof}
    Using the expression of the second derivative we get:
    \begin{equation*}
        D^2F(\xi)[(\delta \ph,0,0),(\delta \ph,0,0)] = \int \vert \delta \ph(x)\vert^2 e^{u_\xi(x,y)}\dd\mu\otimes \nu(x,y).
    \end{equation*}
    By definition of $C_{\text{bsc}}(M) $ we have $e^{u_\xi} \geq C_{\text{bsc}}(M)$ and the result follows by integration. The proof is analogous for the second claim.
\end{proof}

\section{Convergence of the SISTA}\label{sec:convergence}
This section is devoted to the proof of  \cref{thm:main-result}. Standard convergence results do not apply directly, since $F$ is neither globally smooth nor strongly convex on $\Hcal$ or $\Hcal^\infty$. 

Our proof proceeds in three steps.
First, we establish an abstract linear convergence theorem for cyclic block-coordinate methods on Hilbert spaces combining exact minimization and gradient steps, under regularity and convexity assumptions restricted to a closed convex set containing all partial iterates. Although based on standard techniques \cite{XuYin13,BeLu13,RaHoLu13}, this result accommodates both infinite-dimensional spaces and local regularity assumptions.
Second, and most importantly, we establish uniform bounds on suitably normalized iterates, ensuring that all partial iterates remain in a set where the required smoothness and convexity estimates hold uniformly.
Finally, we apply the abstract theorem to prove  \cref{thm:main-result}. We conclude by establishing uniform $\Hcal^\infty$ bounds for the original, unnormalized iterates, thereby justifying their use in the numerical experiments.

The function $F$ being invariant with respect to translations in $\Acal$ we use normalized iterates to carry the boundedness analysis. For the partial iterate $\xi^{n+\frac23}=(\ph^{n+1},\psi^{n+1},h^n)$, define
\[
a^n:=\int h^n\,\dd\mu,
\qquad
b^n:=\int \psi^{n+1}\,\dd\nu+a^n\cdot m_\nu,
\]
and 
\begin{equation}\label{eq:def_nomalized}
\left\{
\begin{aligned}
&\widehat h^n(x)
= h^n(x)-a^n\\
&\widehat\psi^{n+1}(y)
= \psi^{n+1}(y)+a^n\cdot y-b^n\\
&\widehat\varphi^{n+1}(x)
= \varphi^{n+1}(x)-a^n\cdot x+b^n
\end{aligned}
\right. .
\end{equation}
These normalized triples are written
\begin{align*}
    \widehat\xi^{n}&:=(\widehat\varphi^{n},\widehat\psi^{n},\widehat h^n)\\
    \widehat\xi^{n+\frac13}&:=(\widehat\varphi^{n},\widehat\psi^{n+1},\widehat h^n)\\
    \widehat\xi^{n+\frac23}&:=(\widehat\varphi^{n+1},\widehat\psi^{n+1},\widehat h^n).
\end{align*}
Observe that $a^n$ and $b^n$ are chosen in order for $\widehat h^n$ and $\widehat{\psi}^{n+1}$ to satisfy
\begin{equation}
\int \widehat h^n\,\dd\mu=0,
\qquad
\int \widehat\psi^{n+1}\,\dd\nu=0.
\label{eq:centering}
\end{equation}
Moreover, as $u_{\widehat\xi^{n+\frac23}}=u_{\xi^{n+2/3}}$ due to $\Acal$ translation invariance, the $\varphi$-update equality 
\begin{equation*}
\varphi^{n+1}(x)
=
-\log\int 
\exp\!\left(
\psi^{n+1}(y)
+
 h^n(x)\cdot(y-x)
 -c(x,y)
\right)\dd \nu(y),
\end{equation*}
rewrites
\begin{equation}\label{eq:hat_varphi}
\widehat\varphi^{n+1}(x)
=
-\log\int 
\exp\!\left(
\widehat\psi^{n+1}(y)
+
\widehat h^n(x)\cdot(y-x)-c(x,y)
\right)\dd \nu(y).
\end{equation}
Similarly, the $\psi$-update equality 
\begin{equation*}
\psi^{n+1}(y)
=
-\log\int 
\exp\left(
\varphi^n(x)
+
 h^n(x)\cdot(y-x)-c(x,y)
\right)\dd \mu(x).
\end{equation*}
rewrites
\begin{equation}\label{eq:hat_psi}
\widehat\psi^{n+1}(y)
=
-\log\int 
\exp\left(
\tilde\varphi^n(x)
+
\widehat h^n(x)\cdot(y-x)-c(x,y)
\right)\dd \mu(x),
\end{equation}
where 
\begin{equation}\label{eq:tilde_normalization}
    \tilde\varphi^n:=\varphi^n-a^n\cdot x+b^n.
\end{equation}
\subsection{A Convergence Result for a Mixed Cyclic Block Coordinate Descent}\label{subsection:abstract}

Let $\mathcal{H}=\mathcal{H}_1\times\cdots\times\mathcal{H}_N$ be a product of real Hilbert spaces, endowed with its usual scalar product $\langle \cdot,\cdot \rangle$ defined by 
$$
\langle (x_1,\dots,x_N),(y_1,\dots,y_N) \rangle = \sum_{k=1}^N \langle x_k, y_k\rangle.
$$
For all $k \in \{1,\dots,N\}$, let $$J_k : x_k \in \mathcal{H}_k \mapsto (0,\dots, 0, x_k, 0,\dots,0) \in \mathcal H$$ denote the canonical injection from $\mathcal H_k$ to $\mathcal H$. The goal is to prove the linear convergence of a cyclic block coordinate descent with gradient step and exact steps for a given function $F:\mathcal{H}\to(-\infty,+\infty]$. 
The blocks are indexed in their update order and partitioned into sets $\mathcal{E}$ and $\mathcal{G}$, corresponding to exact minimization and gradient steps, respectively.
For the gradient steps we denote by $\tau$ the size of the gradient step. 
Let $K$ be a closed convex subset of $\mathcal H$ on which $F$ is finite. For each $\xi \in K$, we assume the existence of a vector $\nabla F(\xi)=(\nabla_1F(\xi),\dots,\nabla_NF(\xi))\in\mathcal{H}$ such that:
\begin{enumerate}[label=(A\arabic*), ref=(A\arabic*)]
\item\label{point:finite_min} $\inf_K F \in \R$;
\item\label{point:smooth}  $\lVert \nabla F(\xi')-\nabla F(\xi)\rVert\le L\,\lVert \xi'-\xi\rVert $ for all $(\xi,\xi')\in K \times K;$
\item\label{point:convex}  $F(\xi')\ge F(\xi)+\langle \nabla F(\xi),\xi'-\xi\rangle$ for all $\xi\in K$ and $\xi'\in\mathcal{H}$;
\item\label{point:PLI} $
F(\xi)-\inf_K F\ \le\ \frac1{2\sigma}\,\lVert \nabla F(\xi)\rVert^2$ for all $\xi\in K;
$
\item\label{point:strong_block_convexity} For all $k\in\mathcal{E}$ and $(\xi,\xi') \in K \times K$ such that $\xi'-\xi\in J_k(\mathcal{H}_k)$, $$F(\xi')\ge F(\xi)+\langle \nabla F(\xi),\xi'-\xi\rangle+\frac m2\,\lVert \xi'-\xi\rVert^2.$$ 
\end{enumerate}
\begin{remark}
    Note that \cref{point:smooth,point:convex} for convex functions are almost $L$-smoothness. The main difference is that we require the Lipschitz behavior only for directions in the affine span of $K$, thus, it does not necessarily imply differentiability.
\end{remark}
\begin{remark}
    For the application of this theorem in \cref{sec:proof_of_convergence} we will use $K= K_M $ defined in \cref{eq:KM} for $M$ sufficiently large. This set has been shown to be closed and convex in \cref{lem:prop_KM}.
\end{remark}
Moreover, assume there exist partial iterates $\xi^{n+\frac kN}$, $n \ge 0$ and $k\in \{ 0,\dots,N-1\}$ such that: 
\begin{enumerate}[label=(I\arabic*), ref=(I\arabic*)]
\item\label{point:belongs} For all $n \ge 0$ and $k \in \{0,\dots,N-1\}$, the partial iterate $\xi^{n+\frac kN}$ belongs to $K$;
\item\label{point:exact_and_gradient} For all $n \ge 0$ and $k \in \{1, \dots, N\}$, $\xi^{n+\frac kN}   - \xi^{n+ \frac{k-1}{N} }\in J_k\left(\mathcal H_k\right)$ and
\begin{equation}
    \begin{cases}
        \xi^{n+\frac kN}   = \xi^{n+ \frac{k-1}{N}}-\tau\,J_k\left(\nabla_kF(\xi^{n+\frac{k-1}{N}})\right)&\text{if }  k \in \mathcal G \\
        \nabla_k F(\xi^{n+\frac kN}) = 0 & \text{if }k \in \mathcal E
    \end{cases}.
\end{equation}  
\item\label{point:lipschitz_on_segments} For all $k \in\mathcal{G}$, $n \ge 0$ and $s\in [0,1]$, writing $\xi^{n+\frac kN}_s:=\xi^{n+\frac{k-1}{N}}+s\bigl(\xi^{n+\frac {k}{N}}-\xi^{n+\frac {k-1}{N}}\bigr)$, it holds
\[
\lVert \nabla_kF(\xi^{n+\frac kN}_s)-\nabla_kF(\xi^{n+\frac{k-1}{N}})\rVert\le \tau^{-1}\,\lVert \xi^{n+\frac kN}_s-\xi^{n+\frac {k-1}{N}}\rVert.
\]
\end{enumerate}
Though these properties of the iterates are stated abstractly they are exactly mimicking SISTA (without the proximal step). We distinguish the global Lipschitz constant
$L$ from the step-size bound $\tau^{-1}$, since the $h$-gradient admits a sharper estimate along the gradient-update segments.
\begin{theorem}\label{thm:abstract}
Assume \cref{point:finite_min}-\cref{point:strong_block_convexity} and \cref{point:belongs}-\cref{point:lipschitz_on_segments}. Set $E_n=F(\xi^n)-\inf_{K} F \in [0,+\infty)$, because $\xi^n \in K$, and
\[
\alpha=\min\Bigl\{m,\tau ^{-1}\Bigr\},\qquad C=(\tau^{-1}+L\sqrt{N})^2,\qquad q=\frac{C}{C+\sigma\alpha}\in(0,1).
\]
Then $\min_{\mathcal H} F = \min_{ K} F=\inf_K F$ and the following holds:
\begin{enumerate}
\item[(i)] $E_{n+1}\le q\,E_n$ for all $n\ge0$;
\item[(ii)] There exists a minimizer $\xi^\infty$ of $F$ over $\Hcal$ that belongs to $K$ and satisfies
\begin{equation}\label{eq:conv_rate}
    \lVert \xi^n-\xi^\infty\rVert \leq \dfrac{\sqrt{2E_0/\alpha}}{1-\sqrt q}\;  (\sqrt q)^n, \qquad \text{for all } n\ge 0.
\end{equation}
\end{enumerate}
\end{theorem}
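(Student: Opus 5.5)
The plan is to establish a per-iteration sufficient decrease bounded below by the squared size of the steps, then bound the full gradient at $\xi^{n+1}$ by those step sizes, and finally combine both with the PL inequality \cref{point:PLI}.

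\textbf{Step 1 (sufficient decrease).} For an exact block $k\in\mathcal E$, we have $\nabla_kF(\xi^{n+\frac kN})=0$ and both partial iterates lie in $K$ by \cref{point:belongs}. Applying \cref{point:strong_block_convexity} with base point $\xi^{n+\frac kN}$ gives
\[
F(\xi^{n+\frac{k-1}N})-F(\xi^{n+\frac kN})\ge \tfrac m2\|\xi^{n+\frac kN}-\xi^{n+\frac{k-1}N}\|^2 .
\]
For a gradient block $k\in\mathcal G$, we write $F(\xi^{n+\frac kN})-F(\xi^{n+\frac{k-1}N})$ as $\int_0^1\langle\nabla_kF(\xi_s^{n+\frac kN}),\delta\rangle\,\dd s$ with $\delta=-\tau\nabla_kF(\xi^{n+\frac{k-1}N})$. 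This requires differentiability along the segment. The segment lies in $K$ by convexity of $K$, and \cref{point:smooth,point:convex} give a gradient there. To pass from the subgradient inequality to the fundamental theorem of calculus along the segment, we use the convexity inequality in both directions between two nearby points. Then \cref{point:lipschitz_on_segments} yields
\[
F(\xi^{n+\frac{k-1}N})-F(\xi^{n+\frac kN})\ge \tfrac{1}{2\tau}\|\delta\|^2 .
\]
Summing over $k$ gives
\[
E_n-E_{n+1}\ge \tfrac\alpha2\sum_k\|\xi^{n+\frac kN}-\xi^{n+\frac{k-1}N}\|^2=:\tfrac\alpha2 S_n .
\]

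\textbf{Step 2 (gradient bound).} We bound $\|\nabla F(\xi^{n+1})\|^2=\sum_k\|\nabla_kF(\xi^{n+1})\|^2$ block by block, splitting $\nabla_kF(\xi^{n+1})$ as $[\nabla_kF(\xi^{n+1})-\nabla_kF(\xi^{n+\frac kN})]+\nabla_kF(\xi^{n+\frac kN})$ for exact blocks, and with $\xi^{n+\frac{k-1}N}$ in place of $\xi^{n+\frac kN}$ for gradient blocks. The second term vanishes for exact blocks and equals $\tau^{-1}$ times the step for gradient blocks. The first term is bounded using \cref{point:smooth} by $L\|\xi^{n+1}-\xi^{n+\frac{j}N}\|\le L\sqrt N\,S_n^{1/2}$ via the triangle and Cauchy--Schwarz inequalities. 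Taking a Minkowski-type sum, we aim for
\[
\|\nabla F(\xi^{n+1})\|\le(\tau^{-1}+L\sqrt N)S_n^{1/2},
\]
that is, $\|\nabla F(\xi^{n+1})\|^2\le C S_n$. I expect this constant bookkeeping to be the main obstacle, since it must reproduce exactly $C=(\tau^{-1}+L\sqrt N)^2$. The cleanest way is to view the vector of block errors as an element of $\mathcal H$ and bound it by $\tau^{-1}\sqrt{S_n}+L\sqrt N\sqrt{S_n}$.

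\textbf{Step 3 (linear rate).} By \cref{point:PLI},
\[
E_{n+1}\le \tfrac1{2\sigma}CS_n\le\tfrac{C}{\sigma\alpha}(E_n-E_{n+1}),
\]
hence $E_{n+1}\le qE_n$. This gives (i). For (ii), Step 1 gives $\|\xi^{n+1}-\xi^n\|\le\sqrt{S_n}\le\sqrt{2E_n/\alpha}\le\sqrt{2E_0/\alpha}\,q^{n/2}$. Thus $(\xi^n)$ is Cauchy, and it converges to some $\xi^\infty\in K$ since $K$ is closed. Summing the geometric tail gives \cref{eq:conv_rate}.

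\textbf{Identification of the limit.} We need $\xi^\infty$ to be a minimizer. Since $\nabla F(\xi^{n+1})\to0$, the convexity inequality \cref{point:convex} at $\xi^{n}$ gives $F(\xi')\ge \limsup_n F(\xi^n)=\inf_KF$ for every $\xi'\in\mathcal H$, using that the iterates are bounded. So $\inf_{\mathcal H}F=\inf_KF$. Continuity of $F$ on $K$ along the sequence follows from \cref{point:convex} together with the Lipschitz gradient, which give two-sided bounds $|F(\xi^\infty)-F(\xi^n)|\le \|\nabla F\|\,\|\cdot\|+\tfrac L2\|\cdot\|^2$. Hence $F(\xi^\infty)=\inf_KF$, the infima are attained, and $\min_{\mathcal H}F=\min_KF$.
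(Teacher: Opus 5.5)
Your proposal follows the paper's proof step by step: per-block sufficient decrease, a block-by-block bound on $\nabla F(\xi^{n+1})$, the PL inequality, and the Cauchy-sequence argument. The per-block decrease uses \cref{point:strong_block_convexity} for exact blocks, and for the gradient block the convexity/continuity argument making $s\mapsto F(\xi_s)$ differentiable on the segment, combined with \cref{point:lipschitz_on_segments}.

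There is one slip in Step 2 which, as written, does not produce the constant in the statement. You bound each block error by $L\|\xi^{n+1}-\xi^{n+\frac jN}\|\le L\sqrt N\,S_n^{1/2}$ via the triangle and Cauchy--Schwarz inequalities. Feeding $N$ such block bounds into the Minkowski sum gives $\|\nabla F(\xi^{n+1})\|\le(\tau^{-1}+LN)S_n^{1/2}$, i.e.\ $C=(\tau^{-1}+LN)^2$ rather than $(\tau^{-1}+L\sqrt N)^2$.

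The missing observation is that the increments $\Delta_l^n=\xi^{n+\frac lN}-\xi^{n+\frac{l-1}N}$ lie in the mutually orthogonal subspaces $J_l(\mathcal H_l)$. Hence $\|\xi^{n+1}-\xi^{n+\frac jN}\|^2=\sum_{l>j}\|\Delta_l^n\|^2\le S_n$ exactly. This is the paper's route, and it is the same Pythagorean identity you already use implicitly when writing $\|\xi^{n+1}-\xi^n\|\le\sqrt{S_n}$. With it, each block error is at most $L\sqrt{S_n}$, the vector of block errors has norm at most $L\sqrt N\sqrt{S_n}$, and the stated $C$ follows.

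Your identification of the limit is correct but slightly different from the paper's. You pass to the limit in \cref{point:convex} at $\xi^n$ to get $\inf_{\mathcal H}F=\inf_KF$, then show $F(\xi^n)\to F(\xi^\infty)$. The paper instead uses \cref{point:smooth} to get $\nabla F(\xi^\infty)=\lim_n\nabla F(\xi^n)=0$ and applies \cref{point:convex} at $\xi^\infty\in K$ directly. This avoids the separate continuity-of-values step and is shorter.
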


\begin{proof}
Given $n \ge 0$ and $k\in \{1, \dots,N\}$, as $\xi^{n+\frac kN}   - \xi^{n+ \frac{k-1}{N} }\in J_k\left(\mathcal H_k\right)$ by \cref{point:exact_and_gradient}, there exists $\Delta_k^n \in \mathcal H_k$ such that $  \xi^{n+\frac {k}{N}} - \xi^{n+\frac {k-1}{N}} = J_k(\Delta_k^n)$. Thus
\begin{equation*}
    \xi^{n+1} - \xi^{n+\frac {k}{N}} = \sum_{j=k+1}^N \xi^{n+\frac {j}{N}} - \xi^{n+\frac {j-1}{N}} = (0,\dots,0, \Delta_{k+1}^n,\dots,\Delta_N^n),
\end{equation*}
which gives
\begin{equation}\label{eq:norme_diff_partial}
    || \xi^{n+1} - \xi^{n+\frac {k}{N}}||^2 = \sum_{l=k+1}^N ||\Delta_l^n||^2.
\end{equation}
For $k=0$, we get 
\begin{equation}\label{eq:norm_diff_global}
    || \xi^{n+1} - \xi^{n}||^2 =\sum_{l=1}^N ||\Delta_l^n||^2 \geq || \xi^{n+1} - \xi^{n+\frac {k}{N}}||^2.
\end{equation}
We will show that
\begin{equation}\label{eq:diff_value_controls_gap}
    \frac\alpha 2\,\lVert \Delta^n_k\rVert^2 \le F(\xi^{n+\frac {k-1}{N}})-F(\xi^{n+\frac {k}{N}}).
\end{equation}
\paragraph{First case $k \in \mathcal G$.}
By \cref{point:belongs} and convexity of $K$,  
\begin{equation}\label{eq:seg_belong}
    \xi_s := (1-s)\xi^{n+\frac {k-1}{N}} + s \xi^{n+\frac {k}{N}} \in K, \quad \text{for all } s\in [0,1].
\end{equation}
We define $$g: s \mapsto F(\xi_s) \qquad \text{and} \qquad \gamma:s \mapsto \langle \nabla F(\xi_s),\xi_1-\xi_0\rangle.$$ By \cref{eq:seg_belong} and \cref{point:convex}, it holds $g(t)\ge g(s)+(t-s)\gamma(s)$ for all $s,t\in [0,1]$. Thus, $g$ is convex and, for $s<t$, $\gamma(s)\le g'_+(s)\le g'_-(t)\le\gamma(t)$. As $\gamma$ is continuous by \cref{point:smooth}, $g$ is differentiable and $g' = \gamma.$ Since $\xi_1-\xi_0 = J_k(-\tau \nabla_kF(\xi_0))$ by \cref{point:exact_and_gradient}, we have $\Delta_k^n = - \tau \nabla_kF(\xi_0)$ and thus$$\gamma(0) = \langle \nabla F(\xi_0),\xi_1-\xi_0\rangle = -\tau ||\nabla_kF(\xi_0)||^2=-\tau^{-1}||\Delta_k^n||^2.$$
Moreover,
$$\gamma(s)-\gamma(0) = \langle \nabla F(\xi_s)-\nabla F(\xi_0),\xi_1-\xi_0\rangle$$
and  by \cref{point:lipschitz_on_segments} and Cauchy--Schwarz's inequality we have
\begin{align*}
    F(\xi^{n+\frac {k}{N}}) - F(\xi^{n+\frac {k-1}{N}}) &= g(1)-g(0) = \gamma(0) + \int_0^1 (\gamma(s) - \gamma(0)) \dd s \\
    &= -\tau^{-1} ||\Delta_k^n||^2 + \int_0^1 \langle \nabla F(\xi_s)-\nabla F(\xi_0),\xi_1-\xi_0\rangle \dd s \\
    &\le -\tau^{-1} ||\Delta_k^n||^2 + \int_0^1 \tau^{-1}s ||\xi_1 - \xi_0||^2 \dd s\\
    &= -\tau^{-1} ||\Delta_k^n||^2 + \frac {\tau^{-1}}{2} ||\Delta_k^n||^2 =  -\frac{1}{2\tau}||\Delta_k^n||^2,
\end{align*}
which yields 
\begin{equation}\label{eq:controle_G}
\frac\alpha 2\,\lVert \Delta^n_k\rVert^2 \le F(\xi^{n+\frac {k-1}{N}})-F(\xi^{n+\frac {k}{N}}).
\end{equation}
\paragraph{Second case $k \in \mathcal E$.}
By \cref{point:strong_block_convexity}, \cref{point:belongs} and \cref{point:exact_and_gradient}, we have 

\begin{equation}\label{eq:controle_K}
\begin{aligned}
    F(\xi^{n+\frac {k-1}{N}})-F(\xi^{n+\frac {k}{N}}) &\ge \langle \nabla F(\xi^{n+\frac {k}{N}}) , \xi^{n+\frac {k-1}{N}} - \xi^{n+\frac {k}{N}} \rangle + \frac m2 ||\xi^{n+\frac {k-1}{N}} - \xi^{n+\frac kN}||^2\\
    &= \frac m2 ||\xi^{n+\frac {k-1}{N}} - \xi^{n+\frac {k}{N}}||^2
    \ge \frac \alpha 2 ||\Delta_k^n||^2.
\end{aligned}
\end{equation}
In any case \cref{eq:diff_value_controls_gap} holds and summing the inequality over $k \in \{1,\dots,N\}$ yields
\begin{equation}\label{eq:controle_global}
    \frac\alpha2\,\lVert \xi^{n+1}-\xi^n\rVert^2 \le F(\xi^n)-F(\xi^{n+1}).
\end{equation}
We will now show that
\begin{equation}\label{eq:controle_of_gradient}
    ||\nabla F(\xi^{n+1})||^2 \le C  ||\xi^{n+1}-\xi^n||^2.
\end{equation}
Let $k \in \{1, \dots, N\}$.

\paragraph{First case $k \in \mathcal G$.}  By \cref{point:exact_and_gradient}, $\nabla_k F(\xi^{n+\frac {k-1}{N}}) = - \tau^{-1} \Delta_k^n$. Together with \cref{point:smooth,eq:norm_diff_global} this yields
\begin{align*}
    ||\nabla_kF(\xi^{n+1})|| &\le ||\nabla_kF(\xi^{n+\frac{k-1}{N}}) ||+||\nabla_kF(\xi^{n+1}) - \nabla_kF(\xi^{n+\frac{k-1}{N}}) ||\\
    &\le \tau^{-1}||\Delta_k^n||+ L||\xi^{n+1}-\xi^{n+\frac{k-1}{N}}||\\
    &\le \tau^{-1}||\Delta_k^n||+ L||\xi^{n+1}-\xi^{n}||.
\end{align*}
\paragraph{Second case $k \in \mathcal E$.} By \cref{point:exact_and_gradient} we have $\nabla_k F(\xi^{n+\frac {k}{N}}) = 0$. Thus \cref{point:smooth,eq:norm_diff_global} yield with the same reasoning
\begin{equation}
    ||\nabla_kF(\xi^{n+1})|| \le \tau^{-1}||\Delta_k^n||+ L||\xi^{n+1}-\xi^{n}||.
\end{equation}
Taking the squares and summing over $k \in \{1, \dots, N\}$, Minkowski's inequality in $\R^N$, together with \cref{eq:norm_diff_global} gives
\begin{align*}
    ||\nabla F(\xi^{n+1})|| &= \left(\sum_{k=1}^N ||\nabla_kF(\xi^{n+1})||^{2}\right)^{1/2} 
    \le \left(\sum_{k=1}^N (\tau^{-1}||\Delta_k^n||)^{2}\right)^{1/2} + \left(\sum_{k=1}^N (L ||\xi^{n+1}-\xi^{n}||)^{2}\right)^{1/2} \\
    &\leq (\tau^{-1} + L \sqrt{N}) ||\xi^{n+1}-\xi^n||,
\end{align*}
which rewrites
\begin{equation*}
    ||\nabla F(\xi^{n+1})||^2 \le C  ||\xi^{n+1}-\xi^n||^2.
\end{equation*}
By \cref{point:PLI} at $\xi^{n+1} \in K$, $$E_{n+1} \le \frac1{2\sigma} ||\nabla F(\xi^{n+1})||^2.$$
Together with \cref{eq:controle_global} and \cref{eq:controle_of_gradient}, this yields
\begin{equation}
    E_{n+1} \le \frac1{2\sigma} ||\nabla F(\xi^{n+1})||^2 \le \frac{C}{2\sigma}||\xi^{n+1}-\xi^n||^2 \le \frac{C}{\alpha \sigma} (E_n-E_{n+1}),
\end{equation}
which rewrites $E_{n+1} \le qE_{n}$ and proves the first point. To prove the second point, note that by induction
\begin{equation}\label{eq:GR}
    E_n \le E_0 q^n, \qquad \text{for all } n \ge 0.
\end{equation}
Together with \cref{eq:controle_global}, this yields $$||\xi^{n+1} - \xi^n|| \le \sqrt{\frac 2\alpha (E_n-E_{n+1})} \le \left(\frac{2E_0}{\alpha}\right)^{1/2}q^{n/2}.$$ By the triangle inequality,
\begin{equation}\label{eq:Cauchy}
\lVert \xi^m-\xi^n\rVert\  \le\ \sum_{j\ge n}\lVert \xi^{j+1}-\xi^j\rVert\ \le\ \Bigl(\frac{2E_0}\alpha\Bigr)^{1/2}\sum_{j\ge n}q^{j/2}\ =\ \frac{\sqrt{2E_0/\alpha}}{1-\sqrt q}\;q^{n/2},
\end{equation}
which proves $(\xi^n)_{n\ge 0}$ is a Cauchy sequence, therefore  converging to an element $\xi^\infty$. As $K$ is closed, $\xi^\infty$ belongs to $K$. The convergence rate \cref{eq:conv_rate} directly follows by taking the limit in $m$ in \cref{eq:Cauchy}. We are left to show that $\xi^\infty$ minimizes $F$. By \cref{eq:controle_of_gradient} $\nabla F(\xi^n) \to 0$, while \cref{point:smooth} proves $\nabla F$ is continuous on $K$. This yields $\nabla F(\xi^\infty) = 0$, which by \cref{point:convex} shows $\xi^\infty$ globally minimizes $F$. 
\end{proof}

The rest of the section is dedicated to proving that we can apply this result to our dual functional. The properties \cref{point:smooth}--\cref{point:strong_block_convexity} and \cref{point:lipschitz_on_segments} are straightforward consequences of the properties of $F$ shown in \cref{sec:properties}, while \cref{point:exact_and_gradient} will directly follow from the definition of the SISTA scheme. The only difficult part, carried out in the next subsection, is to show \cref{point:belongs}. For this, we prove that the normalized iterates $\hat{\xi}^n$ are in fact contained in a $\mathcal H^\infty$- ball. As any $\mathcal H^\infty$ ball is contained in a $K_M$ set and $K_M$ sets are $\mathcal A$-invariant, this concludes.

\subsection{Boundedness of the Iterates}\label{sec:bounded-iterates}

Throughout this section let $\varphi^n, \psi^n,h^n$  and $\xi^{n},\xi^{n+\frac13}, \xi^{n+\frac23}$ be defined as in \cref{alg:SISTA}, assume also that $2\tau D_{\nu}^2 \leq 1$. We start by proving that our objective function $F$ is non-increasing along the partial iterates, and give a quantitative estimate for the gradient block update.

\begin{lemma}\label{lem:descent}
Assume that $2\tau D_{\nu}^2 \leq 1$, then
\begin{equation}\label{eq:stric_grad_decrease}
F(\xi^{n+1})
\le
F(\xi^{n+\frac23})
-
\frac{\tau}{2}
\|\nabla_hF(\xi^{n+\frac23})\|_{L^2(\mu;\R^d)}^2,
\end{equation}
and the objective is non-increasing along the whole iteration,
\begin{equation}
F(\xi^{n+1})
\le
F(\xi^{n+\frac23})
\le
F(\xi^{n+\frac13})
\le
F(\xi^n).
\label{eq:monotonicity}
\end{equation}
\end{lemma}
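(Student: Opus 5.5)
The two Sinkhorn inequalities in \cref{eq:monotonicity} are immediate. The only substantive claim is the sufficient-decrease estimate \cref{eq:stric_grad_decrease} for the gradient step in $h$.

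\emph{Sinkhorn steps.} By definition $\psi^{n+1}$ is an exact minimizer of $\psi\mapsto F(\varphi^n,\psi,h^n)$. Indeed, this map is convex, and the explicit formula for $\psi^{n+1}$ is precisely the solution of $\nabla_\psi F=0$, i.e.\ $\int e^{u}\dd\mu=1$. Hence $F(\xi^{n+\frac13})\le F(\xi^n)$. The same argument applied to $\varphi^{n+1}$ gives $F(\xi^{n+\frac23})\le F(\xi^{n+\frac13})$. Convexity of each block can be read off from the nonnegative Hessian formula, or directly from convexity of the exponential.

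\emph{Gradient step.} Write $\xi_s:=\xi^{n+\frac23}+s(\xi^{n+1}-\xi^{n+\frac23})$ for $s\in[0,1]$. Only the $h$-component moves: it equals $h^n-s\tau R^n$, where $R^n=\nabla_hF(\xi^{n+\frac23})$. Since $\varphi^{n+1}=Z_{\psi^{n+1},h^n}$, \cref{prop:bounded-gradients} gives $\|R^n\|_{L^\infty}\le D_\nu$. Every $\xi_s$ lies in $\Hcal^\infty$, so $g(s):=F(\xi_s)$ is $C^1$. Its derivative is
\[
g'(s)=-\tau\langle \nabla_hF(\xi_s),R^n\rangle_{L^2(\mu;\R^d)},\qquad g'(0)=-\tau\|R^n\|^2_{L^2}.
\]
Therefore
\[
g(1)-g(0)=-\tau\|R^n\|^2+\tau\int_0^1\langle \nabla_hF(\xi_0)-\nabla_hF(\xi_s),R^n\rangle\dd s .
\]
To bound the integrand I apply \cref{prop:lipschitz-gradients} with $h=h^n$ and $h'=h^n-s\tau R^n$. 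Then $\|h-h'\|_{L^\infty}\le s\tau D_\nu$ and $\|h-h'\|_{L^2}=s\tau\|R^n\|$, so
\[
\|\nabla_hF(\xi_0)-\nabla_hF(\xi_s)\|\le D_\nu^2e^{s\tau D_\nu^2}s\tau\|R^n\|.
\]
This gives $g(1)-g(0)\le -\tau\|R^n\|^2\bigl(1-\tau D_\nu^2\int_0^1 s e^{s\tau D_\nu^2}\dd s\bigr)$. Setting $\beta:=\tau D_\nu^2\le\frac12$, the claim reduces to the scalar inequality $\beta\int_0^1 se^{s\beta}\dd s\le\frac12$.

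\emph{Main obstacle.} This scalar inequality is where I expect trouble. The integral equals $\frac{(\beta-1)e^\beta+1}{\beta}$, so the quantity to control is $(\beta-1)e^\beta+1$. At $\beta=\frac12$ this is $1-\frac{\sqrt e}{2}\approx0.176\le\frac12$, and the expression is increasing in $\beta$, so the bound holds on $[0,\frac12]$ with room to spare. Hence $g(1)\le g(0)-\frac{\tau}{2}\|R^n\|^2$, which is \cref{eq:stric_grad_decrease}, and it also yields the first inequality in \cref{eq:monotonicity}.

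If the constant in \cref{prop:lipschitz-gradients} turned out to be weaker than it appears, I would fall back on the Hessian formula along the segment. The direction is $(0,0,-\tau R^n)$, and the weight $e^{u_{\xi_s}}$ is at most $e^{s\tau D_\nu^2}$ times a probability density in $y$, which gives the same bound.
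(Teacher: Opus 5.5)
Your proof is correct and follows the paper's argument essentially step for step: the segment in the $h$-block, $g'(0)=-\tau\|R^n\|^2$, \cref{prop:lipschitz-gradients} applied at the normalized point $\varphi^{n+1}=Z_{\psi^{n+1},h^n}$, and integration in $s$. The only differences are that you keep the factor $e^{s\tau D_\nu^2}$ where the paper bounds it by $e^{\tau D_\nu^2}$ and concludes with $\beta e^{\beta}\le 1$ for $\beta\le\frac12$, and that your ``main obstacle'' computation has a slip: since $\int_0^1 se^{s\beta}\dd s=\frac{(\beta-1)e^\beta+1}{\beta^2}$, the quantity to compare with $\frac12$ is $\frac{(\beta-1)e^\beta+1}{\beta}$ (equal to $2-\sqrt e\approx 0.35$ at $\beta=\frac12$), not $(\beta-1)e^\beta+1$, though the conclusion survives and the paper's cruder bound $\beta\int_0^1 se^{s\beta}\dd s\le \frac{\beta e^\beta}{2}\le\frac12$ makes this step a non-issue.
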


\begin{proof}
The exact $\psi$ and
$\varphi$-updates are block minimizations, hence
$F(\xi^{n+\frac23})\le F(\xi^{n+\frac13})\le F(\xi^n)$. To prove \cref{eq:stric_grad_decrease}, set $$\xi_s =\xi^{n+2/3} + s (\xi^{n+1}-\xi^{n+2/3}) = (\varphi^{n+1},\psi^{n+1}, h^n - s \tau R^n)$$ and $g(s) = F(\xi_s)$ for all $s \in [0,1]$. It holds, $g'(s) = \langle \nabla_h F(\xi_s),-\tau R^n \rangle,$ and in particular $g'(0) = - \tau \| R^n \|^2$. Thus 
\begin{equation}\label{eq:proof-decrease}
\begin{aligned}
g(1)-g(0) &= \int_0^1 \langle \nabla_h F(\xi_s),-\tau R^n \rangle \, \dd s \\
&= - \tau \| R^n \|^2 + \int_0^1 \langle \nabla_h F(\xi_s)- \nabla_h F(\xi_0),-\tau R^n\rangle \, \dd s.
\end{aligned}
\end{equation}
Since $\xi_s-\xi_0=(0,0,-s\tau R^n)$ has norm $s\tau\|R^n\|$, the
Cauchy--Schwarz inequality and the Lipschitz bound given by \cref{prop:lipschitz-gradients} applied to \cref{eq:proof-decrease} bound the integrand by $L(\tau)\,s\tau^2\|R^n\|^2$ with $L(\tau) =  D_{\nu}^2 e^{\tau D_{\nu}^2}$. Integrating in $s$ gives $$F(\xi^{n+1})-F(\xi^{n+\frac23})
\le
-\tau\Bigl(1-\frac{\tau L(\tau)}{2}\Bigr)\|R^n\|^2 .$$ As $\tau L(\tau)= \tau  D_{\nu}^2 e^{\tau D_{\nu}^2}\le 1$, since $\tau D_{\nu}^2 \leq \frac{1}{2}$ and $x\leq \frac{1}{2}$ imply $x e^x \leq 1$ , this concludes.
\end{proof}

Throughout this section, we denote by  $(\widehat\varphi^{n+1},\widehat\psi^{n+1},\widehat h^n)$ and $\tilde\varphi^{n}$ the normalized iterates defined in \cref{eq:def_nomalized} and \cref{eq:tilde_normalization}.

\begin{lemma}\label{lem:control_integrals_ph}
    Set $F^\ast = \inf_{\mathcal{H}^\infty} F$, then
    \begin{equation}
1-F(\xi^0)
\le
\int \widehat\varphi^{n+1}\,\dd\mu
\le
1-F^\ast,
\label{eq:mean_phi_bounds}
\end{equation}
\begin{equation}
1-F(\xi^0)
\le
\int \tilde\varphi^{n}\,\dd\mu
\le
1-F^\ast.
\label{eq:mean_bar_phi_bounds}
\end{equation}
\end{lemma}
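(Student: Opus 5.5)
The idea is that at a point where one Sinkhorn block has just been minimized exactly, the exponential term of $F$ has total mass one. Hence $F$ reduces to $1$ minus the integrals of the potentials, and the normalization makes the $\psi$-integral vanish.

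\emph{First bound.} Take the partial iterate $\xi^{n+\frac23}=(\varphi^{n+1},\psi^{n+1},h^n)$. The $\varphi$-update gives, for $\mu$-a.e.\ $x$,
\[
\int e^{u_{\xi^{n+2/3}}(x,y)}\dd\nu(y)=1,
\]
so $\int e^{u_{\xi^{n+2/3}}}\dd\mu\otimes\nu=1$. Since $F$ is invariant under $\Acal$, we get
\[
F(\xi^{n+\frac23})=F(\widehat\xi^{n+\frac23})=1-\int\widehat\varphi^{n+1}\dd\mu-\int\widehat\psi^{n+1}\dd\nu .
\]
The last integral vanishes by \cref{eq:centering}, so $\int\widehat\varphi^{n+1}\dd\mu=1-F(\xi^{n+\frac23})$.

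For the lower bound, monotonicity (\cref{lem:descent}, \cref{eq:monotonicity}) gives $F(\xi^{n+\frac23})\le F(\xi^n)\le F(\xi^0)$ by induction over $n$. For the upper bound, $\xi^{n+\frac23}\in\Hcal^\infty$ by \cref{prop:well-defined}, so $F(\xi^{n+\frac23})\ge F^\ast$.

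\emph{Second bound.} Use $\xi^{n+\frac13}=(\varphi^n,\psi^{n+1},h^n)$ instead. Now the $\psi$-update gives $\int e^{u}\dd\mu(x)=1$ for $\nu$-a.e.\ $y$, so the exponential term is again $1$. The triple $(\tilde\varphi^n,\widehat\psi^{n+1},\widehat h^n)$ differs from $\xi^{n+\frac13}$ by the element $(-a^n\cdot x+b^n,\,a^n\cdot y-b^n,\,-a^n)\in\Acal$, so it has the same $u$ and the same value of $F$; this is exactly the rewriting \cref{eq:hat_psi}. Therefore
\[
F(\xi^{n+\frac13})=1-\int\tilde\varphi^n\dd\mu-\int\widehat\psi^{n+1}\dd\nu=1-\int\tilde\varphi^n\dd\mu ,
\]
again using \cref{eq:centering}. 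The bounds then follow as before: $F^\ast\le F(\xi^{n+\frac13})\le F(\xi^n)\le F(\xi^0)$.

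\emph{Main point to check.} The only real step is verifying that the shift by an element of $\Acal$ leaves both terms of $F$ unchanged. The exponential term is unchanged because $u$ is unchanged. For the linear terms, the shift changes $\int\varphi\dd\mu+\int\psi\dd\nu$ by
\[
-a\cdot m_\mu+a\cdot m_\nu=0,
\]
since the convex order forces $m_\mu=m_\nu$. Everything else is bookkeeping.
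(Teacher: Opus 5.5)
Your proof is correct and follows the paper's argument. At $\xi^{n+\frac23}$ (resp.\ $\xi^{n+\frac13}$) the exponential term has mass one, so $\Acal$-invariance and the centering of $\widehat\psi^{n+1}$ give $F=1-\int\widehat\varphi^{n+1}\dd\mu$ (resp.\ $1-\int\tilde\varphi^{n}\dd\mu$), and monotonicity together with $F\ge F^\ast$ on $\Hcal^\infty$ yield both bounds; you also spell out the second display and the invariance check via $m_\mu=m_\nu$, which the paper leaves implicit.
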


\begin{proof}
    As $\widehat{\psi}^{n+1}$ is centered, it holds
    $F(\widehat\xi^{n+\frac23})
=
1-
\int \widehat\varphi^{n+1}\,\dd\mu$. The monotonicity of $F$ along the iterates in \cref{eq:monotonicity} and the invariance of $F$ by translations in $\Acal$, yields $$F^\ast  \le 1-\int  \widehat\varphi^{n+1}\,\dd\mu \le F(\xi^0),$$ which corresponds to the first display. The proof of the second display is similar.
\end{proof}

We can now prove that our normalized iterates are uniformly bounded for the $L^1$ norm.

\begin{proposition}
\label{prop:L1}
Assume that $2\tau D_{\nu}^2 \leq 1$.
Let  $C$ and $\theta$ be as in \cref{lem:coercivity}, set
$ C_{\mathrm{av}}=\max\{|1-F(\xi^0)|,\vert 1-\inf_{\Hcal^\infty} F\vert\}$, and define
$$C_h=2\frac{F(\xi^0)+ C}{\theta},
\; \; 
C_\varphi=2D_\mu C_h+2M_c+C_{\mathrm{av}},
\; \;
U_\psi=F(\xi^0)-1+D_{\nu} C_h+M_c, 
\; \; 
C_\psi=2U_\psi.$$ 
Then, for all $n \ge 0$, it holds
\begin{equation}
\|\widehat h^n\|_{L^1(\mu;\R^d)}
\le
C_h, \qquad \|\widehat\varphi^{n+1}\|_{L^1(\mu)}
\le
C_\varphi, \qquad \|\widehat\psi^{n+1}\|_{L^1(\nu)} \le C_\psi,
\end{equation}
and $$\widehat\psi^{n+1}(y)
\le U_\psi, \qquad \text{for} \quad \nu\text{-a.e. }  y \in \R^d.$$
\end{proposition}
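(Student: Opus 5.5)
The plan is to prove the four bounds in order: first $\widehat h^n$ through coercivity, then the pointwise upper bound on $\widehat\psi^{n+1}$, then its $L^1$ norm through centering, and finally $\widehat\varphi^{n+1}$ through a pointwise two-sided estimate combined with the average bounds of \cref{lem:control_integrals_ph}.

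\textbf{Step 1: the bound on $\widehat h^n$.} Apply \cref{lem:coercivity} at $\xi^{n+\frac23}$. Monotonicity (\cref{lem:descent}) gives $F(\xi^{n+\frac23})\le F(\xi^0)$. Since $\widehat h^n=h^n-a^n$ and $F$ is $\Acal$-invariant,
\[
\theta\inf_{a}\|\widehat h^n-a\|_{L^1(\mu)}\le F(\xi^0)+C .
\]
Because $\widehat h^n$ is centered, $\|\widehat h^n\|_{L^1}\le 2\inf_a\|\widehat h^n-a\|_{L^1}$: take the near-optimal $a$ and note $\|a\|=\|\int(a-\widehat h^n)\dd\mu\|\le\|\widehat h^n-a\|_{L^1}$. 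This yields $C_h$.

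\textbf{Step 2: the upper bound on $\widehat\psi^{n+1}$.} Use \cref{eq:hat_psi} and Jensen's inequality against $\mu$:
\[
\widehat\psi^{n+1}(y)\le-\int\tilde\varphi^n\dd\mu-\int\widehat h^n(x)\cdot(y-x)\dd\mu+M_c .
\]
The first term is at most $F(\xi^0)-1$ by \cref{lem:control_integrals_ph}. For the second, $\widehat h^n$ is centered, so $\int\widehat h^n\cdot y\,\dd\mu=0$ and the remainder is $\int\widehat h^n\cdot x\,\dd\mu$. Subtracting any point $y_0$ of $X$, this is bounded by $D_\mu C_h$. The statement has $D_\nu C_h$, which also works: $X\subset\operatorname{conv}(Y)$ gives $D_\mu\le D_\nu$. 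Alternatively, pair $\widehat h^n(x)$ with $(y-x)$, use $|y-x|\le D_\nu$ on $Y\times X$, and integrate. The result is $U_\psi$.

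\textbf{Step 3: the $L^1$ bound on $\widehat\psi^{n+1}$.} Since $\int\widehat\psi^{n+1}\dd\nu=0$,
\[
\|\widehat\psi^{n+1}\|_{L^1}=2\int(\widehat\psi^{n+1})^+\dd\nu\le 2U_\psi^+ .
\]
This needs $U_\psi\ge0$, which holds whenever the bound is nonvacuous, since the centered function $\widehat\psi^{n+1}$ has some value $\ge 0$. This gives $C_\psi$.

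\textbf{Step 4: the bound on $\widehat\varphi^{n+1}$.} This step is the main obstacle, because it needs a two-sided pointwise estimate. From \cref{eq:hat_varphi}, Jensen's inequality gives the upper bound
\[
\widehat\varphi^{n+1}(x)\le-\int\widehat\psi^{n+1}\dd\nu-\widehat h^n(x)\cdot(m_\nu-x)+M_c\le D_\nu\|\widehat h^n(x)\|+M_c .
\]
The lower bound comes from bounding the exponent: $\widehat\psi^{n+1}\le U_\psi$ gives
\[
\widehat\varphi^{n+1}(x)\ge-U_\psi-D_\nu\|\widehat h^n(x)\|-M_c .
\]
Using these pointwise bounds directly would bring a $U_\psi$ term into the estimate, and $C_\varphi$ does not contain one. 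Instead, I will use the average bounds \eqref{eq:mean_phi_bounds} to control $\int\widehat\varphi^{n+1}\dd\mu$. The upper bound $g(x):=D\|\widehat h^n(x)\|+M_c$ (with $D=D_\nu$, or $D_\mu$ as in the statement) gives $(\widehat\varphi^{n+1})^+\le g$. Then
\[
\|\widehat\varphi^{n+1}\|_{L^1}=2\int(\widehat\varphi^{n+1})^+\dd\mu-\int\widehat\varphi^{n+1}\dd\mu\le 2\int g\,\dd\mu+C_{\mathrm{av}} .
\]
This is at most $2DC_h+2M_c+C_{\mathrm{av}}$, matching $C_\varphi$. The constant $2D_\mu$ in the statement should follow by writing $m_\nu-x=(m_\nu-m_\mu)+(m_\mu-x)$ and using that $\widehat h^n$ is centered, together with $m_\mu=m_\nu$. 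This removes the first piece and leaves $|m_\mu-x|\le D_\mu$.
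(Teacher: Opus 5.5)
Your proposal is correct and follows essentially the paper's argument: coercivity plus centering for $\widehat h^n$, Jensen on \eqref{eq:hat_psi} with \cref{lem:control_integrals_ph} for the pointwise bound on $\widehat\psi^{n+1}$ (your centering trick there even gives the sharper $D_\mu C_h$), and the identity $\|f\|_{L^1}=2\int f_+-\int f$ combined with the average bounds to get the $L^1$ bounds on $\widehat\psi^{n+1}$ and $\widehat\varphi^{n+1}$. One small correction in Step 4: the $D_\mu$ constant comes purely pointwise from $m_\nu=m_\mu\in\operatorname{conv}(X)$, which is how the paper gets it, so the centering of $\widehat h^n$ plays no role in that step.
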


\begin{proof}

Since $2\tau D_{\nu}^2 \leq 1$, by \cref{lem:descent} we have $F(\widehat{\xi}^n) = F({\xi}^n)\leq F({\xi}^0)$. Thus, \cref{lem:coercivity} yields
\begin{equation*}
    \inf_{a\in\R^d}
\|\widehat h^n-a\|_{L^1(\mu)}
\le C_h/2.
\end{equation*}
 Since $\widehat h^n$ is centered, for $a\in \R^d$ we have
\[
|a|
=
\left|
\int (a-\widehat h^n)\,\dd\mu
\right|
\le
\|\widehat h^n-a\|_{L^1(\mu)},
\]
from which follows that
\begin{equation*}
    \|\widehat h^n\|_{L^1(\mu;\R^d)} \le |a| +  \|\widehat h^n - a\|_{L^1(\mu;\R^d)}
\le  2 \|\widehat h^n - a\|_{L^1(\mu;\R^d)}.
\end{equation*}
Since this is true for any $a\in \R^d$ we get
\begin{equation*}
    \|\widehat h^n\|_{L^1(\mu;\R^d)} \leq C_h
\end{equation*}

Applying Jensen's inequality to \cref{eq:hat_varphi} and using \cref{lem:control_integrals_ph} yields
\begin{equation*}
\begin{aligned}
    \widehat\varphi^{n+1}(x)
&\le
-
\int \widehat\psi^{n+1}\,\dd\nu
-
\widehat h^n(x)\cdot(m_\nu-x)+\int c(x,y) \dd\nu(y)
\le 
D_\mu\Vert\widehat h^n(x)\Vert +M_c,
\end{aligned}
\end{equation*}
which proves $
\int (\widehat\varphi^{n+1})_+\,\dd\mu
\le
D_\mu C_h+M_c$. 
Then,
\[
\begin{aligned}
\|\widehat\varphi^{n+1}\|_{L^1(\mu)}
&=
2\int (\widehat\varphi^{n+1})_+\,\dd\mu
-
\int \widehat\varphi^{n+1}\,\dd\mu 
\le
2D_\mu C_h+2M_c+C_{\mathrm{av}} = C_\varphi.
\end{aligned}
\]
Similarly, applying Jensen's inequality to \cref{eq:hat_psi}, by \cref{lem:control_integrals_ph}, we obtain
\[
\begin{aligned}
\widehat\psi^{n+1}(y)
&\le
-\int \tilde\varphi^n\,\dd\mu
+D_{\nu} C_h + M_c
\le F(\xi^0)-1+D_{\nu} C_h + M_c= U_\psi.
\end{aligned}
\]
Since $\widehat\psi^{n+1}$ is centered,
\begin{equation*}
\int  |\widehat\psi^{n+1}| \, \dd \nu = \int  2\widehat\psi^{n+1}_+ - \widehat\psi^{n+1} \, \dd \nu = 2 \int  \widehat\psi^{n+1}_+\le 2 U_\psi = C_\psi,
\end{equation*}
which concludes.
\end{proof}

We now upgrade the uniform $L^1$ bounds of our iterates to uniform $L^\infty$ bounds. To do so we rely on \cref{prop:pullback} and a careful analysis of the gradient step. The gradient is exactly the martingale residual and the argument will hinge on it going to $0$ fast enough. Thus we first prove that the martingale residuals are square-summable.

\begin{lemma}\label{lem:summability}
Assume that $2\tau D_{\nu}^2 \leq 1$, then
\begin{equation}
\sum_{n=0}^{\infty}
\|R^n\|_{L^2(\mu;\R^d)}^2
\le
\frac{2}{\tau}\bigl(F(\xi^0)-F^*\bigr)
<\infty.
\label{eq:R_square_summability}
\end{equation}
Setting $\overline R^n:=\int R^n\,\dd\mu$, we get
\begin{equation}
\sum_{n=0}^{\infty}|\overline R^n|^2 \le \frac{2}{\tau}\bigl(F(\xi^0)-F^*\bigr) <\infty.
\label{eq:mean_R_square_summability}
\end{equation}
\end{lemma}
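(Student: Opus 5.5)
The plan is a telescoping argument built on the descent estimate of \cref{lem:descent}. Since $2\tau D_\nu^2\le 1$, \cref{eq:stric_grad_decrease} together with the monotonicity chain \cref{eq:monotonicity} gives, for every $n\ge 0$,
\begin{equation*}
\frac{\tau}{2}\|R^n\|_{L^2(\mu;\R^d)}^2 \le F(\xi^{n+\frac23})-F(\xi^{n+1}) \le F(\xi^n)-F(\xi^{n+1}),
\end{equation*}
where we used $R^n=\nabla_hF(\xi^{n+\frac23})$.

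Summing over $n=0,\dots,N$ makes the right-hand side telescope to $F(\xi^0)-F(\xi^{N+1})$. By \cref{prop:well-defined}, every iterate $\xi^{N+1}$ lies in $\Hcal^\infty$, so $F(\xi^{N+1})\ge F^\ast=\inf_{\Hcal^\infty}F$. This infimum is finite by \cref{lem:coercivity}. Hence every partial sum is bounded by $\frac{2}{\tau}(F(\xi^0)-F^\ast)<\infty$. Letting $N\to\infty$ yields \cref{eq:R_square_summability}, since all the terms are nonnegative.

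For the averaged residuals, apply Jensen's inequality (or Cauchy--Schwarz) with respect to the probability measure $\mu$:
\begin{equation*}
|\overline R^n|^2=\Big|\int R^n\,\dd\mu\Big|^2\le \int |R^n|^2\,\dd\mu=\|R^n\|^2_{L^2(\mu;\R^d)}.
\end{equation*}
Summing this termwise bound over $n$ and using \cref{eq:R_square_summability} gives \cref{eq:mean_R_square_summability}.

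There is no genuine obstacle here. The only points requiring care are checking that the lower bound $F^\ast$ is the infimum over $\Hcal^\infty$, which contains all iterates, and that it is finite; both are supplied by \cref{prop:well-defined} and \cref{lem:coercivity}.
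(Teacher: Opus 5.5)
Your proof is correct and is the paper's argument: the per-step descent bound from \cref{lem:descent} telescopes against the lower bound $F^\ast$, and Jensen's inequality transfers the bound to the averaged residuals $\overline R^n$. Your extra checks, that all iterates lie in $\Hcal^\infty$ and that $F^\ast$ is finite (via \cref{prop:well-defined} and \cref{lem:coercivity}), make explicit what the paper leaves implicit.
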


\begin{proof}
    By \cref{lem:descent}, for all $n\ge 0$, it holds \[
\frac{\tau}{2}
\|R^n\|_{L^2(\mu;\R^d)}^2
\le
F(\xi^n)-F(\xi^{n+1}).
\] Summing the previous display over $n$ telescopes and yields
\cref{eq:R_square_summability}. Jensen's inequality yields
$|\overline R^n|^2\le\|R^n\|_{L^2(\mu;\R^d)}^2$, proving the second inequality.
\end{proof}

We are now in a position to prove our uniform $L^\infty$ bounds.

\begin{proposition}\label{prop:Linfty}
    Assume that $2\tau D_{\nu}^2 \leq 1$.
    Let $\rho$ be as in \cref{lem:upper-bound-zpsih} and  $C(a,b,r)$ as in \cref{prop:pullback}. Set  $C_{\mathrm{pb}} := C(C_\psi+M_c,C_\psi+ M_c,\rho)$, $M_{\mathrm{pb}} := \rho/4$, and fix $C'\in(0,M_{\mathrm{pb}})$. Define 
\begin{equation*}
M_0
=
\max\left\{
C_{\mathrm{pb}},
\frac{2\tau D_{\nu}^2}{M_{\mathrm{pb}}-C'}
\right\}, \quad M_h
=
\max\bigl\{
\|\widehat h^0\|_{L^\infty(\mu;\R^d)},\;
M_0+2\tau D_{\nu}
\bigr\}
+
\frac{4D_{\nu}\bigl(F(\xi^0)-F^*\bigr)}{(C')^2},
\end{equation*}
and set 
$$M_\psi=2M_hD_{\nu}+2M_c,
\qquad
M_\varphi=\max\{M_\psi+M_hD_{\nu}+M_c,\,M_hD_\mu+M_c\}.$$
Then, for all $n \ge 0$, it holds 
$$\|\widehat h^n\|_{L^\infty}\le M_h,
\qquad \|\widehat\psi^{n+1}\|_{L^\infty}\le M_\psi, \qquad \text{and }  \qquad \|\widehat\varphi^{n+1}\|_{L^\infty}\le M_\varphi.$$
\end{proposition}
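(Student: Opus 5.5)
The proof has three parts: first bound $\widehat h^n$ in $L^\infty$, then deduce the bounds on $\widehat\psi^{n+1}$ and $\widehat\varphi^{n+1}$ from the explicit Sinkhorn formulas \cref{eq:hat_psi,eq:hat_varphi}. Only the first part requires real work.

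\textbf{Recursion for $\widehat h^n$.} Since $\widehat h^{n}=h^n-a^n$ and $a^{n+1}=a^n-\tau\overline R^n$, we get
\[
\widehat h^{n+1}(x)=\widehat h^n(x)-\tau\bigl(R^n(x)-\overline R^n\bigr).
\]
By translation invariance, $R^n(x)=\nabla_hF(Z_{\widehat\psi^{n+1},\widehat h^n},\widehat\psi^{n+1},\widehat h^n)(x)$, because $\widehat\varphi^{n+1}=Z_{\widehat\psi^{n+1},\widehat h^n}$ by \cref{eq:hat_varphi}. By \cref{prop:L1}, $\|\widehat\psi^{n+1}\|_{L^1}\le C_\psi$ and $(\widehat\psi^{n+1})^+\le U_\psi\le C_\psi$ (note $C_\psi=2U_\psi$). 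Hence \cref{rem:PL_ball} combined with \cref{prop:pullback} gives
\[
\widehat h^n(x)\cdot R^n(x)\ge M_{\mathrm{pb}}\|\widehat h^n(x)\|\qquad\text{whenever }\|\widehat h^n(x)\|\ge C_{\mathrm{pb}}.
\]
We also have $\|R^n\|_\infty\le D_\nu$ from \cref{prop:bounded-gradients}, so that $|\overline R^n|\le D_\nu$.

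\textbf{Pointwise estimate.} Fix $x$ and expand
\[
\|\widehat h^{n+1}(x)\|^2=\|\widehat h^n(x)\|^2-2\tau\,\widehat h^n(x)\cdot R^n(x)+2\tau\,\widehat h^n(x)\cdot\overline R^n+\tau^2\|R^n(x)-\overline R^n\|^2.
\]
When $\|\widehat h^n(x)\|\ge M_0$, this gives
\[
\|\widehat h^{n+1}\|^2\le \|\widehat h^n\|^2-2\tau\|\widehat h^n\|\bigl(M_{\mathrm{pb}}-|\overline R^n|\bigr)+4\tau^2D_\nu^2 .
\]
The choice $M_0\ge 2\tau D_\nu^2/(M_{\mathrm{pb}}-C')$ is designed so that the quadratic term is absorbed: $4\tau^2D_\nu^2\le 2\tau C'\|\widehat h^n\|$ when $\|\widehat h^n\|\ge M_0$. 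If $|\overline R^n|$ were $\le M_{\mathrm{pb}}-C'$, the norm would then be non-increasing above $M_0$.

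\textbf{Handling large $|\overline R^n|$.} When $|\overline R^n|$ is large, I would instead use the cruder linear bound
\[
\|\widehat h^{n+1}(x)\|\le \|\widehat h^n(x)\|+\tau|\overline R^n|,
\]
which holds in the region $\|\widehat h^n\|\ge M_0$ because there $\widehat h^n\cdot R^n\ge0$ and the step $\tau R^n$ is small. The idea is that only the drift $\tau\overline R^n$ pushes outward, and this is controlled by \cref{lem:summability}. Steps with $|\overline R^n|> C'$ (say) number at most $\frac{2(F(\xi^0)-F^*)}{\tau (C')^2}$, so their total contribution is at most $\tau D_\nu\cdot\#\{n\}\le 2D_\nu(F(\xi^0)-F^*)/(C')^2$. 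On steps with $|\overline R^n|\le C'$, the net inward push $M_{\mathrm{pb}}-C'$ beats the absorbed quadratic term, so the norm decreases. Excursions start below $M_0$ and, since each step moves by at most $2\tau D_\nu$, enter the region above $M_0$ at a value at most $M_0+2\tau D_\nu$.

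An induction over $n$ then gives $\|\widehat h^n(x)\|\le \max\{\|\widehat h^0\|_\infty,M_0+2\tau D_\nu\}$ plus the accumulated bad drift, which is bounded by $4D_\nu(F(\xi^0)-F^*)/(C')^2$, i.e.\ $\le M_h$. The main obstacle is making this bookkeeping rigorous. Concretely, I would define $\Lambda_n:=\sum_{k<n}\tau|\overline R^k|\mathbf 1_{|\overline R^k|>C'}$ and prove by induction that $\|\widehat h^n(x)\|\le \max\{\|\widehat h^0\|_\infty,M_0+2\tau D_\nu\}+\Lambda_n$. The inductive step treats separately the cases where $\|\widehat h^n(x)\|$ lies above or below $M_0$. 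One must also check the condition $M_{\mathrm{pb}}-C'>0$ and the sign conventions in the absorption of the quadratic term; the extra factor $2$ in $M_h$ leaves some slack for this.

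\textbf{Bounds on $\widehat\psi^{n+1}$ and $\widehat\varphi^{n+1}$.} From \cref{eq:hat_psi}, changing $\widehat h^n$ by at most $M_hD_\nu$ in the exponent shows $\widehat\psi^{n+1}$ oscillates by at most $2M_hD_\nu+2M_c$. Since $\int\widehat\psi^{n+1}\,\dd\nu=0$, this gives $\|\widehat\psi^{n+1}\|_\infty\le M_\psi$. Plugging this into \cref{eq:hat_varphi} gives $\widehat\varphi^{n+1}\ge -M_\psi-M_hD_\nu-M_c$. The upper bound $\widehat\varphi^{n+1}\le M_hD_\mu+M_c$ comes from the Jensen bound in the proof of \cref{prop:L1}. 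Together these give $\|\widehat\varphi^{n+1}\|_\infty\le M_\varphi$.
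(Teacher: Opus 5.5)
Your proposal is correct and is essentially the paper's proof. It uses the same centered recursion for $\widehat h^n$, the pullback estimate of \cref{prop:pullback} on $\{\|\widehat h^n(x)\|\ge M_0\}$, and an induction in which only steps with $|\overline R^k|>C'$ can increase the norm, their number being controlled by \cref{lem:summability}; your weighting $\Lambda_n$ by $\tau|\overline R^k|$, instead of the paper's $2\tau D_\nu b_n$, is a harmless refinement.

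One slip to fix in your pointwise estimate: $M_0\ge 2\tau D_\nu^2/(M_{\mathrm{pb}}-C')$ absorbs $4\tau^2D_\nu^2$ into $2\tau(M_{\mathrm{pb}}-C')\|\widehat h^n(x)\|$, not into $2\tau C'\|\widehat h^n(x)\|$. So the non-increasing steps are those with $|\overline R^n|\le C'$, which is what your final bookkeeping correctly uses, and not those with $|\overline R^n|\le M_{\mathrm{pb}}-C'$.
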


\begin{proof}
    We prove the three inequalities from the left to the right. Since
\[
h^{n+1}=h^n-\tau R^n \quad \text{and} \quad a^{n+1}=a^n-\tau\overline R^n,
\]
it holds
\begin{equation}
\widehat h^{n+1}
=
\widehat h^n
-
\tau\bigl(R^n-\overline R^n\bigr).
\label{eq:centered_h_update}
\end{equation}
 We will show by induction that
\begin{equation}\label{eq:induction_statement}
\|\widehat h^n\|_{L^\infty(\mu;\R^d)}
\le
G+2\tau D_{\nu}\,b_n
\qquad\text{for every }n\ge0,
\end{equation}
where 
$b_n:=\#\{k<n:\vert\overline R^k\vert>C' \}$ and
$
G:=\max\bigl\{\|\widehat h^0\|_{L^\infty(\mu;\R^d)},\;
M_0+2\tau D_{\nu}\bigr\}$.
For $n=0$ the inequality holds by definition of $G$. Let $n \in \mathbb{N}$ be such that the inequality holds. Let  $x \in X$.
\paragraph{First case, $ \Vert \widehat h^n(x) \Vert \leq M_0$ or $b_{n+1} = b_n+1$.} Since $\Vert R^n-\overline{R}^n\Vert_{L^\infty(\mu;\R^d)} \leq 2 D_{\nu}$, \cref{eq:centered_h_update} implies 
\begin{equation*}
    \Vert\widehat h^{n+1}(x)\Vert\leq \Vert \widehat h^n(x) \Vert+2\tau D_{\nu}.
\end{equation*}
If $\Vert \widehat h^n(x) \Vert \leq M_0$ then
\begin{equation*}
    \Vert\widehat h^{n+1}(x)\Vert \leq M_0 +2\tau D_{\nu} \leq G + 2\tau D_{\nu}b_{n+1}
\end{equation*}
because $b_{n+1} \geq 0$. Else $b_{n+1} = b_{n}+1$ and thus by induction
\begin{equation*}
    \Vert\widehat h^{n+1}(x)\Vert\leq G + 2\tau D_{\nu}b_{n} +2\tau D_{\nu} \leq G + 2\tau D_{\nu}b_{n+1}.
\end{equation*}
In both cases $\Vert\widehat h^{n+1}(x)\Vert\leq G + 2\tau D_{\nu}b_{n+1}.$
\paragraph{Second case, $\Vert \widehat h^n(x) \Vert \geq M_0$ and $b_{n+1} = b_n$.}

In particular $\Vert \widehat h^n(x)\Vert \geq C_{\mathrm{pb}}$. Thus by \cref{prop:pullback}, which applies thanks to \cref{rem:PL_ball,prop:L1}, and invariance by $\Acal$ of $F$ we have
\begin{equation*}
   \widehat h^n(x)\cdot R^n(x) = \widehat h^n(x) \cdot \nabla_h F(\widehat \ph^{n+1},\widehat \psi^{n+1},\widehat h_n)(x) \geq M_{\mathrm{pb}}
\Vert \widehat h^n(x)\Vert.
\end{equation*}
Taking the square in \cref{eq:centered_h_update} and expanding it we obtain
\begin{equation}\label{eq:square_expansion}
\begin{aligned}
\Vert\widehat h^{n+1}(x)\Vert^2
&=
\Vert\widehat h^n(x)\Vert^2
-
2\tau\widehat h^n(x)\cdot R^n(x)
+
2\tau\widehat h^n(x)\cdot\overline R^n
+
\tau^2|R^n(x)-\overline R^n|^2
\\
&\le
\Vert\widehat h^n(x)\Vert^2
-
2\tau(M_{\mathrm{pb}}-C')\Vert\widehat h^n(x)\Vert
+
4\tau^2D_{\nu}^2,
\end{aligned}
\end{equation}
where we used $\vert \overline{R}^n\vert \leq C'$ because $b_{n+1}=b_n$ and $|R^n(x)|\le D_{\nu},|\overline R^n|\le D_{\nu}$ which is a consequence of \cref{prop:bounded-gradients}. As $\Vert\widehat h^n(x)\Vert\ge M_0 \ge (M_{\mathrm{pb}}-C')^{-1}2\tau D_{\nu}^2$, we get 
$$-
2\tau(M_{\mathrm{pb}}-C')\Vert\widehat h^n(x)\Vert
+
4\tau^2D_{\nu}^2 \le 0.$$ By \cref{eq:square_expansion}, this yields $\Vert\widehat h^{n+1}(x)\Vert
\le
\Vert\widehat h^n(x)\Vert \leq G +2\tau D_{\nu} b_{n+1}$ since $b_n = b_{n+1}$. 

Thus in any case we have
\begin{equation*}
    \Vert\widehat h^{n+1}(x)\Vert
\le
G+2\tau D_{\nu}\,b_{n+1},
\end{equation*}
which shows the result for the $L^\infty(\mu;\R^d)$ norm. It remains to get the upper bound
\begin{equation*}
    b_n \leq \frac{2\bigl(F(\xi^0)-F^*\bigr)}{\tau(C')^2}.
\end{equation*}
By \cref{lem:summability}, 
\[
(C')^2b_n 
\le
\sum_{k\in\mathbb{N}, \vert \overline{R}^k\vert> C'}|\overline R^k|^2
\le
\sum_{k=0}^\infty|\overline R^k|^2 \le \frac{2}{\tau}\bigl(F(\xi^0)-F^*\bigr) <\infty,
\]
which yields 
$$
 b_n \leq \frac{2\bigl(F(\xi^0)-F^*\bigr)}{\tau(C')^2}.
$$
Finally we have the desired conclusion for $\widehat h^n$:
\begin{equation*}
    \|\widehat h^n\|_{L^\infty}\le \max\bigl\{
\|\widehat h^0\|_{L^\infty(\mu;\R^d)},\;
M_0+2\tau D_{\nu}
\bigr\}
+
\frac{4D_{\nu}\bigl(F(\xi^0)-F^*\bigr)}{(C')^2}=:M_h
\end{equation*}

We now derive uniform bounds for the normalized potential updates, starting with $\psi$.
Recall
\[
e^{-\widehat\psi^{n+1}(y)}
=
\int 
\exp\!\left(
\tilde\varphi^n(x)
+
\widehat h^n(x)\cdot(y-x)-c(x,y)
\right)\dd \mu(x),
\]
For $y,z\in Y$ and $x \in X$, the two terms in the exponential differ by 
\[
\bigl[\tilde\varphi^n(x)+\widehat h^n(x)\cdot(y-x)-c(x,y)\bigr]
-
\bigl[\tilde\varphi^n(x)+\widehat h^n(x)\cdot(z-x)-c(x,z)\bigr]
=
\widehat h^n(x)\cdot(y-z)-c(x,y)+c(x,z).
\]
Since 
$
\left|
\widehat h^n(x)\cdot(y-z)-c(x,y)+c(x,z)
\right|
\le
M_h|y-z|+2M_c,
$
integrating the corresponding exponential inequalities yields
\[
e^{-M_h|y-z|-2M_c}e^{\widehat\psi^{n+1}(z)}
\le
e^{\widehat\psi^{n+1}(y)}
\le
e^{M_h|y-z|+2M_c}e^{\widehat\psi^{n+1}(z)}.
\]
By taking the logarithm,
\[
\left|
\widehat\psi^{n+1}(y)
-
\widehat\psi^{n+1}(z)
\right|
\le
M_h|y-z|+2M_c
\le
2M_hD_{\nu}+2M_c = M_\psi.
\]
Since \(\widehat\psi^{n+1}\) is centered, \(|\widehat\psi^{n+1}|\) is bounded by its
oscillation, which proves the uniform bound on the normalized $\psi$-iterates. Regarding the normalized $\varphi$-iterates, we have
\[
-\widehat\varphi^{n+1}(x) = \log \int 
\exp\!\left(
\widehat\psi^{n+1}(y)
+
\widehat h^n(x)\cdot(y-x)-c(x,y)
\right)\dd \nu(y)
\le M_\psi+M_hD_{\nu}+M_c,
\]
and therefore
$
\widehat\varphi^{n+1}(x)
\ge
-M_\psi-M_hD_{\nu}-M_c.
$
For the reverse inequality, Jensen's inequality and the centering of
\(\widehat\psi^{n+1}\) give
\[
\begin{aligned}
\widehat\varphi^{n+1}(x)
&\le
-
\int 
\left(
\widehat\psi^{n+1}(y)
+
\widehat h^n(x)\cdot(y-x)-c(x,y)
\right)\dd \nu(y)
\le
M_hD_\mu+M_c,
\end{aligned}
\]
which concludes the bounds for the normalized iterates.
\end{proof}

We now conclude this subsection by proving that our iterates are contained in a set $K_M$, where $M$ depends on the radius of the $L^\infty$  balls containing the renormalized iterates.

\begin{lemma}\label{lem:segments}
Let $M_\varphi$, $M_\psi$ and $M_h$ be defined as in
\cref{prop:Linfty}. Set
\[
A:=M_\varphi+M_\psi+M_hD_{\nu}+M_c, \quad \text{and} \quad
B:=\max\{4||u_{\xi^0}||_{L^{\infty}(\mu\otimes\nu)},A\}+\tau D_{\nu}^2.
\]
Then, for $M_u:=2B$ and every $n\geq 0$,
$
\xi^n, \xi^{n+\frac13}, 
\xi^{n+\frac23}\in K_{M_u}.
$
\end{lemma}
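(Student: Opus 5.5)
The plan rests on the $\Acal$-invariance of $u_\xi$: for every $l\in\Acal$ one has $u_{\xi+l}=u_\xi$, hence $K_M+\Acal=K_M$. It therefore suffices to bound $\|u\|_{L^\infty(\mu\otimes\nu)}$ at suitably normalized representatives of each partial iterate. Since $u_\xi$ is affine in $\xi$, for $\xi=(\varphi,\psi,h)\in\Hcal^\infty$ we have the pointwise bound
\[
|u_\xi(x,y)|\le \|\varphi\|_{L^\infty}+\|\psi\|_{L^\infty}+\|h\|_{L^\infty}D_\nu+M_c ,
\]
valid for $y\in Y$, $x\in X\subseteq \operatorname{conv}(Y)$, so that $|y-x|\le D_\nu$. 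We treat the three partial iterates separately.

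\emph{The iterate $\xi^{n+\frac23}$.} Since $u_{\xi^{n+\frac23}}=u_{\widehat\xi^{n+\frac23}}$, \cref{prop:Linfty} and the bound above give $\|u_{\xi^{n+\frac23}}\|_\infty\le A\le B\le M_u$ for all $n\ge0$.

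\emph{The iterate $\xi^{n+1}$ for $n\ge0$.} We have $\xi^{n+1}=\xi^{n+\frac23}-\tau(0,0,R^n)$, so
\[
u_{\xi^{n+1}}=u_{\xi^{n+\frac23}}-\tau R^n(x)\cdot(y-x).
\]
\cref{prop:bounded-gradients} applies because $\varphi^{n+1}=Z_{\psi^{n+1},h^n}$, and it gives $|R^n|\le D_\nu$. Hence $\|u_{\xi^{n+1}}\|_\infty\le A+\tau D_\nu^2\le B$. For $n=0$ the iterate is $\xi^0$ itself, and $\|u_{\xi^0}\|_\infty\le B$ by definition of $B$. So every $\xi^n$ lies in $K_B\subseteq K_{M_u}$.

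\emph{The iterate $\xi^{n+\frac13}=(\varphi^n,\psi^{n+1},h^n)$.} This is the main obstacle: $\varphi^n$ comes from the previous step, with the previous $\psi^n$, so it is not normalized against $\psi^{n+1}$. We write
\[
u_{\xi^{n+\frac13}}=u_{\xi^{n}}+\psi^{n+1}(y)-\psi^n(y).
\]
The key step is to bound $\psi^{n+1}-\psi^n$ in $L^\infty(\nu)$ by a multiple of $\|u_{\xi^n}\|_\infty$. From the $\psi$-update,
\[
\psi^{n+1}(y)-\psi^n(y)=-\log\int e^{u_{\xi^n}(x,y)}\dd\mu(x).
\]
Its absolute value is therefore at most $\|u_{\xi^n}\|_\infty$. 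Consequently
\[
\|u_{\xi^{n+\frac13}}\|_\infty\le 2\|u_{\xi^n}\|_\infty\le 2B=M_u .
\]
For $n=0$ this uses $\|u_{\xi^0}\|_\infty\le B$; for $n\ge1$ it uses the bound on $\xi^n$ from the previous step.

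Two points remain to be checked. First, that $\psi^n$ is indeed defined as an exact $\psi$-update for $n\ge1$, so that the displayed identity holds; for $n=0$ the direct estimate above suffices, since it does not use how $\psi^0$ was produced. Second, that the factor $4$ in the definition of $B$ leaves enough slack: it is more than needed here, and only the inequality $2B\ge 2\|u_{\xi^0}\|_\infty$ is actually required. Combining the three cases yields $\xi^n,\xi^{n+\frac13},\xi^{n+\frac23}\in K_{M_u}$ for every $n\ge0$.
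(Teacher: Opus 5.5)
Your proof is correct and follows the paper's argument: you bound $u_{\xi^{n+\frac23}}$ by $A$ via \cref{prop:Linfty} and $\Acal$-invariance, add $\tau D_\nu^2$ for the gradient step using \cref{prop:bounded-gradients}, and handle $\xi^{n+\frac13}$ through $u_{\xi^{n+\frac13}}=u_{\xi^n}-\log\int e^{u_{\xi^n}(z,\cdot)}\dd\mu(z)$, which at most doubles the bound. The only difference is at $n=0$: the paper chains two Sinkhorn doublings from $\xi^0$, which is where the factor $4$ comes from, whereas you apply \cref{prop:Linfty} already at $n=0$; its statement allows this, so the factor $4$ is indeed superfluous, and your first "point to check" is moot, because $\psi^{n+1}-\psi^n=-\log\int e^{u_{\xi^n}(x,\cdot)}\dd\mu(x)$ holds for any $\psi^n$.
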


\begin{proof}
By \cref{prop:Linfty} and the invariance
of $u_\xi$ under the affine normalization, we have,
for every $n\geq 1$,
$
\|u_{\xi^{n+\frac23}}\|_{L^\infty(\mu\otimes\nu)}= \|u_{\widehat\xi^{n+\frac23}}\|_{L^\infty(\mu\otimes\nu)}
\leq A.
$
Moreover, the gradient update gives
\[
u_{\xi^{n+1}}(x,y)
=
u_{\xi^{n+\frac23}}(x,y)
-\tau R^n(x)\cdot(y-x).
\]
Since $\|R^n\|_{L^\infty(\mu)}\leq D_{\nu}$
by \cref{prop:bounded-gradients},
we obtain for $n \geq 1$
\begin{equation} \label{eq:bound_u_xi}
\|u_{\xi^{n+1}}\|_{L^\infty(\mu\otimes\nu)}
\leq
\|u_{\xi^{n+\frac23}}\|_{L^\infty(\mu\otimes\nu)}
+\tau D_{\nu}^2 \leq A+\tau D_{\nu}^2. 
\end{equation}
To control the first iteration, observe that
$\|u_{\xi^0}\|_{L^\infty(\mu\otimes\nu)}<\infty$,
since $\xi^0\in\mathcal H^\infty$.
By the two Sinkhorn updates,
\begin{equation}\label{eq:bound_u_xi_n0}
\begin{aligned}
u_{\xi^{1/3}}(x,y)
=u_{\xi^0}(x,y)
-\log\int  e^{u_{\xi^0}(z,y)}\dd\mu(z)
&\implies
\|u_{\xi^{1/3}}\|_{L^\infty(\mu\otimes\nu)}\leq 2\|u_{\xi^0}\|_{L^\infty(\mu\otimes\nu)},
\\
u_{\xi^{2/3}}(x,y)
=u_{\xi^{1/3}}(x,y)
-\log\int  e^{u_{\xi^{1/3}}(x,z)}\dd\nu(z)
&\implies
\|u_{\xi^{2/3}}\|_{L^\infty(\mu\otimes\nu)}\leq 4\|u_{\xi^0}\|_{L^\infty(\mu\otimes\nu)}.
\end{aligned}
\end{equation}
The gradient update then yields
\[
\|u_{\xi^1}\|_{L^\infty(\mu\otimes\nu)}
\leq 4\|u_{\xi^0}\|_{L^\infty(\mu\otimes\nu)}+\tau D_{\nu}^2.
\]
Combining estimates in \cref{eq:bound_u_xi_n0} with \cref{eq:bound_u_xi}
we conclude that
$
\|u_{\xi^n}\|_{L^\infty(\mu\otimes\nu)}\leq B,
$
and that
$
\|u_{\xi^{n+2/3}}\|_{L^\infty(\mu\otimes\nu)}\leq B
$
for
$
n\geq0.
$
It remains to control $\xi^{n+\frac13}$ for $n\geq0$.
By the $\psi$-update, we similarly obtain
\[
u_{\xi^{n+\frac13}}(x,y)
=
u_{\xi^n}(x,y)
-\log\int  e^{u_{\xi^n}(z,y)}\dd\mu(z)
\implies
\|u_{\xi^{n+\frac13}}\|_{L^\infty(\mu\otimes\nu)}
\leq M_u.
\]
\end{proof}

\subsection{Proof of Convergence}\label{sec:proof_of_convergence}
We have shown in \cref{sec:bounded-iterates} that $\xi^n \in K_{M_u}$. This fact in conjunction with \cref{sec:properties} which ensures that $F$ satisfies all the conditions of \cref{thm:abstract}, allows us to apply it to show convergence of SISTA.  Throughout, we assume \cref{ass:qualification} holds, $\xi^0\in\Hcal^\infty$, and $2\tau D_{\nu}^2\leq 1$. As SISTA updates $\psi$ then $\varphi$, we take $N=3$ and
\[
\Hcal_1:=L^2(\nu),\qquad\Hcal_2:=L^2(\mu),\qquad\Hcal_3:=L^2(\mu;\R^d),
\]
so that $\Hcal=\Hcal_2\times\Hcal_1\times\Hcal_3$ is the space of \cref{sec:setting} with its first two components swapped, with $\xi=(\psi,\varphi,h)$, and $J_1\psi=(\psi,0,0)$, $J_2\varphi=(0,\varphi,0)$, $J_3h=(0,0,h)$. The function $F:\Hcal\to(-\infty,+\infty]$ is the dual functional \cref{eq:main_dual}. The $\psi$- and $\varphi$-updates of \cref{alg:SISTA} are exact steps and the $h$-update is a gradient step with step size $\tau$. Formally, $\mathcal{E}=\{1,2\}$ and $\mathcal G=\{3\}$. We take $K:=K_{M_u}$, where $K_M$ is defined in \cref{eq:KM} and $M_u$ as given by \cref{lem:segments}. We define 
$\nabla F(\xi)=(\nabla_\psi F(\xi),\nabla_\varphi F(\xi),\nabla_hF(\xi))\in\Hcal$, where we defined
\begin{equation}\label{eq:gradient_def}
    \begin{cases}
    \nabla_\psi F(\xi):=\int e^{u_\xi(x,\cdot)}\dd \mu(x)-1\\
    \nabla_\varphi F(\xi):=\int e^{u_\xi(\cdot,y)}\dd \nu(y)-1\\
    \nabla_hF(\xi):=\int (y-\cdot)\,e^{u_\xi(\cdot,y)}\dd \nu(y)
\end{cases}.
\end{equation}
We start by checking that the \cref{point:finite_min}--\cref{point:strong_block_convexity} are satisfied.

\begin{lemma}\label{lem:SISTA-A}
Define
\[
L:=3e^{M_u}\max\{1,D_{\nu}^2\},\qquad\kappa:=\min\left\{
\frac12,
\frac{\lambda_\nu}{1+2D_\mu^2}
\right\},\qquad\sigma:=\kappa e^{-M_u},\qquad m:=e^{-M_u}.
\]
Then $K$ is a closed convex subset of $\Hcal$ on which $F$ is finite, and properties \cref{point:finite_min}--\cref{point:strong_block_convexity} hold.
\end{lemma}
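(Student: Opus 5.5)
We verify each item in turn, drawing on \cref{lem:prop_KM} and the estimates of \cref{sec:properties}. Closedness, convexity and finiteness of $F$ on $K=K_{M_u}$ are exactly the first point of \cref{lem:prop_KM}. Since $K\subset\Hcal^\infty$ by its second point, $F$ is twice Fréchet differentiable near every $\xi\in K$ by \cref{prop:well-defined}. Its gradient is given by \cref{eq:gradient_def}, with $\nabla F(\xi)\in\Hcal^\infty\subset\Hcal$.

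For \cref{point:finite_min}, we use $\inf_K F\ge\inf_{\Hcal^\infty}F>-\infty$ by \cref{lem:coercivity}. Moreover $K\neq\emptyset$ (it contains $\xi^0$ by \cref{lem:segments}), so $\inf_K F\in\R$.

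For \cref{point:smooth}, fix $\xi,\xi'\in K$. By convexity of $K$, the segment $\xi_s$ between them lies in $K$, and $\xi'-\xi\in\Hcal^\infty$. We write
\[
\nabla F(\xi')-\nabla F(\xi)=\int_0^1 D^2F(\xi_s)[\xi'-\xi,\cdot]\,\dd s .
\]
Since $D^2F(\xi_s)$ is a positive semidefinite bilinear form, Cauchy--Schwarz for it gives $|D^2F(\xi_s)[a,b]|\le \sqrt{D^2F(\xi_s)[a,a]\,D^2F(\xi_s)[b,b]}$. Combined with \cref{prop:smoothness-global}, this yields $|D^2F(\xi_s)[a,b]|\le L\|a\|\|b\|$ for $a,b\in\Hcal^\infty$. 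Taking the supremum over $b$ in the dense subspace $\Hcal^\infty$ gives $\|\nabla F(\xi')-\nabla F(\xi)\|_{\Hcal}\le L\|\xi'-\xi\|_{\Hcal}$.

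\cref{point:convex} is the step needing most care, because $\xi'$ ranges over all of $\Hcal$ and $F$ may equal $+\infty$ there. We argue pointwise: $t\mapsto e^t$ is convex, so
\[
e^{u_{\xi'}}\ge e^{u_\xi}+e^{u_\xi}(u_{\xi'}-u_\xi)\quad \mu\otimes\nu\text{-a.e.}
\]
Here $e^{u_\xi}$ is bounded and $u_{\xi'}-u_\xi\in L^1(\mu\otimes\nu)$ because $X,Y$ are compact and $\xi'-\xi\in\Hcal$. We therefore integrate, and add the linear terms $-\int\varphi\,\dd\mu-\int\psi\,\dd\nu$. Applying Fubini, the right-hand side becomes $F(\xi)+\langle\nabla F(\xi),\xi'-\xi\rangle_{\Hcal}$, with the gradient of \cref{eq:gradient_def}. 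This holds with the convention $F(\xi')=+\infty$ allowed.

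For \cref{point:PLI}, we apply \cref{prop:pli} with $M=M_u$, which is the stated inequality with $\sigma=\kappa e^{-M_u}$. For \cref{point:strong_block_convexity}, take $\xi,\xi'\in K$ with $\xi'-\xi$ supported in the $\psi$ or $\varphi$ block. Taylor's formula with integral remainder along the segment (again in $K$) gives
\[
F(\xi')=F(\xi)+\langle\nabla F(\xi),\xi'-\xi\rangle+\int_0^1(1-s)D^2F(\xi_s)[\xi'-\xi,\xi'-\xi]\,\dd s .
\]
Bounding the Hessian term from below by \cref{prop:block-strong-convexity} with $C_{\mathrm{bsc}}(M_u)=e^{-M_u}=m$ gives the factor $\frac m2\|\xi'-\xi\|^2$. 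The only delicate point throughout is checking that every Hessian evaluation occurs at points of $K_{M_u}$ and in directions in $\Hcal^\infty$. Convexity of $K$ and \cref{lem:prop_KM} guarantee both.
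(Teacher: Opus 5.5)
Your proposal is correct and follows the paper's proof item by item. Closedness, convexity and finiteness come from the properties of $K_M$. (A1) comes from the coercivity lower bound, (A2) from the Hessian upper bound on $K_M$, and (A3) from pointwise convexity of the exponential plus Fubini. (A4) is the Polyak--{\L}ojasiewicz proposition with $M=M_u$, and (A5) is the block strong convexity bound with $m=e^{-M_u}$.

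You also supply two steps the paper leaves implicit: the passage from the Hessian bound to a Lipschitz gradient, via Cauchy--Schwarz for the positive semidefinite form $D^2F(\xi_s)$ and density of $\Hcal^\infty$ in $\Hcal$, and the integral Taylor formula that converts the Hessian lower bound into the first-order inequality in (A5).
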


\begin{proof}
By \cref{lem:prop_KM}, $K$ is closed, convex, and $F$ is finite on it. By \cref{lem:segments} it contains the iterates. \Cref{point:finite_min} holds by finiteness of the problem, while  \cref{point:smooth} is \cref{prop:smoothness-global}. We now show \cref{point:convex}. Let $\xi=(\psi,\varphi,h)\in K$, $\xi'=(\psi',\varphi',h')\in\mathcal H$, and set $(\delta\psi,\delta\varphi,\delta h):=\xi'-\xi$. By convexity of the exponential, $e^{u_{\xi'}}\ge e^{u_\xi}+e^{u_\xi}(u_{\xi'}-u_\xi)$ pointwise. By \cref{eq:gradient_def}, integration against $\mu \otimes \nu$ and applying Fubini yields 
\begin{align*}
    \int e^{u_{\xi'}}\dd(\mu\otimes\nu) \ge \int &e^{u_{\xi}}\dd(\mu\otimes\nu) 
+\int\big(\nabla_\varphi F(\xi)+1\big)\,\delta\varphi\dd\mu
\\ &+\int\big(\nabla_\psi F(\xi)+1\big)\,\delta\psi\dd\nu
+\int\nabla_h F(\xi)\cdot\delta h\dd\mu,
\end{align*}
which by subtracting $\int \varphi' d \mu + \int \psi' d \nu$ rewrites
$F(\xi')-F(\xi) \ge \langle \nabla F(\xi),\xi'-\xi \rangle$ and shows \cref{point:convex}. Finally, \cref{point:PLI} is \cref{prop:pli}, and \cref{point:strong_block_convexity} is \cref{prop:block-strong-convexity}.
\end{proof}

We now check that Assumptions \cref{point:belongs}--\cref{point:lipschitz_on_segments} hold.

\begin{lemma}\label{lem:SISTA-I}
Let $n\ge0$.
\begin{enumerate}[label=(\roman*), ref=(\roman*)]
\item\label{SI:belongs} $\xi^{n+\frac k3}\in K$ for $k\in\{0,1,2\}$, which is \cref{point:belongs};
\item\label{SI:steps} For all $k\in\{1,2,3\}$, $\xi^{n+\frac {k}{3}}-\xi^{n+\frac {k-1}{3}}\in J_k(\Hcal_k).$ Moreover, $$\nabla_1F(\xi^{n+\frac13})=0,\quad \nabla_2F(\xi^{n+\frac23})=0, \quad \text{and }\quad\xi^{n+1}=\xi^{n+\frac23}-\tau J_3\nabla_3F(\xi^{n+\frac23});$$
\item\label{SI:lip} for $s\in[0,1]$, writing $\xi_s^{n+1}:=\xi^{n+\frac23}+s(\xi^{n+1}-\xi^{n+\frac23})$,
\[
\lVert\nabla_3F(\xi^{n+1}_s)-\nabla_3F(\xi^{n+\frac23})\rVert\le\tau^{-1}\lVert\xi^{n+1}_s-\xi^{n+\frac23}\rVert ,
\]
which is \cref{point:lipschitz_on_segments}.
\end{enumerate}
\end{lemma}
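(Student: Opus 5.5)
Item \ref{SI:belongs} is immediate from \cref{lem:segments}: it gives $\xi^n,\xi^{n+\frac13},\xi^{n+\frac23}\in K_{M_u}=K$ for every $n\ge0$. I only need to note that the coordinate swap in $\Hcal$ (ordering as $(\psi,\varphi,h)$) does not affect membership in $K_M$, since $u_\xi$ depends only on the triple and not on the ordering.

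For \ref{SI:steps}, I read off the block structure directly from \cref{alg:SISTA}. Passing from $\xi^n$ to $\xi^{n+\frac13}$ changes only $\psi$, from $\xi^{n+\frac13}$ to $\xi^{n+\frac23}$ only $\varphi$, and from $\xi^{n+\frac23}$ to $\xi^{n+1}$ only $h$. Hence the successive differences lie in $J_1(\Hcal_1)$, $J_2(\Hcal_2)$ and $J_3(\Hcal_3)$ respectively.

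The vanishing of the partial gradients is a direct computation with \cref{eq:gradient_def}.
\begin{itemize}
\item[] For $\psi$: plugging the $\psi$-update into $u_{\xi^{n+\frac13}}$ gives $\int e^{u_{\xi^{n+\frac13}}(x,y)}\dd\mu(x)=e^{\psi^{n+1}(y)}\cdot e^{-\psi^{n+1}(y)}=1$, so $\nabla_1F(\xi^{n+\frac13})=0$.
\item[] For $\varphi$: the same computation with the $\varphi$-update gives $\int e^{u_{\xi^{n+\frac23}}(x,y)}\dd\nu(y)=1$, so $\nabla_2F(\xi^{n+\frac23})=0$.
\item[] For $h$: the update $h^{n+1}=h^n-\tau R^n$ with $R^n=\nabla_hF(\xi^{n+\frac23})$ is literally $\xi^{n+1}=\xi^{n+\frac23}-\tau J_3\nabla_3F(\xi^{n+\frac23})$.
\end{itemize}
All these objects lie in $\Hcal^\infty$ by \cref{prop:well-defined}, so the integrals are well defined.

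For \ref{SI:lip}, I apply \cref{prop:lipschitz-gradients} with $\psi=\psi^{n+1}$, $h=h^n$ and $h'=h^n-s\tau R^n$. This is legitimate because $\varphi^{n+1}=Z_{\psi^{n+1},h^n}$ by definition of the $\varphi$-update. The proposition gives
\[
\|\nabla_3F(\xi^{n+1}_s)-\nabla_3F(\xi^{n+\frac23})\|\le D_\nu^2e^{D_\nu\|s\tau R^n\|_{L^\infty}}\,\|s\tau R^n\|_{L^2}.
\]
By \cref{prop:bounded-gradients}, $\|R^n\|_{L^\infty}\le D_\nu$, so the exponent is at most $\tau D_\nu^2\le\frac12$. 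The constant is therefore at most $D_\nu^2e^{1/2}$.

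It then remains to check that $D_\nu^2e^{\tau D_\nu^2}\le\tau^{-1}$. This is equivalent to $xe^x\le1$ with $x=\tau D_\nu^2\le\frac12$, which holds since $\tfrac12e^{1/2}<1$; it is exactly the inequality already used in \cref{lem:descent}. Finally, $\|s\tau R^n\|_{L^2}=\|\xi^{n+1}_s-\xi^{n+\frac23}\|$, which closes the estimate.

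The only delicate point is making sure the Lipschitz proposition applies at the normalized $\varphi=Z_{\psi,h}$ evaluated at the base point $h^n$ rather than at $h'$, which matches the statement of \cref{prop:lipschitz-gradients}. Everything else is bookkeeping.
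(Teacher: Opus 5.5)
Your proposal is correct and follows essentially the same route as the paper: (i) is read off from \cref{lem:segments}, and (ii) from the definition of the SISTA updates, where you simply make the vanishing of the partial gradients explicit. For (iii) you use \cref{prop:lipschitz-gradients} at the base point $h^n$ (legitimate since $\varphi^{n+1}=Z_{\psi^{n+1},h^n}$), together with \cref{prop:bounded-gradients} and the elementary bound $xe^x\le 1$ for $0\le x\le\frac12$.
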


\begin{proof}
By \cref{prop:well-defined}, all the partial iterates belong to $\Hcal^\infty$.

\cref{SI:belongs} This is \cref{lem:segments}.

\cref{SI:steps} This follows directly by definition of the SISTA.

\cref{SI:lip}  As $\varphi^{n+1}=Z_{\psi^{n+1},h^n}$, defining $Z_{\psi,h}$ as in \cref{eq:normalization-constant}, it holds  $$\xi^{n+\frac23}=(\psi^{n+1},Z_{\psi^{n+1},h^n},h^n) \quad \text{and}\quad \xi^{n+1}_s=(\psi^{n+1} Z_{\psi^{n+1},h^n},h^n-s\tau R^n),$$ with $R^n=\nabla_hF(\xi^{n+\frac23})$. By \cref{prop:bounded-gradients}, $\lVert R^n\rVert_{L^\infty(\mu;\R^d)}\le D_{\nu}$. Thus \Cref{prop:lipschitz-gradients} yields
$$
\lVert\nabla_3F(\xi_s)-\nabla_3F(\xi^{n+\frac23})\rVert\le 
D_{\nu}^2 e^{\Vert s\tau R^n\Vert_{L^\infty(\mu) }D_{\nu}} \Vert s\tau R^n\Vert_{L^2(\mu;\R^d)}
\le
D_{\nu}^2e^{\tau D_{\nu}^2}\lVert\xi^{n+1}_s-\xi^{n+\frac23}\rVert,
$$
where we used $\lVert\xi^{n+1}_s-\xi^{n+\frac23}\rVert=\lVert s\tau R^n\rVert_{L^2(\mu;\R^d)}$. To conclude use that $\tau D_{\nu}^2 \leq \frac{1}{2}$ and $x\leq \frac{1}{2}$ implies $xe^x \leq 1$, which in turn shows $D_{\nu}^2e^{\tau D_{\nu}^2} \leq \tau^{-1}$.
\end{proof}

We are now in position to prove the main result \cref{thm:main-result}.

\begin{proof}[Proof of \cref{thm:main-result}]
    By \cref{lem:SISTA-A} and \cref{lem:SISTA-I}, \cref{thm:abstract} applies. Moreover, as $K_M \subset \mathcal H^\infty$ by \cref{lem:prop_KM}, $\xi^\infty \in \mathcal H^\infty $.
\end{proof}
We end this section with a corollary stating that the iterates are bounded in $\Hcal^\infty$ which motivates the use of the algorithm without normalization in the numerical section.

\begin{corollary}\label{cor:boundedness}
    Under the assumptions and the notations of \cref{thm:main-result} we have
    \begin{equation*}
        \max\{\Vert \xi^\infty\Vert_{\Hcal^\infty},\sup_{n\geq 1} \Vert \xi^n \Vert_{\Hcal^\infty} \}\leq M_\ph + M_\psi + M_h + (3+2D_{\nu})\left(1+4 \dfrac{\sqrt{2E_0/\alpha}}{1-\sqrt q} +2 \Vert \xi^0 \Vert_{\Hcal}\right).
    \end{equation*}
    where $M_\ph,M_\psi$ and $M_h$ are defined in \cref{prop:Linfty}.
\end{corollary}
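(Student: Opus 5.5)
The plan is to write each iterate as its normalized version plus a gauge element, $\xi^n=\widehat\xi^{\,n}+\ell_n$ with $\ell_n\in\Acal$. Then \cref{prop:Linfty} bounds $\widehat\xi^{\,n}$ uniformly in $\Hcal^\infty$, and it remains to bound the gauge parameters $(a,b)$ uniformly. For $n\geq1$, \cref{eq:def_nomalized} gives
\[
\varphi^n=\widehat\varphi^{\,n}+a^{n-1}\cdot x-b^{n-1},\qquad \psi^n=\widehat\psi^{\,n}-a^{n-1}\cdot y+b^{n-1},\qquad h^n=\widehat h^n+a^n.
\]
Here $\widehat h^n=h^n-a^n$, and $\widehat h^{n-1}$, $\widehat\varphi^{\,n}$, $\widehat\psi^{\,n}$ are bounded by $M_h,M_\varphi,M_\psi$. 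So
\[
\Vert\xi^n\Vert_{\Hcal^\infty}\le M_\varphi+M_\psi+M_h+(2+D_\nu)|a^{n-1}|+2|b^{n-1}|+|a^n|,
\]
with a harmless adjustment if the norm of $x\mapsto x$ on $X$ is measured differently; the constant $(3+2D_\nu)$ in the statement should absorb it. The task is therefore to bound $|a^n|$ and $|b^n|$ uniformly in $n$.

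To bound $a^n$ and $b^n$, I will use the $\Hcal$-convergence from \cref{thm:main-result}(ii). Write $r:=\frac{\sqrt{2E_0/\alpha}}{1-\sqrt q}$, so $\Vert\xi^n-\xi^0\Vert_\Hcal\le 2r$ for all $n$, by the triangle inequality through $\xi^\infty$.

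For $a^n$: since $a^n=\int h^n\dd\mu$, Jensen gives $|a^n|\le \Vert h^n\Vert_{L^2(\mu)}\le \Vert\xi^0\Vert_\Hcal+2r$.

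For $b^{n-1}$: integrating $\psi^n$ against $\nu$ and using the centering of $\widehat\psi^{\,n}$ gives $b^{n-1}=\int\psi^n\dd\nu+a^{n-1}\cdot m_\nu$. Assume, after a harmless translation, that $|m_\nu|\le D_\nu$, or more crudely work directly in the $\Hcal^\infty$ norm on $Y$. Then $|b^{n-1}|\le \Vert\psi^n\Vert_{L^2(\nu)}+D_\nu|a^{n-1}|$, and both terms are again bounded by $\Vert\xi^0\Vert_\Hcal+2r$ up to the factor $D_\nu$.

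Collecting the terms gives a bound of the form $(3+2D_\nu)(c+4r+2\Vert\xi^0\Vert_\Hcal)$, and the constants can be arranged to match the statement.

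For $\xi^\infty$, I would pass to the limit. By \cref{lem:segments}, all $\xi^n$ lie in the closed set $K_{M_u}$. Convergence in $\Hcal$ yields a $\mu$- or $\nu$-a.e. convergent subsequence, and $L^\infty$ bounds are preserved under a.e. limits. Hence the uniform bound for $\sup_{n\ge1}\Vert\xi^n\Vert_{\Hcal^\infty}$ passes to $\xi^\infty$.

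The main obstacle is the bookkeeping of the gauge. The normalization at step $n$ uses $a^n$ for $h$ but $a^{n-1}$ and $b^{n-1}$ for $(\varphi^n,\psi^n)$, because the normalization is defined on $\xi^{n-1+\frac23}$. This index offset must be tracked carefully, together with the sizes of $\sup_X|x|$ and $|m_\nu|$, to land exactly on the constant $(3+2D_\nu)$. The "$1+$" inside the bracket presumably covers an additive slack, such as the $M_c$-free $b$ contributions or the choice of origin. If the constant does not match exactly, I would first check whether the intended convention places the origin inside $\operatorname{conv}(Y)$, so that $\sup_{X\cup Y}|z|\le D_\nu$.
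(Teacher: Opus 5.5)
This is essentially the paper's argument: you write $\xi^n$ as its normalized iterate plus an element of $\Acal$ and bound the normalized part by \cref{prop:Linfty}. You then bound the gauge parameters by $L^1\le L^2$ norms together with $\Vert\xi^k\Vert_{\Hcal}\le\Vert\xi^0\Vert_{\Hcal}+2r$ from \cref{thm:main-result}, and pass to $\xi^\infty$ because $\Hcal^\infty$-balls are closed in $\Hcal$. The constant bookkeeping you leave open is settled, as in the paper, by using the definition of $b^{n-1}$ to write $a^{n-1}\cdot x-b^{n-1}=a^{n-1}\cdot(x-m_\nu)-\int\psi^n\dd\nu$ (equivalently, placing the origin at $m_\nu$), since $|x-m_\nu|,|y-m_\nu|\le D_\nu$ on $X\cup Y\subset\operatorname{conv}(Y)$; the coefficient of $|a^{n-1}|$ in your first display is then $2D_\nu$ (not $2+D_\nu$), and you get $(3+2D_\nu)(2r+\Vert\xi^0\Vert_{\Hcal})$, which is below the stated bound.
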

\begin{proof}
    By \cref{thm:main-result} we have a uniform control over the $\Hcal$ norms
    \begin{equation}\label{eq:cor_unif_h_bound}
        \Vert \xi^n \Vert_{\Hcal} \leq \Vert \xi^n - \xi^\infty \Vert_{\Hcal} + \Vert \xi^\infty - \xi^0 \Vert_{\Hcal} + \Vert \xi^0 \Vert_{\Hcal} \leq 2 \dfrac{\sqrt{2E_0/\alpha}}{1-\sqrt q} + \Vert \xi^0 \Vert_{\Hcal}.
    \end{equation}
Given the definition of the normalized iterates in \cref{eq:def_nomalized}, we have
\begin{equation}\label{eq:cor_bounded_iterates}
\left\{
\begin{aligned}
&\Vert h^{n+1}\Vert_{L^\infty(\mu;\R^d)}
 \leq \Vert \widehat h^{n+1}\Vert_{L^\infty(\mu;\R^d)} + \Vert h^{n+1} \Vert_{L^1(\mu;\R^d)}\\
&\Vert\psi^{n+1}\Vert_{L^\infty(\nu)}
\leq\Vert\widehat\psi^{n+1}\Vert_{L^\infty(\nu)} +\Vert h^n \Vert_{L^1(\mu;\R^d)}D_{\nu}+\Vert \psi^{n+1}\Vert_{L^1(\nu)}\\
&\Vert\varphi^{n+1}\Vert_{L^\infty(\mu)}
\leq \Vert\widehat\varphi^{n+1}\Vert_{L^\infty(\mu)}+\Vert h^n \Vert_{L^1(\mu;\R^d)}D_{\nu}+\Vert \psi^{n+1}\Vert_{L^1(\nu)}
\end{aligned}
\right. .
\end{equation}
Since $\mu$ and $\nu$ are probability measures, the $L^1$ norms are bounded by the corresponding $L^2$ norms. Summing the inequalities in \cref{eq:cor_bounded_iterates} and using
\cref{prop:Linfty}, we obtain
\begin{equation*}
\begin{aligned}
\|\xi^{n+1}\|_{\Hcal^\infty}
&\leq M_\ph+M_\psi+M_h
+\|h^{n+1}\|_{L^2(\mu;\R^d)}\\
&\quad+2D_{\nu}\|h^n\|_{L^2(\mu;\R^d)}
+2\|\psi^{n+1}\|_{L^2(\nu)}\\
&\leq M_\ph+M_\psi+M_h
+3\|\xi^{n+1}\|_{\Hcal}
+2D_{\nu}\|\xi^n\|_{\Hcal}.
\end{aligned}
\end{equation*}
  Combining this with \cref{eq:cor_unif_h_bound} gives the claimed inequality. The same bound holds for $\xi^\infty$, since $\xi^n\to\xi^\infty$ in $\Hcal$ and bounded balls of $\Hcal^\infty$ are closed in $\Hcal$.
\end{proof}

\section{Numerical Experiments}
\label{sec:numerics}
In this section, we investigate the numerical behavior of SISTA through three experiments. The first considers the distance cost $c(x,y)=|y-x|$ and illustrates the linear convergence predicted by our main theorem. The second computes a two-dimensional martingale Schrödinger bridge $(c=0)$ and constructs the associated continuous-time Föllmer martingale. The third examines the role of irreducibility and non-degeneracy, illustrating the growth of the martingale multiplier when irreducibility fails and the deterioration of the observed convergence rate near a degenerate limit.

We first introduce the finite-dimensional formulation used in the experiments. 
Let
\[
\mu_N=\sum_{i=1}^{N_\mu}\mu_i\delta_{x_i},
\qquad
\nu_N=\sum_{j=1}^{N_\nu}\nu_j\delta_{y_j},
\]
be discrete approximations of $\mu$ and $\nu$, with positive weights summing to one. Setting $c_{ij}=c(x_i,y_j)$, the corresponding dual functional is
\[
F_N(\varphi,\psi,h)
=\sum_{i=1}^{N_\mu}\sum_{j=1}^{N_\nu}
\mu_i\nu_j
\exp\bigl(\varphi_i+\psi_j+h_i\cdot(y_j-x_i)-c_{ij}\bigr)
-\sum_{i=1}^{N_\mu}\mu_i\varphi_i
-\sum_{j=1}^{N_\nu}\nu_j\psi_j.
\]
The discrete SISTA iteration is given in \cref{alg:discrete-sista}.

\begin{algorithm}[H]
\caption{Discrete SISTA scheme}
\label{alg:discrete-sista}
\begin{algorithmic}[1]

\State \textbf{Input:}
$\mu_N$, $\nu_N$, cost matrix $(c_{ij})$,
step size $\tau>0$, and
$\xi^0=(\varphi^0,\psi^0,h^0)$.

\State \textbf{Repeat until convergence:}

\Statex
\State \textbf{(S)} \textit{Sinkhorn block:}

\Statex
\[
\psi_j^{n+1}
\leftarrow
-\log\left(
\sum_{i=1}^{N_\mu}
\mu_i
\exp\!\left(
\varphi_i^n+h_i^n\cdot(y_j-x_i)-c_{ij}
\right)
\right),
\qquad
j=1,\ldots,N_\nu.
\]

\Statex
\[
\varphi_i^{n+1}
\leftarrow
-\log\left(
\sum_{j=1}^{N_\nu}
\nu_j
\exp\!\left(
\psi_j^{n+1}
+h_i^n\cdot(y_j-x_i)-c_{ij}
\right)
\right),
\qquad
i=1,\ldots,N_\mu.
\]

\Statex
\State \textbf{(G)} \textit{Gradient block:}

\Statex
\[
R_i^n
:=
\sum_{j=1}^{N_\nu}
\nu_j(y_j-x_i)
\exp\!\left(
\varphi_i^{n+1}+\psi_j^{n+1}
+h_i^n\cdot(y_j-x_i)-c_{ij}
\right).
\]

\Statex
\[
h_i^{n+1}
\leftarrow
h_i^n-\tau R_i^n,
\qquad
i=1,\ldots,N_\mu.
\]

\end{algorithmic}
\end{algorithm}

\subsection{SISTA for the Distance Cost}

We consider the one-dimensional distance cost $c_\varepsilon(x,y)=|y-x|/\varepsilon$, with $\mu=\operatorname{Unif}[-1,1]$ and $\nu=\operatorname{Unif}[-2,2]$. This example is motivated by mathematical finance, where the corresponding unregularized martingale optimal transport problem arises in the model-independent pricing of forward-starting straddles~\cite{HoKl15}.  Since \cref{ass:qualification} is satisfied, the linear convergence result of \cref{thm:main-result} applies. For the numerical implementation, we use the midpoint discretizations
\[
\mu_N=\frac1N\sum_{i=1}^N\delta_{-1+(2i-1)/N},
\qquad
\nu_N=\frac1{2N}\sum_{j=1}^{2N}\delta_{-2+(2j-1)/N}.
\]
For even $N$, these discretizations preserve convex order and satisfy $\operatorname{supp}(\mu_N)\subset\operatorname{supp}(\nu_N)$. We first fix $\varepsilon=1$ and run SISTA with step size $\tau=0.02$ and initialization $\xi^0=(0,0,0)$.

\Cref{fig:straddle} displays the martingale residual $||R^n||_{L^2(\mu_N)}$, the multiplier error $||h^n-h_N^{\mathrm{ref}}||_{L^2(\mu_N)}$, and the dual objective gap $F_N(\xi^n)-F_N^{\mathrm{ref}}$. The reference values are obtained by running SISTA for $2000$ iterations. All three quantities exhibit approximately geometric decay, consistently with \cref{thm:main-result}. In particular, the objective gap decreases approximately twice as fast as the martingale residual and multiplier error on the logarithmic scale. Moreover, the convergence curves nearly overlap across discretization levels, suggesting little deterioration of the observed rate under mesh refinement.

\begin{figure}[H]
    \centering
    \includegraphics[width=\textwidth]{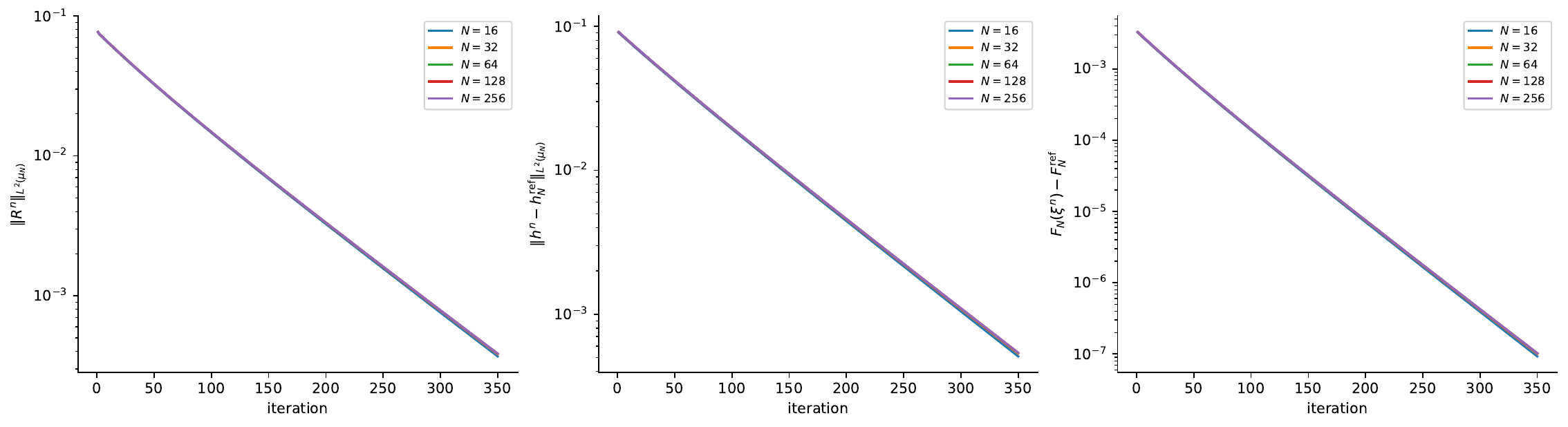}
    \caption{Convergence of SISTA for the distance cost with
$\varepsilon=1$ and different discretization levels $N$.
The plots display the decay of the martingale residual
$\|R^n\|_{L^2(\mu_N)}$, the multiplier error
$\|h^n-h_N^{\mathrm{ref}}\|_{L^2(\mu_N)}$, and the dual objective gap
$F_N(\xi^n)-F_N^{\mathrm{ref}}$. }
    \label{fig:straddle}
\end{figure}

We next investigate the influence of the regularization parameter $\varepsilon$ on the structure of the computed coupling. For this purpose, we fix $N=100$ and display the results for $\varepsilon\in\{1,\,0.1,\,0.01\}$. To facilitate the computation for small values of $\varepsilon$, we employ an $\varepsilon$-scaling procedure, successively decreasing $\varepsilon$ through the values $\{1,\,0.5,\,0.25,\,0.1,\,0.05,\,0.02,\,0.01\}$. More precisely, given $\varepsilon_2<\varepsilon_1$, if $(\varphi_1,\psi_1,h_1)$ are the dual potentials obtained by running SISTA for $\varepsilon=\varepsilon_1$, we initialize SISTA for $\varepsilon=\varepsilon_2$ with $\varepsilon_1 \varepsilon_2^{-1}(\varphi_1,\psi_1,h_1)$.

In \Cref{fig:straddle-heatmaps}, for each value of $\varepsilon$, we display the density with respect to $\mu_N\otimes\nu_N$ of the approximation of the solution of \labelcref{eq:introEMOT} obtained via SISTA, that is,
\[
\frac{\dd\pi_N^{\varepsilon}}{\dd(\mu_N\otimes\nu_N)}(x_i,y_j)
=
\exp\left(
\varphi_i^{\varepsilon}+\psi_j^{\varepsilon}
+h_i^{\varepsilon}(y_j-x_i)
-\frac{|y_j-x_i|}{\varepsilon}
\right).
\]
As expected, decreasing $\varepsilon$ leads to a more concentrated coupling, reflecting the increasing importance of the distance cost relative to the entropic regularization.

\begin{figure}[H]
    \centering
    \includegraphics[width=\textwidth]{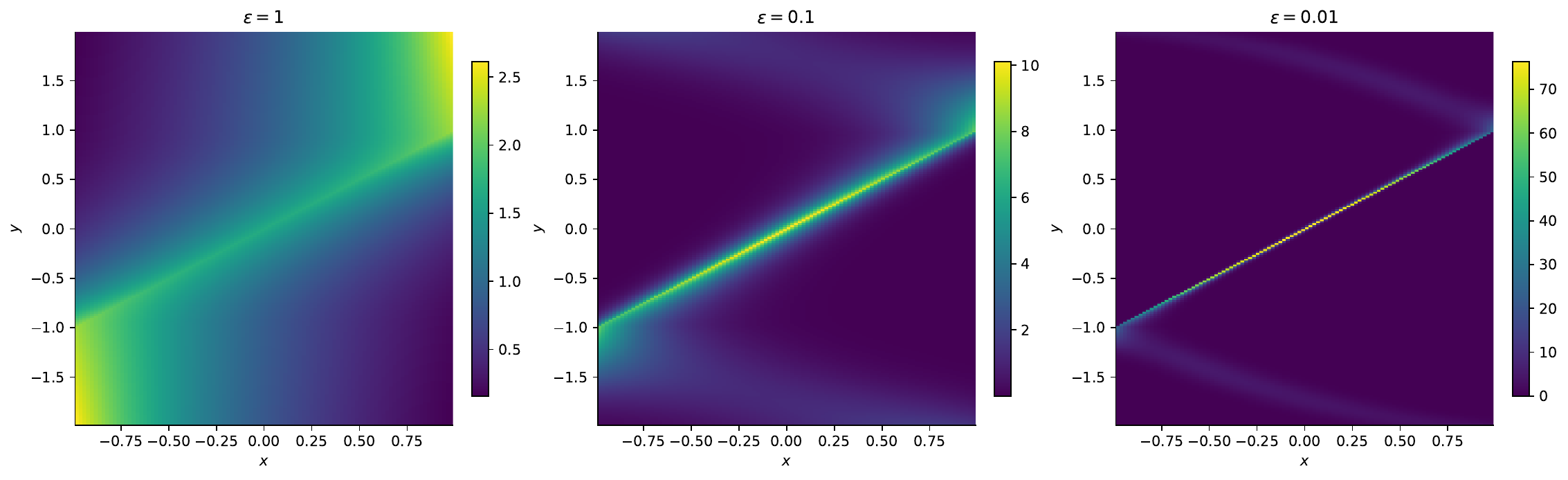}
    \caption{Density of the entropic martingale coupling with respect to
    $\mu_N\otimes\nu_N$ for the distance cost
    $c_\varepsilon(x,y)=|y-x|/\varepsilon$, with $N=100$.}
    \label{fig:straddle-heatmaps}
\end{figure}

\subsection{From the Martingale Schrödinger Bridge to the Föllmer Martingale}

We compute the martingale Schrödinger bridge ($c=0$) for the two-dimensional marginals
\[
\mu=\operatorname{Unif}(\mathbb S^1),
\qquad
\nu=
\frac12\operatorname{Unif}\!\left(\frac12\mathbb S^1\right)
+
\frac12\operatorname{Unif}\!\left(\frac85\mathbb S^1\right),
\]
where $\mathbb S^1:=\{x\in\mathbb R^2:\ |x|=1\}$.
We discretize $\mu$ using $64$ equally spaced points on the unit circle and $\nu$ using $64$ points on each of the two circles. We run SISTA with $\tau=0.04$ for $20000$ iterations and use the resulting coupling to approximate the associated continuous-time Föllmer martingale, following~\cite{BaBeBiLe26}. After the final SISTA iteration, we perform an additional $\varphi$-update to enforce the first marginal constraint. The resulting Gibbs plan defines the conditional probabilities
\[
p_{ij}
=\nu_j\exp\bigl(\varphi_i+\psi_j+h_i\cdot(y_j-x_i)\bigr),
\qquad
\sum_jp_{ij}=1.
\]

Conditionally on $\{X=x_i\}$, we sample $Y$ according to $(p_{ij})_j$ and define
\[
\widehat X_t=(1-t)X+tY+\beta_t,
\qquad 0\leq t<1,
\]
where $\beta$ is a standard two-dimensional Brownian bridge independent of $(X,Y)$. The associated filtering martingale is
\[
M_t
=\mathbb E\left[Y\mid X,(\widehat X_r)_{0\leq r\leq t}\right]
=\sum_j y_jw_{ij}(t,\widehat X_t),
\]
on $X=x_i$, where
\[
w_{ij}(t,z)
=\mathbb P(Y=y_j\mid X=x_i,\widehat X_t=z).
\]
Using the Brownian bridge representation $\beta_t=(1-t)B_{t/(1-t)}$, we obtain
\[
\frac{\widehat X_t-X}{1-t}
=\frac{t}{1-t}(Y-X)+B_{t/(1-t)}.
\]
Thus, under the time change $s=t/(1-t)$, the observation process is equivalent to the Brownian filtering model $s(Y-X)+B_s$ in~\cite[Theorem~5.1.]{BaBeBiLe26}.

\Cref{fig:2d_follmer} illustrates sample trajectories of the resulting filtering martingale, providing a numerical approximation of the Föllmer martingale associated with the continuous martingale Schrödinger bridge. These trajectories may be compared with those of the Bass martingale in~\cite[Figure~1]{BaBeSchaTschi26}.

\begin{remark}
Since $\nu$ is singular with respect to the Gaussian reference measure $\gamma$, we have $H(\nu\mid\gamma)=+\infty$. Thus, the finite-energy characterization of~\cite[Theorem~4.12.]{BaBeBiLe26} does not apply. A rigorous justification of our numerical approximation would require suitable stability results, that we leave for further work.
\end{remark}

\subsection{Role of Irreducibility and Non-degeneracy}

We conclude the numerical section by examining the role of the irreducibility and non-degeneracy assumptions in \cref{thm:main-result}. The first example violates irreducibility while preserving non-degeneracy, whereas the second considers a sequence of irreducible, non-degenerate problems approaching a degenerate continuous limit.

\paragraph{Failure of irreducibility.} Consider the one-dimensional marginals
\[
\mu=\frac12(\delta_{-2}+\delta_2),
\qquad
\nu=\frac14(\delta_{-3}+\delta_{-1}+\delta_1+\delta_3).
\]
They are in convex order and satisfy the non-degeneracy condition, but irreducibility fails. Indeed, the unique martingale coupling is
\[
m^\star=
\begin{pmatrix}
\frac14&\frac14&0&0\\
0&0&\frac14&\frac14
\end{pmatrix},
\]
whose conditional laws are not equivalent to $\nu$. Since $m^\star$ has zero entries, it cannot be represented by finite Gibbs potentials. Nevertheless, Gibbs couplings can approximate $m^\star$: convergence of the dual objective to its infimum implies convergence of the associated Gibbs coupling to $m^\star$.
The optimal entropy and dual infimum are explicit:
\[
H(m^\star\mid\mu\otimes\nu)=\log 2,
\qquad
F^\star=1-\log 2.
\]

We run SISTA with step size $\tau=0.01$ for $20000$ iterations, starting from $\xi^0=(0,0,0)$.
\Cref{fig:failure_of_irreducibility} shows that the martingale residual and dual objective gap decrease throughout the computation, while $\|h^n\|_{L^2(\mu)}$ continues to grow. This behavior reflects the absence of finite Gibbs potentials representing $m^\star$: approximating its zero entries requires increasingly large differences between the martingale multipliers. Thus, the experiment illustrates the role of irreducibility in ensuring uniform multiplier bounds.

\begin{figure}[H]
    \centering

    \includegraphics[width=\textwidth]{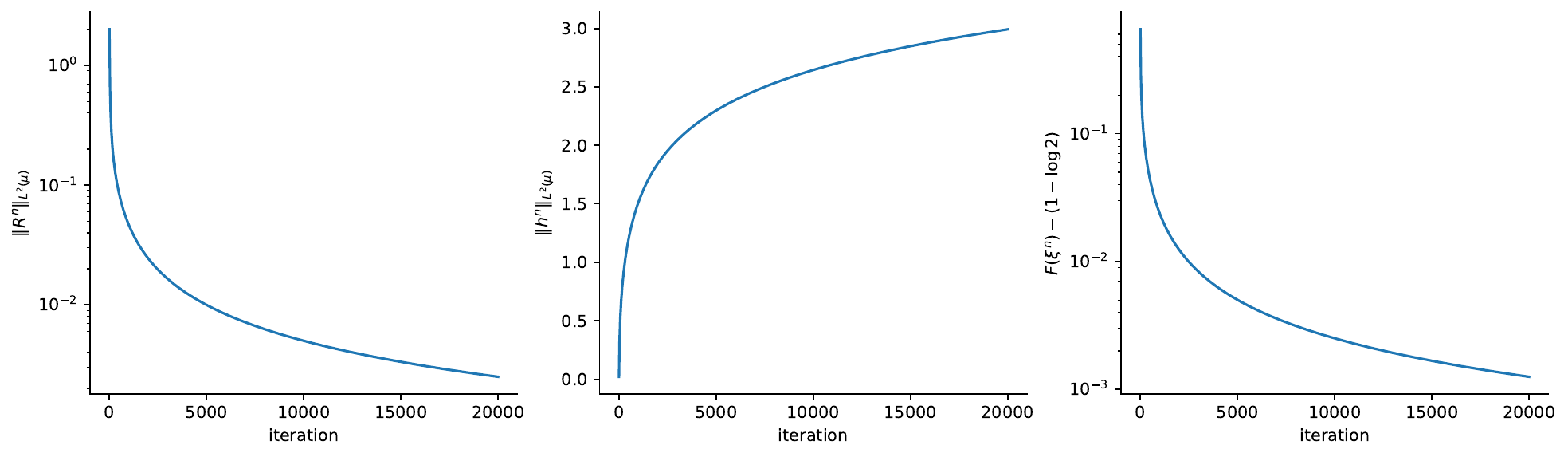}

   \caption{Behavior of SISTA when irreducibility fails. The plots display the martingale residual $\|R^n\|_{L^2(\mu)}$, the multiplier norm $\|h^n\|_{L^2(\mu)}$, and the dual objective gap $F(\xi^n)-F^\star$.
    }
    \label{fig:failure_of_irreducibility}

\end{figure}

\paragraph{Failure of non-degeneracy} We next consider a sequence of discrete problems satisfying the assumptions of \cref{thm:main-result} and approaching a degenerate continuous limit. Let
\[
\dd \mu(x)=(1-|x|)\mathbf 1_{[-1,1]}(x)\dd x,
\qquad
\nu=\operatorname{Unif}[-1,1].
\]
The pair is irreducible but fails the non-degeneracy condition, since
\[
\operatorname{supp}(\mu)=[-1,1]
\not\subset
\operatorname{int}\operatorname{conv}(\operatorname{supp}(\nu))
=(-1,1).
\]
A discrete approximation placing positive mass at an extreme point of the convex hull of $\operatorname{supp}(\nu_N)$ cannot preserve irreducibility, since the martingale constraint forces the corresponding conditional law to be concentrated at that point.

We approximate the continuous marginals using equal-weight midpoint-quantile discretizations. Let $Q_\mu$ and $Q_\nu$ denote their quantile functions and set
\[
u_i=\frac{i-\frac12}{N},
\qquad
x_i=Q_\mu(u_i),\qquad y_i=Q_\nu(u_i),
\quad i=1,\ldots,N,
\]
where
\[
Q_\mu(u)=
\begin{cases}
-1+\sqrt{2u},&0\leq u\leq\frac12,\\
1-\sqrt{2(1-u)},&\frac12<u\leq1,
\end{cases}
\qquad Q_\nu(u)=2u-1.
\]
We define
\[
\mu_N=\frac1N\sum_{i=1}^N\delta_{x_i},
\qquad
\nu_N=\frac1N\sum_{i=1}^N\delta_{y_i}.
\]
These discretizations preserve convex order and converge weakly to $\mu$ and $\nu$, respectively. Moreover,
\[
\max\operatorname{supp}(\mu_N)=1-\frac1{\sqrt N},
\qquad
\max\operatorname{supp}(\nu_N)=1-\frac1N,
\]
so that the non-degeneracy margin satisfies
\[
\delta_N
=\operatorname{dist}\!\left(
\operatorname{supp}(\mu_N),
\partial\operatorname{conv}(\operatorname{supp}(\nu_N))
\right)
=\frac1{\sqrt N}-\frac1N\longrightarrow0.
\]

For each discretization, we run SISTA with step size $\tau=0.10$ for $3000$ iterations, starting from $\xi^0=(0,0,0)$. The reference multiplier $h_N^{\mathrm{ref}}$ and objective value $F_N^{\mathrm{ref}}$ are obtained by continuing SISTA for $12000$ iterations.

\Cref{fig:failure_of_nondegeneracy} displays the martingale residual $\|R^n\|_{L^2(\mu_N)}$, the multiplier error $\|h^n-h_N^{\mathrm{ref}}\|_{L^2(\mu_N)}$, and the dual objective gap $F_N(\xi^n)-F_N^{\mathrm{ref}}$ for $N=16,32,64,128,256$. All three quantities decrease, but their decay becomes progressively slower as $N$ increases. As in \Cref{fig:straddle}, the objective gap generally decreases faster than the residual and multiplier error.

In contrast to \Cref{fig:straddle}, where the convergence curves nearly overlap across discretization levels, \Cref{fig:failure_of_nondegeneracy} shows a marked deterioration of the observed convergence rate as $\delta_N\to0$. Although each finite discretization remains compatible with geometric convergence, the plots do not suggest a rate that is uniform under mesh refinement. This is consistent with the role of non-degeneracy in the estimates of the preceding sections: as the discrete problems approach the degenerate continuous limit, the constants governing boundedness and convergence may deteriorate.
\begin{figure}[H]
    \centering

    \includegraphics[width=\textwidth]{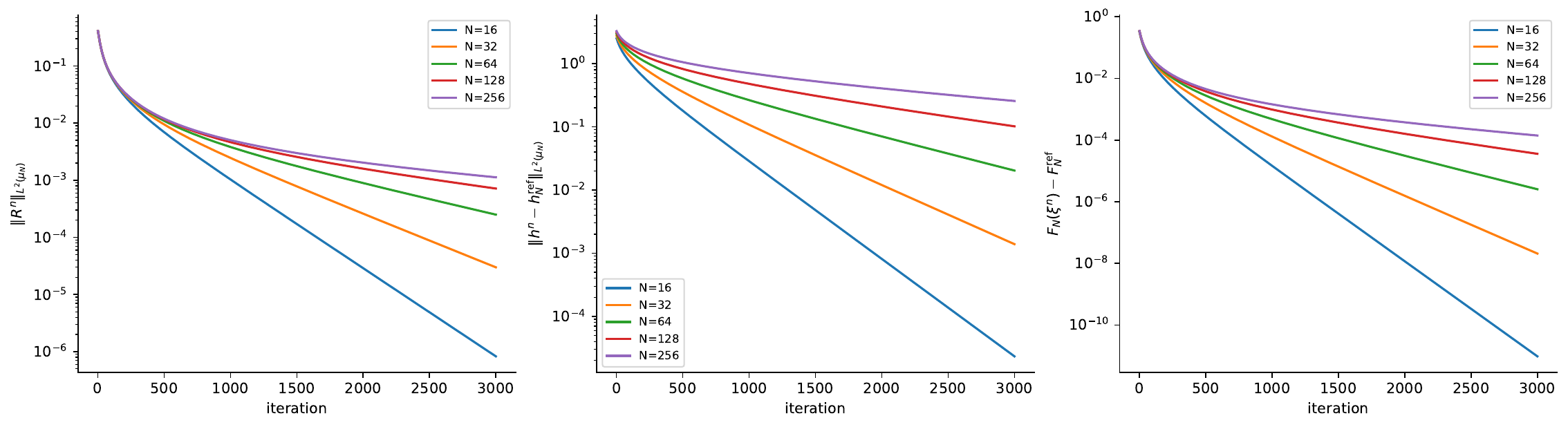}

\caption{Convergence of SISTA as the discrete problems approach a degenerate continuous limit, for $N=16,32,64,128,256$, with non-degeneracy margin $\delta_N=N^{-1/2}-N^{-1}$. The plots display the martingale residual $\|R^n\|_{L^2(\mu_N)}$, the multiplier error $\|h^n-h_N^{\mathrm{ref}}\|_{L^2(\mu_N)}$, and the dual objective gap $F_N(\xi^n)-F_N^{\mathrm{ref}}$.}
    \label{fig:failure_of_nondegeneracy}

\end{figure}

\paragraph{Acknowledgments}
The authors acknowledge the use of ChatGPT (OpenAI) for assistance with language editing, assessment of the references and code development. All mathematical arguments, proofs, and conclusions were developed and independently verified by the authors.

\appendix

\section{Equivalence between Assumptions}\label{app:ass}

In \cite[Remark 2.6]{KaReWa26} the authors show that irreducibility implies strict convex order. Partly reproducing  the proof of \cite[Theorem 2.9.]{CaMaSy26} we prove that, under non-degeneracy, strict convex order implies $W_\infty$-stability, which in turn implies irreducibility.
\begin{proposition}
    Assume $\mu,\nu \in \mathcal{P}(\R^d)$ are in convex order, non-degenerate, and, for any non-affine convex function $u:\textnormal{conv}(Y) \to \R$, it holds $\int u \dd \mu < \int u \dd \nu$. Then the pair $(\mu,\nu)$ is $W_\infty$-stable.
\end{proposition}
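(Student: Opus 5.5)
We prove the $W_\infty$-stability property in the form of the Remark following \cref{ass:qualification}. That is, we find $\eta>0$ such that every $\bar\mu$ with the same barycenter as $\mu$ and $W_\infty(\bar\mu,\mu)\le\eta$ satisfies $\bar\mu\le_c\nu$. The plan is to argue by contradiction with a compactness argument on convex test functions.

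Suppose the claim fails. Then there are $\bar\mu_k$ with $\int x\dd\bar\mu_k=\int x\dd\mu$ and $W_\infty(\bar\mu_k,\mu)\le 1/k$, together with convex functions $u_k$, finite on $\mathrm{conv}(Y)$, such that $\int u_k\dd\bar\mu_k>\int u_k\dd\nu$. We first use non-degeneracy: $X$ is compact and $X\subseteq\mathrm{int}(\mathrm{conv}(Y))$, so there is $r>0$ with $X_r:=X+\bar B_r\subseteq\mathrm{int}(\mathrm{conv}(Y))$, and for $k$ large $\supp(\bar\mu_k)\subseteq X_{r/2}$. We then normalize $u_k$. Subtracting an affine function changes neither side of the inequality, since $\bar\mu_k$, $\mu$ and $\nu$ share the barycenter $\int x\dd\mu=\int y\dd\nu$. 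So we subtract a supporting affine function of $u_k$ at a fixed interior point $x_0$; after this, $u_k\ge0$ and $u_k(x_0)=0$. Dividing by a positive constant, we further impose $\int u_k\dd\nu=1$; this is possible because $\int u_k\dd\nu>\int u_k\dd\bar\mu_k\ge0$.

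The key compactness step comes next. With $u_k\ge0$, $u_k(x_0)=0$ and $\int u_k\dd\nu=1$, the functions $u_k$ should be locally uniformly bounded on a neighbourhood $V$ of $X_{r}$ inside $\mathrm{int}(\mathrm{conv}(Y))$. A large value of a nonnegative convex function at a point of $V$ forces large values on a cone reaching $\partial\mathrm{conv}(Y)$, and this region carries $\nu$-mass bounded below. The lower bound on that mass is exactly the directional non-degeneracy \cref{eq:directional_non_degeneracy} (Lemma 2.11 of \cite{CaMaSy26}), applied around $x_0$. Given local boundedness, convexity gives equi-Lipschitz bounds on $X_{r/2}$. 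Arzelà--Ascoli then yields uniform convergence on $X_{r/2}$ along a subsequence to a convex $u\ge0$ with $u(x_0)=0$. In parallel, we extract a pointwise limit on $\mathrm{int}(\mathrm{conv}(Y))$, which is possible since convex functions that are locally bounded converge locally uniformly on the interior.

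We pass to the limit in the two sides separately. On the $\bar\mu_k$ side, $W_\infty(\bar\mu_k,\mu)\to0$ and the equi-Lipschitz bound on $X_{r/2}$ give $\int u_k\dd\bar\mu_k-\int u_k\dd\mu\to0$. On the $\nu$ side, Fatou's lemma gives $\int u\dd\nu\le\liminf\int u_k\dd\nu=1$, after extending $u$ to $\mathrm{conv}(Y)$ by lower semicontinuous closure. This inequality goes the wrong way for a contradiction, and the weak-limit issue it reflects is the main obstacle: mass of $u_k$ may escape to $\partial\mathrm{conv}(Y)$, so the limit can lose part of $\int u_k\dd\nu$.

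To deal with this, we do not pass to the limit on the $\nu$ side directly. Instead we compare $\int u_k\dd\mu$ with $\int u_k\dd\nu$ through the irreducible structure. Using $\int u_k\dd\mu\ge\int u_k\dd\bar\mu_k-o(1)>1-o(1)$ and $\mu\le_c\nu$, the limit $u$ must satisfy $\int u\dd\mu\ge1$. Strict convex order, applied to $u$, then gives: either $u$ is affine, or $\int u\dd\mu<\int u\dd\nu$. In the second case we obtain $1\le\int u\dd\mu<\int u\dd\nu\le1$, a contradiction. In the affine case, the normalization $u\ge0$, $u(x_0)=0$ with $x_0$ interior forces $u\equiv0$ near $X$, contradicting $\int u\dd\mu\ge1$. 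So the remaining difficulty is to keep the $\mu$-integral bound in the limit, which holds because $\supp\mu\subseteq X_{r/2}$, where convergence is uniform. We expect the delicate estimates to lie in the uniform local boundedness step and in checking that strict convex order applies to the closure of $u$ on $\mathrm{conv}(Y)$.
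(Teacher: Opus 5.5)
Your proof is correct in substance and takes a genuinely different route from the paper's, but four steps need repair (listed below).

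\textbf{Comparison with the paper.} The paper argues directly. It reduces to test functions that are $1$-Lipschitz with $u(0)=0=\min u$, and it quantifies non-affinity by $\mathrm{diam}(\partial u(X_\eta))\ge\tfrac12$. Arzel\`a--Ascoli in $C(\mathrm{conv}(Y))$ then gives a positive margin $\eta_2=\min_S\int u\dd(\nu-\mu)$. An arbitrary test function is pushed into $S$ by dividing by $2\delta$ and inf-convolving with $\Vert\cdot\Vert$, and the radius is $\min\{\eta_1,\eta_2\}$.

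You instead normalize by integrals ($u_k\ge0$, $u_k(x_0)=0$, unit mass) and argue by contradiction. This makes non-affinity of the limit automatic: a nonnegative convex function vanishing at an interior point with $\int u\dd\mu\ge1$ cannot be affine. So you need neither the subdifferential-diameter condition nor the case distinction and rescaling by $\delta$. The price is that your compactness is only local in $\mathrm{int}(\mathrm{conv}(Y))$. You need directional non-degeneracy for local bounds and a semicontinuity argument at $\partial\mathrm{conv}(Y)$. The paper's uniform Lipschitz normalization avoids both, since its convergence is uniform on all of $\mathrm{conv}(Y)$.

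\textbf{What to fix.}

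First, your justification of the normalization is reversed. The violation reads $\int u_k\dd\bar\mu_k>\int u_k\dd\nu\ge0$, so normalize $\int u_k\dd\bar\mu_k=1$ instead. Then $\int u_k\dd\nu<1$ and the rest is unchanged.

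Second, the local bound needs more than \cref{eq:directional_non_degeneracy} applied at $x_0$ or on $X$. For each compact $V\subset\mathrm{int}(\mathrm{conv}(Y))$ you need
$\rho_V:=\inf_{z\in V,\,|e|=1}\nu(\{y:e\cdot(y-z)>0\})>0$.
Each value is positive because $z$ is interior, and the map is lower semicontinuous on the compact set $V\times\mathbb S^{d-1}$, so the infimum is positive. Now if $u_k(z)=M>0$ and $p\in\partial u_k(z)$, then $p\neq0$ because $u_k(x_0)=0$. Moreover $u_k\ge M$ on $\{p\cdot(y-z)\ge0\}$, hence $M\le1/\rho_V$.

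Third, Fatou does not go the wrong way, and neither an ``irreducible structure'' nor $\mu\le_c\nu$ is needed. For $y\in\partial\mathrm{conv}(Y)$, convexity and $u_k(x_0)=0$ give $u_k(x_0+t(y-x_0))\le t\,u_k(y)$ for $t<1$. Hence $\bar u(y)\le\liminf_k u_k(y)$ and $\int\bar u\dd\nu\le1$. This is exactly the inequality your final chain uses.

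Fourth, $\bar u$ may equal $+\infty$ at some points of $\partial\mathrm{conv}(Y)$; only $\nu$-a.e.\ finiteness follows. So strict convex order does not apply to $\bar u$ as stated. Replace it by
$w(x):=\inf_{z\in\mathrm{int}(\mathrm{conv}(Y))}\{u(z)+L\Vert x-z\Vert\}$, with $L\ge\sup\{\Vert p\Vert:p\in\partial u(X)\}$.
Then $w$ is convex and Lipschitz, $0\le w\le\bar u$ on $\mathrm{conv}(Y)$, $w=u$ on $X$, and $w(x_0)=0$. Hence $w$ is not affine, and
$1\le\int w\dd\mu<\int w\dd\nu\le\int\bar u\dd\nu\le1$
gives the contradiction. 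This is the same infimal-convolution device the paper uses.
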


\begin{proof}
    Assume without loss of generality that $\int x \dd \mu = 0$. 
    Since $X \subset \ri ( \mathrm{conv}(Y) ) = \mathrm{int} (\mathrm{conv}(Y))$ and $X$ is compact, there is $\eta_1 > 0$ such that $X_{\eta_1} :=X + B(0,\eta_1) \subset \mathrm{int} (\mathrm{conv}(Y))$. Now set 
    \begin{equation*}
        S = \left\{u \in C(\mathrm{conv}(Y)): u \text{ convex}, u(0) = 0 =  \min_{x\in \mathrm{conv}(Y)} u(x), \mathrm{diam}(\partial u(X_\eta) ) \geq \frac 12,u \text{ 1-Lipschitz}\right\}
    \end{equation*}
    $S$ is relatively compact in $C(\mathrm{conv}(Y))$ by Arzelà--Ascoli and closed, hence compact.
    Thus
    \begin{equation*}
        \eta_2 :=\inf_{u \in S} \int u \dd (\nu-\mu) = \int u_0 \dd (\nu-\mu) >0
    \end{equation*}
    where $u_0 \in S$ is not affine and the last inequality is given by strict convex ordering. Set $\eta=\min\{\eta_1,\eta_2\}$. Let $\bar\mu\in \mathcal{P}(\R^d)$ be such that $\int x \dd \bar \mu = \int x \dd \mu$ and $W_\infty(\bar \mu,\mu) \leq \eta$. To check that $\bar \mu \leq_c \nu$ it is enough to do so for $1$-Lipschitz convex functions which satisfy $u(0) = 0 = \min_{x\in \mathrm{conv}(Y)} u(x)$. Let $u \in C(\mathrm{conv}(Y))$ be such a $1$-Lipschitz convex function. If $\delta := \mathrm{diam}(\partial u(X_\eta)) = 0$ then $u$ is affine on $X_\eta$; since $\mathrm{supp}(\bar \mu) \subset X_\eta$ and $\int x \dd \bar \mu = \int x \dd \mu$, $\int u\dd\bar\mu= \int u \dd\mu \leq \int u \dd \nu$. Else, set $\bar u = \frac{u}{2\delta} \square \Vert \cdot \Vert\in S$. We have
    \begin{equation*}
        \int \frac{u}{2\delta} \dd \bar\mu \leq \int \bar  u \dd \bar \mu \leq \int \bar u \dd \mu + \eta \leq \int \bar u \dd \nu \leq \int \frac{u}{2\delta} \dd \nu
    \end{equation*}
    where the first inequality holds from $\frac{u}{2\delta} = \bar u$, since $\partial u (X_\eta) \subset \overline B(0,1)$, on $X_\eta$ which contains the support of $\bar \mu$, the second by $W_\infty(\bar \mu,\mu) \leq \eta$ and $1$-Lipschitzness of $\bar u$, the third because $\bar u \in S$ and the last because the infimal convolution is smaller than the function itself. This concludes the proof of the $W_\infty$ stability claim.
\end{proof}

\begin{remark}
    Note that the non-degeneracy assumption is really needed. For instance, if $\mu = \frac12 \delta_0 + \frac12 \delta_1$ and $\nu = \frac34 \delta_0 + \frac14 \delta_2$, then $ m:= \frac12 \delta_{(0,0)}+ \frac14 \delta_{(1,0)}+\frac14 \delta_{(1,2)}$ is the unique martingale transport plan. As the conditional law $m_0 = \delta_0 \not\sim \nu$, $(\mu,\nu)$ is not irreducible. But for any convex function $u: [0,2] \to \R$,  $\int u \dd\mu = \int u \dd\nu $ yields $u(1)=\frac{u(0)+u(2)}{2}$, which in turn forces $u$ to be affine on $[0,2]$. This proves $(\mu,\nu)$ is in strict convex order in the sense of \cite{KaReWa26}, which in turn shows that strict convex order does not imply irreducibility in general.
\end{remark}

\bibliographystyle{abbrv}
\bibliography{references}

@misc{CaMaSy26,
  author       = {Carlier, Guillaume and Malamut, Hugo and Sylvestre, Maxime},
  title        = {Weak Optimal Transport with Moment Constraints:
                  Constraint Qualification, Dual Attainment and
                  Entropic Regularization},
  year         = {2026},
  eprint       = {2511.16211},
  archivePrefix = {arXiv},
  primaryClass = {math.OC}
}

@misc{GaNaTo25,
      title={Metric extrapolation in the Wasserstein space}, 
      author={Thomas O. Gallouët and Andrea Natale and Gabriele Todeschi},
      year={2025},
      eprint={2407.10516},
      archivePrefix={arXiv},
      primaryClass={math.OC},
      url={https://arxiv.org/abs/2407.10516} 
}

@misc{CheCoReWa26,
      title={Convergence of Sinkhorn's Algorithm for Entropic Martingale Optimal Transport Problem}, 
      author={Fan Chen and Giovanni Conforti and Zhenjie Ren and Xiaozhen Wang},
      year={2026},
      eprint={2407.14186},
      archivePrefix={arXiv},
      primaryClass={math.PR},
      url={https://arxiv.org/abs/2407.14186}, 
}

@misc{BaBeBiLe26,
      title={Bridging classical and martingale Schr\"odinger bridges}, 
      author={Julio Backhoff and Mathias Beiglböck and Giorgia Bifronte and Armand Ley},
      year={2026},
      eprint={2604.01299},
      archivePrefix={arXiv},
      primaryClass={math.PR},
      url={https://arxiv.org/abs/2604.01299}, 
}

@misc{KaWa26,
      title={Sinkhorn algorithms for entropic vector quantile regression}, 
      author={Kengo Kato and Boyu Wang},
      year={2026},
      eprint={2603.21554},
      archivePrefix={arXiv},
      primaryClass={math.OC},
      url={https://arxiv.org/abs/2603.21554}, 
}

@article{CaDuGaSu23,
  title   = {{SISTA}: Learning Optimal Transport Costs under Sparsity Constraints},
  author  = {Carlier, Guillaume and Dupuy, Arnaud and Galichon, Alfred and Sun, Yifei},
  journal = {Communications on Pure and Applied Mathematics},
  volume  = {76},
  number  = {9},
  pages   = {1659--1677},
  year    = {2023},
  doi     = {10.1002/cpa.22047}
}

@article{BaBeSchaTschi26,
  author  = {Backhoff-Veraguas, Julio and Beiglb{\"o}ck, Mathias and Schachermayer, Walter and Tschiderer, Bertram},
  title   = {Existence of Bass Martingales and the Martingale Benamou--Brenier Problem in {$\mathbb{R}^d$}},
  journal = {The Annals of Probability},
  year    = {2026},
  volume  = {54},
  number  = {5},
  pages   = {2216--2263}
}

@article {Stra65,
    AUTHOR = {Strassen, V.},
     TITLE = {The existence of probability measures with given marginals},
   JOURNAL = {Ann. Math. Statist.},
  FJOURNAL = {Annals of Mathematical Statistics},
    VOLUME = {36},
      YEAR = {1965},
     PAGES = {423--439},
      ISSN = {0003-4851},
   MRCLASS = {60.05 (60.20)},
  MRNUMBER = {177430},
MRREVIEWER = {J.\ Wolfowitz},
       DOI = {10.1214/aoms/1177700153},
       URL = {https://doi.org/10.1214/aoms/1177700153},
}

@article{CoHe21,
	author = {Conze, Antoine and Henry-Labord{\`e}re, Pierre},
	doi = {10.2139/ssrn.3853085},
	issn = {1556-5068},
	journal = {SSRN Electronic Journal},
	language = {en},
	shorttitle = {Bass {Construction} with {Multi}-{Marginals}},
	title = {Bass {Construction} with {Multi}-{Marginals}: {Lightspeed} {Computation} in a {New} {Local} {Volatility} {Model}},
	url = {https://www.ssrn.com/abstract=3853085},
	urldate = {2023-07-25},
	year = {2021}}

@article{AcMaPa25,
  author = {Acciaio, Beatrice and Marini, Antonio and Pammer, Gudmund},
  journal = {SIAM Journal on Financial Mathematics},
  title = {Calibration of the {Bass} Local Volatility Model},
  volume = {16},
  number = {3},
  pages = {803--833},
  year = {2025},
  doi = {10.1137/23M1622660},
}

@article{BeJu16,
  author = {Beiglb{\"o}ck, M. and Juillet, N.},
  doi = {10.1214/14-AOP966},
  fjournal = {The Annals of Probability},
  issn = {0091-1798},
  journal = {Ann. Probab.},
  mrclass = {60G46 (49K45 49N05 49Q20)},
  mrnumber = {3456332},
  number = {1},
  pages = {42--106},
  title = {On a problem of optimal transport under marginal martingale constraints},
  url = {http://dx.doi.org/10.1214/14-AOP966},
  volume = {44},
  year = {2016},
}

@article{BaBeHuKa20,
  author = {Backhoff-Veraguas, Julio and Beiglb\"{o}ck, Mathias and Huesmann, Martin and K{\"a}llblad, Sigrid},
  journal = {Annals of Probability},
  title = {Martingale {Benamou--Brenier}: A probabilistic perspective},
  volume = {48},
  number = {5},
  pages = {2258--2289},
  year = {2020},
  doi = {10.1214/20-AOP1422},
}

@article{HuTr17,
  author = {Huesmann, Martin and Trevisan, Dario},
  doi = {10.3150/18-BEJ1069},
  fjournal = {Bernoulli. Official Journal of the Bernoulli Society for Mathematical Statistics and Probability},
  issn = {1350-7265},
  journal = {Bernoulli},
  mrclass = {60G46 (49Q20 60B10)},
  mrnumber = {4003563},
  number = {4A},
  pages = {2729--2757},
  title = {A {B}enamou-{B}renier formulation of martingale optimal transport},
  url = {https://doi.org/10.3150/18-BEJ1069},
  volume = {25},
  year = {2019},
}

@misc{NuWi24,
  author = {Nutz, Marcel and Wiesel, Johannes},
  title = {On the Martingale {Schr{\"o}dinger} Bridge between Two Distributions},
  year = {2024},
  howpublished = {Preprint, {arXiv}:2401.05209},
  url = {https://arxiv.org/abs/2401.05209},
  arxiv = {arXiv:2401.05209},
}

@misc{KaReWa26,
      title={Exponential convergence of Sinkhorn algorithm for entropy martingale optimal transport}, 
      author={Anna Kazeykina and Zhenjie Ren and Hecheng Wang},
      year={2026},
      eprint={2609.18850},
      archivePrefix={arXiv},
      primaryClass={math.PR},
      url={https://arxiv.org/abs/2609.18850}, 
}

@inproceedings{Cu13,
  author = {Cuturi, Marco},
  booktitle = {Proceedings of the 26th International Conference on Neural Information Processing Systems},
  pages = {2292--2300},
  title = {{Sinkhorn Distances: Lightspeed Computation of Optimal Transport}},
  volume = {2},
  year = {2013},
}

@article{GoRoSaShTe18,
  author  = {Gozlan, Nathael and Roberto, Cyril and Samson, Paul-Marie and Shu, Yan and Tetali, Prasad},
  title   = {Characterization of a Class of Weak Transport-Entropy Inequalities on the Line},
  journal = {Annales de l'Institut Henri Poincar{\'e}, Probabilit{\'e}s et Statistiques},
  volume  = {54},
  number  = {3},
  pages   = {1667--1693},
  year    = {2018}
}

@article{GoRoSaTe17,
  author  = {Gozlan, Nathael and Roberto, Cyril and Samson, Paul-Marie and Tetali, Prasad},
  title   = {Kantorovich Duality for General Transport Costs and Applications},
  journal = {Journal of Functional Analysis},
  volume  = {273},
  number  = {11},
  pages   = {3327--3405},
  year    = {2017}
}

@article{AlBoCh19,
  author  = {Alibert, Jean-Jacques and Bouchitt{\'e}, Guy and Champion, Thierry},
  title   = {A New Class of Costs for Optimal Transport Planning},
  journal = {European Journal of Applied Mathematics},
  volume  = {30},
  number  = {6},
  pages   = {1229--1263},
  year    = {2019}
}

@article{BaBePa19,
  author  = {Backhoff-Veraguas, Julio and Beiglb{\"o}ck, Mathias and Pammer, Gudmund},
  title   = {Existence, Duality, and Cyclical Monotonicity for Weak Transport Costs},
  journal = {Calculus of Variations and Partial Differential Equations},
  volume  = {58},
  number  = {6},
  pages   = {1--28},
  year    = {2019}
}

@article{BaPa22,
  author  = {Backhoff-Veraguas, Julio and Pammer, Gudmund},
  title   = {Applications of Weak Transport Theory},
  journal = {Bernoulli},
  volume  = {28},
  number  = {1},
  pages   = {370--394},
  year    = {2022}
}

@misc{BePaRiSc25,
  author        = {Beiglb{\"o}ck, Mathias and Pammer, Gudmund and Riess, Leonard and Schrott, Stefan},
  title         = {The Fundamental Theorem of Weak Optimal Transport},
  year          = {2025},
  eprint        = {2501.16316},
  archivePrefix = {arXiv}
}

@article{BeLu13,
  title = {On the Convergence of Block Coordinate Descent Type Methods},
  volume = {23},
  ISSN = {1095-7189},
  url = {http://dx.doi.org/10.1137/120887679},
  DOI = {10.1137/120887679},
  number = {4},
  journal = {SIAM Journal on Optimization},
  publisher = {Society for Industrial & Applied Mathematics (SIAM)},
  author = {Beck,  Amir and Tetruashvili,  Luba},
  year = {2013},
  month = Jan,
  pages = {2037–2060}
}

@article{ChaDePe08,
  title = {The $\infty$-Wasserstein Distance: Local Solutions and Existence of Optimal Transport Maps},
  volume = {40},
  ISSN = {1095-7154},
  url = {http://dx.doi.org/10.1137/07069938X},
  DOI = {10.1137/07069938x},
  number = {1},
  journal = {SIAM Journal on Mathematical Analysis},
  publisher = {Society for Industrial & Applied Mathematics (SIAM)},
  author = {Champion,  Thierry and De Pascale,  Luigi and Juutinen,  Petri},
  year = {2008},
  month = Jan,
  pages = {1–20}
}

@article{XuYin13,
  title = {A Block Coordinate Descent Method for Regularized Multiconvex Optimization with Applications to Nonnegative Tensor Factorization and Completion},
  volume = {6},
  ISSN = {1936-4954},
  url = {http://dx.doi.org/10.1137/120887795},
  DOI = {10.1137/120887795},
  number = {3},
  journal = {SIAM Journal on Imaging Sciences},
  publisher = {Society for Industrial & Applied Mathematics (SIAM)},
  author = {Xu,  Yangyang and Yin,  Wotao},
  year = {2013},
  month = Jan,
  pages = {1758–1789}
}

@article{RaHoLu13,
  title = {A Unified Convergence Analysis of Block Successive Minimization Methods for Nonsmooth Optimization},
  volume = {23},
  ISSN = {1095-7189},
  url = {http://dx.doi.org/10.1137/120891009},
  DOI = {10.1137/120891009},
  number = {2},
  journal = {SIAM Journal on Optimization},
  publisher = {Society for Industrial & Applied Mathematics (SIAM)},
  author = {Razaviyayn,  Meisam and Hong,  Mingyi and Luo,  Zhi-Quan},
  year = {2013},
  month = Jan,
  pages = {1126–1153}
}

@inproceedings{AlBaCaPh25,
  author    = {Alouadi, Alexandre and Barreau, Baptiste
               and Carlier, Laurent and Pham, Huy{\^e}n},
  title     = {Robust Time Series Generation via
               {Schr\"odinger} Bridge: A Comprehensive Evaluation},
  booktitle = {Proceedings of the 6th ACM International
               Conference on AI in Finance},
  year      = {2025},
  pages     = {906--914},
  publisher = {ACM},
  doi       = {10.1145/3768292.3770391},
  url       = {https://arxiv.org/abs/2503.02943}
}

@misc{AlHeLoMaPhTo26,
  author        = {Alouadi, Alexandre and
                   Henry-Labord{\`e}re, Pierre and
                   Loeper, Gr{\'e}goire and
                   Mazhar, Othmane and
                   Pham, Huy{\^e}n and Touzi, Nizar},
  title         = {{LightSBB-M}: Bridging {Schr\"odinger}
                   and Bass for Generative Diffusion Modeling},
  year          = {2026},
  eprint        = {2601.19312},
  archivePrefix = {arXiv},
  primaryClass  = {cs.LG},
  url           = {https://arxiv.org/abs/2601.19312}
}

@misc{DePhZa26,
  author        = {De Marco, Stefano and
                   Pham, Huy{\^e}n and Zanni, Davide},
  title         = {{Schr\"odinger} Bridges with Jumps
                   for Time Series Generation},
  year          = {2026},
  eprint        = {2602.20011},
  archivePrefix = {arXiv},
  primaryClass  = {q-fin.MF},
  url           = {https://arxiv.org/abs/2602.20011}
}

@article{CaFeMe64,
  author  = {Cartier, P. and Fell, J. M. G. and Meyer, P.-A.},
  title   = {Comparaison des mesures port\'ees par un ensemble convexe compact},
  journal = {Bull. Soc. Math. France},
  volume  = {92},
  pages   = {435--445},
  year    = {1964}
}

@article{Ho98,
  author  = {Hobson, D. G.},
  title   = {Robust hedging of the lookback option},
  journal = {Finance Stoch.},
  volume  = {2},
  number  = {4},
  pages   = {329--347},
  year    = {1998}
}

@incollection{Ho11,
  author    = {Hobson, D.},
  title     = {The {S}korokhod embedding problem and model-independent bounds for option prices},
  booktitle = {Paris--{P}rinceton Lectures on Mathematical Finance 2010},
  series    = {Lecture Notes in Math.},
  volume    = {2003},
  pages     = {267--318},
  publisher = {Springer},
  year      = {2011}
}

@book{He17,
  author    = {Henry-Labord{\`e}re, P.},
  title     = {Model-free Hedging: A Martingale Optimal Transport Viewpoint},
  publisher = {Chapman and Hall/CRC},
  year      = {2017}
}

@article{BeHePe13,
  author  = {Beiglb{\"o}ck, M. and Henry-Labord{\`e}re, P. and Penkner, F.},
  title   = {Model-independent bounds for option prices -- a mass transport approach},
  journal = {Finance Stoch.},
  volume  = {17},
  number  = {3},
  pages   = {477--501},
  year    = {2013}
}

@article{GaHeTo14,
  author  = {Galichon, A. and Henry-Labord{\`e}re, P. and Touzi, N.},
  title   = {A stochastic control approach to no-arbitrage bounds given marginals, with an application to lookback options},
  journal = {Ann. Appl. Probab.},
  volume  = {24},
  number  = {1},
  pages   = {312--336},
  year    = {2014}
}

@article{DoSo14,
  author  = {Dolinsky, Y. and Soner, H. M.},
  title   = {Martingale optimal transport and robust hedging in continuous time},
  journal = {Probab. Theory Related Fields},
  volume  = {160},
  number  = {1-2},
  pages   = {391--427},
  year    = {2014}
}

@article{Ob04,
  author  = {Ob{\l}{\'o}j, J.},
  title   = {The {S}korokhod embedding problem and its offspring},
  journal = {Probab. Surv.},
  volume  = {1},
  pages   = {321--392},
  year    = {2004}
}

@article{BeCoHu17,
  author  = {Beiglb{\"o}ck, M. and Cox, A. M. G. and Huesmann, M.},
  title   = {Optimal transport and {S}korokhod embedding},
  journal = {Invent. Math.},
  volume  = {208},
  number  = {2},
  pages   = {327--400},
  year    = {2017}
}

@article{HeTo16,
  author  = {Henry-Labord{\`e}re, P. and Touzi, N.},
  title   = {An explicit martingale version of the one-dimensional {B}renier theorem},
  journal = {Finance Stoch.},
  volume  = {20},
  number  = {3},
  pages   = {635--668},
  year    = {2016}
}

@article{BeHeTo17,
  author  = {Beiglb{\"o}ck, M. and Henry-Labord{\`e}re, P. and Touzi, N.},
  title   = {Monotone martingale transport plans and {S}korokhod embedding},
  journal = {Stochastic Process. Appl.},
  volume  = {127},
  number  = {9},
  pages   = {3005--3013},
  year    = {2017}
}

@article{HoKl15,
  author  = {Hobson, D. and Klimmek, M.},
  title   = {Robust price bounds for the forward starting straddle},
  journal = {Finance Stoch.},
  volume  = {19},
  number  = {1},
  pages   = {189--214},
  year    = {2015}
}

@article{GhKiLi19,
  author  = {Ghoussoub, N. and Kim, Y.-H. and Lim, T.},
  title   = {Structure of optimal martingale transport plans in general dimensions},
  journal = {Ann. Probab.},
  volume  = {47},
  number  = {1},
  pages   = {109--164},
  year    = {2019}
}

@article{BeJu21,
  author  = {Beiglb{\"o}ck, M. and Juillet, N.},
  title   = {Shadow couplings},
  journal = {Trans. Amer. Math. Soc.},
  volume  = {374},
  number  = {7},
  pages   = {4973--5002},
  year    = {2021}
}

@article{BaLoOb25,
  author  = {Backhoff, J. and Loeper, G. and Ob{\l}{\'o}j, J.},
  title   = {Geometric martingale {B}enamou--{B}renier transport and geometric {B}ass martingales},
  journal = {Proc. Amer. Math. Soc.},
  volume  = {153},
  number  = {11},
  pages   = {4945--4960},
  year    = {2025}
}

@article{GuOb19,
  author  = {Guo, G. and Ob{\l}{\'o}j, J.},
  title   = {Computational methods for martingale optimal transport problems},
  journal = {Ann. Appl. Probab.},
  volume  = {29},
  number  = {6},
  pages   = {3311--3347},
  year    = {2019}
}

@article{AlCoJo19,
  author  = {Alfonsi, A. and Corbetta, J. and Jourdain, B.},
  title   = {Sampling of one-dimensional probability measures in the convex order and computation of robust option price bounds},
  journal = {Int. J. Theor. Appl. Finance},
  volume  = {22},
  number  = {3},
  pages   = {1950002},
  year    = {2019}
}

@article{EcGuLiOb21,
  author  = {Eckstein, S. and Guo, G. and Lim, T. and Ob{\l}{\'o}j, J.},
  title   = {Robust pricing and hedging of options on multiple assets and its numerics},
  journal = {SIAM J. Financial Math.},
  volume  = {12},
  number  = {1},
  pages   = {158--188},
  year    = {2021}
}

@article{BoTa13,
  author  = {Bonnans, J. F. and Tan, X.},
  title   = {A model-free no-arbitrage price bound for variance options},
  journal = {Appl. Math. Optim.},
  volume  = {68},
  pages   = {43--73},
  year    = {2013}
}

@article{TaTo13,
  author  = {Tan, X. and Touzi, N.},
  title   = {Optimal transportation under controlled stochastic dynamics},
  journal = {Ann. Probab.},
  volume  = {41},
  number  = {5},
  pages   = {3201--3240},
  year    = {2013}
}

@article{GuLo21,
  author  = {Guo, I. and Loeper, G.},
  title   = {Path dependent optimal transport and model calibration on exotic derivatives},
  journal = {Ann. Appl. Probab.},
  volume  = {31},
  number  = {3},
  pages   = {1232--1263},
  year    = {2021}
}

@article{GuLoWa22,
  author  = {Guo, I. and Loeper, G. and Wang, S.},
  title   = {Calibration of local-stochastic volatility models by optimal transport},
  journal = {Math. Finance},
  volume  = {32},
  number  = {1},
  pages   = {46--77},
  year    = {2022}
}

@misc{HJLOP,
  author = {Hasenbichler, M. and Joseph, B. and Loeper, G. and Ob{\l}{\'o}j, J. and Pammer, G.},
  title  = {The martingale {S}inkhorn algorithm},
  note   = {Preprint, arXiv:2310.13797},
  year   = {2023}
}

@article{BaPaSc25,
  author  = {Backhoff-Veraguas, J. and Pammer, G. and Schachermayer, W.},
  title   = {The {$L^2$} gradient flow of the {B}ass functional in martingale optimal transport},
  journal = {Math. Financ. Econ.},
  volume  = {19},
  number  = {4},
  pages   = {665--686},
  year    = {2025}
}

@article{GuJuTa26,
  author  = {Guo, G. and Juillet, N. and Tang, W.},
  title   = {The football model, stochastic ordering and martingale transport},
  journal = {Bernoulli},
  volume  = {32},
  number  = {3},
  pages   = {2323--2350},
  year    = {2026},
  doi     = {10.3150/25-BEJ1955},
  note    = {arXiv:2503.07145}
}

@misc{HuSoTu26,
  author = {Hua, D. and Song, A. and Tudose, S.},
  title  = {On {T}alagrand's convexity conjecture},
  note   = {Preprint, arXiv:2605.10908},
  year   = {2026}
}

@misc{De18,
  author = {De March, H.},
  title  = {Entropic approximation for multi-dimensional martingale optimal transport},
  note   = {Preprint, arXiv:1812.11104},
  year   = {2018}
}

@article{EcLa26,
  author  = {Eckstein, S. and Lakhal, A.},
  title   = {Exponential convergence of general iterative proportional fitting procedures},
  journal = {SIAM J. Optim.},
  volume  = {36},
  number  = {2},
  pages   = {912--937},
  year    = {2026}
}

\end{document}